\def\moverlay{\mathpalette\mov@rlay}
\def\mov@rlay#1#2{\leavevmode\vtop{%
\baselineskip\z@skip \lineskiplimit-\maxdimen
\ialign{\hfil$\m@th#1##$\hfil\cr#2\crcr}}}
\newcommand{\charfusion}[3][\mathord]{
#1{\ifx#1\mathop\vphantom{#2}\fi
     \mathpalette\mov@rlay{#2\cr#3}}
     \ifx#1\mathop\expandafter\displaylimits\fi}
\newcommand{\cupdot}{\charfusion[\mathbin]{\cup}{\cdot}}
\newcommand\abs[1]{\lvert#1\rvert}
\newcommand{\reals}{{\mathbb R}}
\newcommand{\DD}{\Delta}
\newcommand{\C}{\mathbf{c}}
\newcommand{\D}{\mathbf{d}}
\newcommand{\conv}{\mathrm{conv}}
\theoremstyle{plain}
\newtheorem{theorem}{Theorem}
\newtheorem{lemma}[theorem]{Lemma}
\newtheorem{corollary}[theorem]{Corollary}
\newtheorem{proposition}[theorem]{Proposition}
\theoremstyle{definition}
\newtheorem{definition}[theorem]{Definition}
\newtheorem{example}[theorem]{Example}
\theoremstyle{remark}
\begin{document}

\title{Facial structures of lattice path matroid polytopes}

\author{Suhyung An}
\address{Department of Mathematics\\
         Yonsei University\\
         Seoul 120-749, Republic of Korea}
\email{hera1973@yonsei.ac.kr}

\author{JiYoon Jung}
\address{Department of Mathematics\\
         Marshall University\\
         Huntington, WV 25755}
\email{jungj@marshall.edu}

\author{Sangwook Kim}
\address{Department of Mathematics\\
         Chonnam National University\\
         Gwangju 500-757, Republic of Korea}
\email{swkim.math@chonnam.ac.kr}

\begin{abstract}
   A lattice path matroid is a transversal matroid corresponding to 
   a pair of lattice paths on the plane.
   A matroid base polytope is the polytope whose vertices are the incidence vectors
   of the bases of the given matroid.
   In this paper, we study facial structures of matroid base polytopes corresponding to
   lattice path matroids.
\end{abstract}

\keywords{matroid base polytope, lattice path matroid}

\maketitle


\section{Introduction}
\label{sec-introduction}

   For a matroid $M$ on a ground set $[n]:= \{1, 2, \dots, n \}$ 
   with a set of bases $\mathcal{B}(M)$, 
   a \emph{matroid base polytope} $\mathcal{P}(M) := \mathcal{P}(\mathcal{B}(M))$ of $M$ is
   the polytope in $\reals^n$ whose vertices are the incidence vectors of the bases of $M$.
   The polytope $\mathcal{P}(M)$ is a face of a \emph{matroid independence polytope} 
   first studied by Edmonds~\cite{Edmonds},
   whose vertices are the incidence vectors of all the independent sets in $M$.
   There are a lot of research activities about matroid base polytopes for the last few years
   since it has various applications in 
   algebraic geometry, combinatorial optimization, Coxeter group theory, and tropical geometry.
   In general, matroid base polytopes are not well understood.
   
   A \emph{lattice path matroid} is a transversal matroid 
   corresponding to a pair of lattice paths having common end points.
   Many interesting and striking properties of these matroids have been studied.
   The combinatorial and structural properties of lattice path matroids are given 
   by Bonin et al. in \cite{BoninMierNoy} and \cite{BoninMier}.
   The $h$-vectors, Bergman complexes, and Tutte polynomials of lattice path matroids are studied
   by several authors~\cite{DelucchiDlugosch, MortonTurner, Schweig2010}.

   In this paper, we study the facial structure of a \emph{lattice path matroid polytope} 
   which is a matroid base polytope corresponding to a lattice path matroid.
   This class of matroid base polytopes is belong to important classes of polytopes 
   such as positroid polytopes and generalized permutohedra.
   Positroid polytopes are studied by Ardila et al.~\cite{ArdilaRinconWilliams}
   and generalized permutohedra are studied 
   by Postnikov and other authors~\cite{Oh, PostnikovReinerWilliams, Postnikov}.
   Bidkhori~\cite{Bidkhori} provides a description of facets of a lattice path matroid polytope
   and we extend it to all the faces.
   
   This paper is organized as follows.
   In Section~\ref{sec-lattice-path-matroids}, 
   definitions and properties of lattice path matroids are given.
   In Section~\ref{sec-matroid-base-polytopes}, 
   we define lattice path matroid polytopes and give known results about them.
   In Section~\ref{sec-border strips}, 
   lattice path matroid polytopes for the case of border strips are studied.
   In particular, 
   we show that all the faces of a lattice path matroid polytope in this case can be described by
   certain subsets of deletions, contractions and direct sums and 
   express them in terms of a lattice path obtained from the border strip.
   Section~\ref{sec-skew-shape} explains 
   facial structures of a lattice path matroid polytope for a general case 
   in terms of certain tilings of skew shapes inside the given region.

\section{Lattice path matroids}
\label{sec-lattice-path-matroids}

   In this section, we provide basic definitions and properties of lattice path matroids.

   A \emph{matroid $M$} is a pair $(E(M), \mathcal{B}(M))$ consisting of a finite set $E(M)$ and
   a collection $\mathcal{B}(M)$ of subsets of $E(M)$ that satisfy the following conditions:
   \begin{enumerate}
      \item
      $\mathcal{B}(M) \ne \emptyset$, and
      \item
      for each pair of distinct sets $B, B'$ in $\mathcal{B}(M)$ and 
      for each element $x \in B - B'$,
      there is an element $y \in B' - B$ such that 
      $(B - \{ x \}) \cup \{ y \} \in \mathcal{B}(M)$.
   \end{enumerate}

   The set $E(M)$ is called the \emph{ground set} of $M$ and 
   the sets in $\mathcal{B}(M)$ are called the \emph{bases} of $M$.
   Subsets of bases are called the \emph{independent sets} of $M$.
   It is easy to see that 
   all the bases of $M$ have the same cardinality, called the \emph{rank} of $M$.

   A set system $\mathcal{A} = \{ A_j : j \in J \}$ is a multiset of subsets of a finite set~$S$.
   A \emph{transversal} of $\mathcal{A}$ is 
   a set $\{ x_j : x_j \in A_j \text{ for all } j \in J \}$ of $\abs{J}$ distinct elements.
   A \emph{partial transversal} of $\mathcal{A}$ is 
   a transversal of a set system of the form $\{A_k : k \in K$ with $K \subseteq J \}$.

   Edmonds and Fulkerson \cite{EdmondsFulkerson} show the following fundamental result:

   \begin{theorem}
      The transversals of a set system $\mathcal{A} = \{ A_j : j \in J \}$ form 
      the bases of a matroid on $S$.
   \end{theorem}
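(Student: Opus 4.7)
The plan is to verify the two matroid axioms directly for the collection $\mathcal{B}$ of transversals of $\mathcal{A}$. Nonemptiness of $\mathcal{B}$ is assumed (otherwise there is nothing to prove); the main task is the basis exchange axiom. My approach is to translate the problem into bipartite matching. Let $G$ be the bipartite graph on vertex classes $S$ and $J$, with an edge $\{x,j\}$ whenever $x \in A_j$. A transversal $B$ of $\mathcal{A}$ is precisely the $S$-image of a matching $M_B$ of $G$ that saturates every vertex in $J$, and conversely any such matching yields a transversal.

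Given two transversals $B, B' \in \mathcal{B}$ and an element $x \in B - B'$, I would form the symmetric difference $H := M_B \triangle M_{B'}$ and analyze its connected components. Every vertex $j \in J$ is saturated by both $M_B$ and $M_{B'}$, so its degree in $H$ is either $0$ or $2$; every vertex in $S$ has degree at most $2$. Hence $H$ decomposes into alternating paths and even cycles, and the endpoints of the paths have odd degree in $H$, which forces them to lie in $S$ and in fact in $B \triangle B'$.

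Next, I would trace the component of $H$ containing $x$. Since $x \in B - B'$, the vertex $x$ is saturated by $M_B$ but not by $M_{B'}$, so it is an endpoint of a path $P$ in $H$ whose first edge belongs to $M_B$. Because the path alternates and terminates at a vertex of $S$ lying in $B \triangle B'$, its other endpoint $y$ must be unsaturated by $M_B$ and saturated by $M_{B'}$, so $y \in B' - B$. I then swap the two matchings along $P$: define
\[
   M := \bigl(M_B \setminus (M_B \cap P)\bigr) \cup (M_{B'} \cap P).
\]
The swap preserves the property of saturating every vertex in $J$ (internal $J$-vertices retain exactly one incident matching edge, while the endpoints of $P$ lie in $S$), so $M$ is again a matching saturating $J$. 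Its $S$-image is exactly $(B - \{x\}) \cup \{y\}$, which is therefore a transversal of $\mathcal{A}$.

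The main obstacle is controlling where the alternating path from $x$ can terminate. The key structural observation, which drives the whole argument, is that the $J$-vertices have even degree in $H$ because both $M_B$ and $M_{B'}$ saturate $J$; without this, the path could end in $J$ and no exchange element $y \in B' - B$ would be produced. Once this parity observation is in place, the swap along $P$ is routine and yields the desired basis exchange.
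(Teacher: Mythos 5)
Your argument is correct. Note that the paper itself offers no proof of this statement: it is quoted as a known theorem of Edmonds and Fulkerson, with a citation in place of an argument, so there is nothing internal to compare against. What you give is the standard proof via bipartite matchings: encoding transversals as matchings of the incidence graph that saturate $J$, taking the symmetric difference $M_B \triangle M_{B'}$, and using the parity observation that every $J$-vertex has even degree there to force the alternating path starting at $x$ to end at some $y \in B' - B$, after which the swap along the path produces the matching realizing $(B - \{x\}) \cup \{y\}$. All steps check out; in particular the path from $x$ has an even number of edges (both endpoints lie in $S$), so its last edge belongs to $M_{B'}$ and the terminal vertex, having degree one in the symmetric difference, is covered by $M_{B'}$ alone, i.e.\ lies in $B' - B$ as you claim. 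The only caveat worth recording is the one you already flag: if $\mathcal{A}$ admits no transversal the collection $\mathcal{B}$ is empty and axiom (1) fails, so the theorem must be read under the implicit hypothesis that at least one transversal exists (equivalently, one proves that the partial transversals are the independent sets of a matroid, of which the full transversals are then the bases).
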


   A \emph{transversal matroid} is a matroid whose bases are the transversals of some set system
   $\mathcal{A} = \{ A_j : j \in J \}$.
   The set system $\mathcal{A}$ is called the \emph{presentation} of the transversal matroid.
   The independent sets of a transversal matroid are the partial transversals of $\mathcal{A}$.

   We consider lattice paths in the plane using steps $E = (1,0)$ and $N = (0,1)$.
   The letters are abbreviations of East and North.
   We will often treat lattice paths as words in the alphabets $E$ and $N$, and 
   we will use the notation~$\alpha^n$
   to denote the concatenation of $n$ letters of $\alpha$.
   The \emph{length} of a lattice path $P = p_1 p_2 \cdots p_n$ is $n$, 
   the number of steps in $P$.

   \begin{definition}
      Let $P = p_1 p_2 \cdots p_{m+r}$ and $Q = q_1 q_2 \cdots q_{m+r}$ be two lattice paths 
      from $(0,0)$ to $(m, r)$ with $P$ never going above $Q$.
      Let $\{ p_{u_1}, p_{u_2}, \dots, p_{u_r} \}$ be the set of North steps of $P$, 
      with $u_1 < u_2 < \cdots < u_r$;
      similarly, let $\{ q_{l_1}, q_{l_2}, \dots, q_{l_r} \}$ be the set of North steps of $Q$
      with $l_1 < l_2 < \cdots < l_r$.
      Let $N_i$ be the interval $[l_i, u_i]$ of integers.
      Let $M(P,Q)$ be the transversal matroid 
      that has ground set $[m+r]$ and presentation $\{ N_i: i \in [r] \}$.
      The pair $(P,Q)$ is a \emph{lattice path presentation} of $M(P,Q)$.
      A \emph{lattice path matroid} is a matroid~$M$ 
      that is isomorphic to $M(P,Q)$ for some such pair of lattice paths $P$ and $Q$.
      We will sometimes call a lattice path presentation of $M$ simply a presentation of $M$
      when there is no danger of confusion and when doing so avoids awkward repetition.
    \end{definition}
    
    The fundamental connection between the transversal matroid $M(P,Q)$ and 
    the lattice paths that stay in the region bounded by $P$ and $Q$ is 
    the following theorem of Bonin et al.~\cite{BoninMierNoy}.

    \begin{theorem}
    \label{thm-bases-of-lattice-path-matroids}
       A subset $B$ of $[m+r]$ with $\abs{B} = r$ is a basis of $M(P,Q)$ 
       if and only if 
       the associated lattice path $P(B)$ stays in the region bounded by $P$ and $Q$, 
       where $P(B)$ is the path which has its north steps on the set $B$
       positions and east steps elsewhere.
    \end{theorem}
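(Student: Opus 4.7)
My plan is to reduce the condition ``$P(B)$ stays in the region bounded by $P$ and $Q$'' to a clean numerical condition on $B$, and then compare that condition with the transversal condition directly.

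First I would set up notation. Write $B = \{b_1 < b_2 < \cdots < b_r\}$, so that the $i$-th North step of the lattice path $P(B)$ sits at position $b_i$. Comparing $P(B)$ with $P$ and $Q$ step by step, the path $P(B)$ lies weakly below $Q$ exactly when its $i$-th North step occurs no earlier than $Q$'s (giving $b_i \geq l_i$), and $P(B)$ lies weakly above $P$ exactly when its $i$-th North step occurs no later than $P$'s (giving $b_i \leq u_i$). Hence $P(B)$ stays in the region bounded by $P$ and $Q$ if and only if $b_i \in N_i = [l_i, u_i]$ for every $i \in [r]$.

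With this reformulation, one direction is immediate: if $P(B)$ stays in the region, then the identity bijection $i \mapsto b_i$ is a system of distinct representatives for $\{N_1, \dots, N_r\}$, so $B$ is a transversal, hence a basis of $M(P,Q)$.

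The other direction is the interesting one. Suppose $B$ is a basis, so there exists some bijection $\sigma : [r] \to B$ with $\sigma(i) \in N_i$; I need to show the \emph{natural} ordering $i \mapsto b_i$ also works. Letting $\pi$ be the permutation defined by $\sigma(\pi(j)) = b_j$, we have $l_{\pi(j)} \leq b_j \leq u_{\pi(j)}$ for each $j$. To deduce $b_j \leq u_j$: since $\pi(1), \dots, \pi(j-1)$ are only $j-1$ values in $[r]$, at least one of $\pi(j), \pi(j+1), \dots, \pi(r)$ must lie in $\{1, \dots, j\}$; picking such an index $i \geq j$ with $\pi(i) \leq j$ gives $b_j \leq b_i \leq u_{\pi(i)} \leq u_j$, using the strict monotonicity $u_1 < u_2 < \cdots < u_r$ that comes from the fact that $P$ is an honest lattice path. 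The inequality $b_j \geq l_j$ follows by the mirror argument, applied to $\pi(1), \dots, \pi(j)$ and using $l_1 < l_2 < \cdots < l_r$. Combining the two inequalities gives $b_j \in N_j$ for every $j$, which by the first paragraph is exactly the statement that $P(B)$ stays in the region.

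The main obstacle is precisely this last combinatorial step: the existence of \emph{some} SDR does not a priori guarantee that the \emph{sorted} SDR works, and the argument relies essentially on the interval presentation with both endpoint sequences strictly increasing, a feature particular to the lattice path setting. Everything else is bookkeeping that translates geometry (a path lying between two other paths) into coordinates (positions of North steps).
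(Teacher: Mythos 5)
The paper does not prove this statement: it is quoted verbatim as a known result of Bonin, de Mier, and Noy (Theorem 3.3 of \cite{BoninMierNoy}), so there is no in-paper proof to compare against. Your argument is correct and is essentially the standard one from that source: the translation of ``$P(B)$ stays between $P$ and $Q$'' into $l_i \le b_i \le u_i$ is right, and the delicate converse step --- that the existence of some system of distinct representatives for the intervals $N_1,\dots,N_r$ forces the sorted assignment $i \mapsto b_i$ to be one as well --- is handled correctly by your counting argument using the strict monotonicity of the sequences $(u_i)$ and $(l_i)$.
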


\section{Lattice path matroid polytopes}
\label{sec-matroid-base-polytopes}

   Let $\mathcal{B}$ be a collection of $r$-element subsets of $[n]$.
   For each subset $B = \{ b_1, \dots, b_r \}$ of $[n]$, let
   $$
   e_B = e_{b_1} + \cdots + e_{b_r} \in \reals^n ,
   $$
   where $e_i$ is the $i$th standard basis vector of $\reals^n$.
   The collection $\mathcal{B}$ is represented by the convex hull of these points
   $$
   \mathcal{P}(\mathcal{B}) = \conv \{ e_B : B \in \mathcal{B} \}.
   $$
   This is a convex polytope of dimension $\le n-1$
   and is a subset of the $(n-1)$-simplex
   $$
   \DD_n = \{ (x_1, \dots, x_n) \in \reals^n : x_1 \ge 0,
         \dots, x_n \ge 0, x_1 + \cdots + x_n = r \}.
   $$
   Gel$'$fand, Goresky, MacPherson, and Serganova~\cite[Thm. 4.1]{GelfandGoreskyMacPhersonSerganova}
   show the following characterization of matroid base polytopes.
   \begin{theorem}
   \label{thm-matroid-polytopes}
      The subset $\mathcal{B}$ is the collection of bases of a matroid if and only if
      every edge of the polytope $\mathcal{P}(\mathcal{B})$ is parallel to
      a difference $e_\alpha - e_\beta$ of two distinct standard basis vectors.
   \end{theorem}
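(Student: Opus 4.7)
The plan is to prove the equivalence in two directions: for $(\Rightarrow)$ I would rely on the symmetric exchange property of matroid bases (for any two bases $B, B'$ and any $x \in B \setminus B'$ one can find $y \in B' \setminus B$ with both $(B \setminus \{x\}) \cup \{y\}$ and $(B' \setminus \{y\}) \cup \{x\}$ bases); for $(\Leftarrow)$ I would rely on the classical polytope-theoretic fact that the tangent cone at a vertex is generated as a convex cone by the incident edge directions.

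For $(\Rightarrow)$, suppose $\mathcal{B} = \mathcal{B}(M)$ and let $[e_B, e_{B'}]$ be an edge of $\mathcal{P}(\mathcal{B})$. Pick any $x \in B \setminus B'$ (nonempty since $|B| = |B'|$ and $B \neq B'$) and invoke symmetric exchange to obtain $y \in B' \setminus B$ such that $B_1 := (B \setminus \{x\}) \cup \{y\}$ and $B_2 := (B' \setminus \{y\}) \cup \{x\}$ both lie in $\mathcal{B}$. From $e_{B_1} + e_{B_2} = e_B + e_{B'}$, the midpoints of the two segments $[e_B, e_{B'}]$ and $[e_{B_1}, e_{B_2}]$ coincide, so both $e_{B_1}$ and $e_{B_2}$ must lie on the face $[e_B, e_{B'}]$. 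Any $0/1$-point on that segment is one of its endpoints, so $\{B_1, B_2\} = \{B, B'\}$; since $B_1 = B$ would force $y = x$, we must have $B_1 = B'$, i.e.\ $B \Delta B' = \{x, y\}$, giving $e_B - e_{B'} = e_x - e_y$.

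For $(\Leftarrow)$, fix $B, B' \in \mathcal{B}$ and $x \in B \setminus B'$; the task is to produce $y \in B' \setminus B$ with $(B \setminus \{x\}) \cup \{y\} \in \mathcal{B}$. Since $e_{B'} - e_B$ lies in the tangent cone of $\mathcal{P}(\mathcal{B})$ at $e_B$, I would write
$$e_{B'} - e_B = \sum_{k} \lambda_k \bigl(e_{\alpha_k} - e_{\beta_k}\bigr), \qquad \lambda_k > 0,$$
where each $e_{\alpha_k} - e_{\beta_k}$ is the direction of an edge incident to $e_B$. The hypothesis, together with the $0/1$-constraint on vertices, forces each such edge to connect $e_B$ with $e_{B_k}$ for $B_k := (B \setminus \{\beta_k\}) \cup \{\alpha_k\} \in \mathcal{B}$; in particular $\beta_k \in B$ and $\alpha_k \notin B$. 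Reading the $x$-coordinate of the displayed identity (using $\alpha_k \neq x$ since $\alpha_k \notin B$) yields $\sum_{\beta_k = x} \lambda_k = 1$, so some index $k_0$ satisfies $\beta_{k_0} = x$. Reading the $\alpha_{k_0}$-coordinate (where the $-e_{\beta_k}$ terms contribute nothing, as $\beta_k \in B$ while $\alpha_{k_0} \notin B$) forces $\alpha_{k_0} \in B'$, so $y := \alpha_{k_0}$ completes the exchange.

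The step I expect to be most delicate is the tangent-cone decomposition used in $(\Leftarrow)$: one must cite or briefly justify the standard polytope fact that, at a vertex $v$ of a polytope $P$, the cone $\{t(p - v) : t \geq 0,\ p \in P\}$ is generated as a convex cone by $\{u - v : [v,u] \text{ is an edge of } P\}$. Once this is in hand, the rigid structural constraint that every edge direction at $e_B$ removes an element of $B$ and adds one outside $B$ rules out coordinate cancellations and reduces the verification of the basis exchange axiom to the coordinate bookkeeping sketched above.
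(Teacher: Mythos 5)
The paper does not prove this statement at all---it is quoted verbatim from Gel\cprime fand--Goresky--MacPherson--Serganova \cite{GelfandGoreskyMacPhersonSerganova} as background---so there is no in-paper argument to compare against; judged on its own, your proof is correct and complete. Both directions check out: in $(\Rightarrow)$ the identity $e_{B_1}+e_{B_2}=e_B+e_{B'}$ places the common midpoint in the face $[e_B,e_{B'}]$, the extremality of faces pulls $e_{B_1},e_{B_2}$ into that face, and the $0/1$ observation correctly forces $B_1=B'$ (the case $B_1=B$ being excluded since $y\neq x$); in $(\Leftarrow)$ the coordinate bookkeeping is sound, the key points being that every edge at $e_B$ has the form $e_{B_k}-e_B=e_{\alpha_k}-e_{\beta_k}$ with $\alpha_k\notin B$, $\beta_k\in B$ (so no $\alpha_k$ can equal $x$ and no $\beta_k$ can equal $\alpha_{k_0}$, ruling out cancellation), which makes the two coordinate readings deliver $\beta_{k_0}=x$ and $\alpha_{k_0}\in B'$ as claimed. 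Two inputs deserve explicit citation or justification, as you anticipate: the tangent-cone fact for $(\Leftarrow)$, which is standard, and---worth flagging more strongly---the symmetric exchange property used in $(\Rightarrow)$, which is a genuine theorem of Brualdi strictly stronger than the exchange axiom as the paper states it; if you want to avoid that dependency, the classical GGMS-style alternative is to take a linear functional $w$ whose maximizing face is exactly the edge $[e_B,e_{B'}]$, choose $x\in B\setminus B'$ with $w_x$ minimal, and use ordinary exchange to manufacture a third $w$-maximizing vertex off the segment whenever $|B\triangle B'|>2$. Your route buys a shorter, more symmetric argument at the cost of a heavier black box; the functional argument is longer but uses only the axiom in the paper's Section~2.
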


   By the definition, the vertices of $\mathcal{P}(M)$ represent the bases of $M$.
   For two bases $B$ and $B'$ in $\mathcal{B}(M)$,
   $e_B$ and $e_{B'}$ are connected by an edge if and only if
   $e_B - e_{B'} = e_\alpha - e_\beta$.
   Since the latter condition is equivalent to $B - B' = \{ \alpha \}$
   and $B' - B = \{ \beta \}$, the edges of $\mathcal{P}(M)$ represent
   the basis exchange axiom.
   The basis exchange axiom gives the following equivalence relation
   on the ground set $[n]$ of the matroid $M$:
   $\alpha$ and $\beta$ are \emph{equivalent} 
   if there exist bases $B$ and $B'$ in $\mathcal{B}(M)$
   with $\alpha \in B$ and $B' = (B - \{ \alpha \} ) \cup \{ \beta \}$.
   The equivalence classes are called the \emph{connected components} of $M$.
   The matroid $M$ is called \emph{connected} if it has only one connected component.
   Feichtner and Sturmfels~\cite[Prop. 2.4]{FeichtnerSturmfels} express 
   the dimension of the matroid base polytope $\mathcal{P}(M)$ in terms of 
   the number of connected components of $M$.

   \begin{proposition}
   \label{dim-of-matroid-polytope}
      Let $M$ be a matroid on $[n]$.
      The dimension of the matroid base polytope $\mathcal{P}(M)$ equals $n - c(M)$, where
      $c(M)$ is the number of connected components of $M$.
   \end{proposition}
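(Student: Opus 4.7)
The plan is to compute $\dim \mathcal{P}(M)$ by sandwiching it between matching lower and upper bounds of $n - c(M)$. Fix a basis $B_0$ and set $V = \mathrm{span}\{e_B - e_{B_0} : B \in \mathcal{B}(M)\}$, so that $\dim \mathcal{P}(M) = \dim V$.

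For the upper bound I first show that, for each connected component $C_i$ of $M$, the quantity $\lvert B \cap C_i\rvert$ is independent of the choice of basis $B$. To see this, I iterate the basis exchange axiom: any basis $B$ can be transformed into any other basis $B'$ by a sequence of single-element swaps $B \mapsto (B - \{\alpha\}) \cup \{\beta\}$, each one strictly reducing $\lvert B \triangle B'\rvert$. In any such swap, $\alpha$ and $\beta$ are by definition equivalent and hence lie in the same connected component, so $\lvert B \cap C_i\rvert$ is preserved at every step. Writing $r_i$ for this common value, every vertex $e_B$ of $\mathcal{P}(M)$ satisfies the $c(M)$ equations $\sum_{j \in C_i} x_j = r_i$, and these are linearly independent because the $C_i$ partition $[n]$ into disjoint nonempty sets. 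Therefore $V$ is contained in a linear subspace of dimension $n - c(M)$.

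For the lower bound I use the edge description stated right after Theorem~\ref{thm-matroid-polytopes}: whenever $\alpha \sim \beta$ is witnessed by bases $B, B'$ with $B' = (B - \{\alpha\}) \cup \{\beta\}$, the difference $e_\alpha - e_\beta = e_B - e_{B'}$ lies in $V$. Transitivity of the connectivity relation then gives $e_\alpha - e_\beta \in V$ for every pair $\alpha, \beta$ in the same component. Within each component $C_i$ these differences span a subspace of dimension $\lvert C_i\rvert - 1$, namely the zero-sum hyperplane in the coordinates indexed by $C_i$; since distinct components use disjoint coordinates, the resulting subspaces are linearly independent. Summing over components yields $\dim V \geq \sum_i (\lvert C_i\rvert - 1) = n - c(M)$, which combines with the upper bound to produce the desired equality.

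The main obstacle I anticipate is the upper bound step: we need the constancy of $\lvert B \cap C_i\rvert$, and the cleanest argument for this requires knowing that two bases can be interpolated by a sequence of single-element exchanges whose swapped pairs each stay within a single component. Once that mild refinement of the basis exchange axiom is in hand, both bounds follow from elementary linear algebra.
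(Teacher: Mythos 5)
Your proof is correct. Note, however, that the paper does not prove this proposition at all: it is quoted from Feichtner and Sturmfels \cite{FeichtnerSturmfels}, so there is no in-paper argument to compare against. Your two-sided bound is the standard route and every step checks out. The ``mild refinement'' you worry about in the upper bound is fine as stated: given bases $B \ne B'$, the exchange axiom applied to any $x \in B - B'$ produces $y \in B' - B$ with $(B - \{x\}) \cup \{y\}$ a basis, which strictly decreases $\lvert B \triangle B' \rvert$ and, by the paper's definition of the equivalence relation, swaps two elements of the same component; induction on $\lvert B \triangle B' \rvert$ then gives the constancy of $\lvert B \cap C_i \rvert$. (Degenerate components consisting of a loop or a coloop are handled automatically, since there $\lvert C_i \rvert - 1 = 0$ and the corresponding coordinate is constantly $0$ or $1$.) The lower bound correctly uses that the component relation is the transitive closure of the directly witnessed exchanges, so $e_\alpha - e_\beta \in V$ for all $\alpha, \beta$ in one component, and the disjointness of supports gives the independence of the $c(M)$ zero-sum subspaces.
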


   Bonin et al.~\cite{BoninMierNoy} give the following result explaining 
   the number of connected components of the lattice path matroid.

   \begin{proposition}
      The lattice path matroid is connected if and only if 
      the bounding lattice paths $P$ and $Q$ meet only at $(0,0)$ and $(m,r)$.
   \end{proposition}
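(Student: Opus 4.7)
My plan is to prove both directions of the equivalence. The only-if direction I will handle by contrapositive, exhibiting an explicit direct-sum decomposition of $M(P,Q)$ whenever $P$ and $Q$ touch at an intermediate lattice point. The if direction requires producing enough basis exchanges to connect every pair of ground-set elements and constitutes the technical core.

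For the contrapositive, suppose $P$ and $Q$ both pass through a lattice point $(a,b)$ with $0 < a+b < m+r$. Because $P$ never goes above $Q$, the antidiagonal $x+y = a+b$ forces every lattice path squeezed between them to take the value $(a,b)$ at step $a+b$. By Theorem~\ref{thm-bases-of-lattice-path-matroids}, every basis $B$ of $M(P,Q)$ therefore satisfies $|B \cap [a+b]| = b$. Any basis exchange between $[a+b]$ and its complement would change this intersection size, so no element of $[a+b]$ is equivalent to any element outside it, and $M(P,Q)$ has at least two connected components.

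For the if direction, assume $P$ and $Q$ meet only at the two endpoints. Since the equivalence relation is transitive, it suffices to prove that $i$ and $i+1$ lie in a common connected component for every $1 \le i \le m+r-1$. Write $p_y^{(k)}$ and $q_y^{(k)}$ for the heights of $P$ and $Q$ on the antidiagonal $x+y = k$; the hypothesis yields $q_y^{(k)} - p_y^{(k)} \ge 1$ for $1 \le k \le m+r-1$. Pick an integer $b$ with $p_y^{(i)} \le b \le q_y^{(i)}-1$ and set $a = i-1-b$; using the monotonicity $p_y^{(k)} \le p_y^{(k+1)} \le p_y^{(k)}+1$ and its analogue for $q_y^{(k)}$, one checks that the four corner points $(a,b), (a,b+1), (a+1,b), (a+1,b+1)$ all lie in the closed region bounded by $P$ and $Q$. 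I will then build a lattice path $L$ in this region from $(0,0)$ to $(m,r)$ whose segment across positions $i, i+1$ is the NE corner $(a,b) \to (a,b+1) \to (a+1,b+1)$. Rerouting $L$ through the EN corner $(a,b) \to (a+1,b) \to (a+1,b+1)$ yields a second region-respecting path $L'$, whose corresponding bases $B, B'$ satisfy $B \setminus B' = \{i\}$ and $B' \setminus B = \{i+1\}$, exhibiting $i$ and $i+1$ as equivalent.

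The main obstacle is producing the full path $L$. I will invoke the standard observation that the lattice points in the closed region form a path-connected subset: for any two lattice points $v \le w$ coordinatewise in the region, a greedy rule (step east whenever the resulting point is still in the region and $w$ remains reachable, otherwise step north) produces a monotone lattice path from $v$ to $w$ inside the region. The hypothesis that $P$ and $Q$ coincide only at the endpoints guarantees that at every non-endpoint lattice point of the region at least one of east or north preserves membership, since a simultaneous failure of both would force $(x,y)$ to lie on both $P$ and $Q$. Applying this lemma from $(0,0)$ to $(a,b)$ and from $(a+1,b+1)$ to $(m,r)$ and splicing the results with the prescribed NE corner produces $L$.
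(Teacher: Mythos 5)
The paper does not prove this proposition; it is quoted as a known result of Bonin, de~Mier, and Noy (\cite{BoninMierNoy}), so there is no in-paper argument to compare yours against. On its own merits your proof is correct. The contrapositive direction is clean: an interior meeting point $(a,b)$ forces $|B\cap[a{+}b]|=b$ for every basis, which blocks any exchange across that cut. The forward direction is also sound: the antidiagonal bookkeeping $p_y^{(k)}\le p_y^{(k+1)}\le p_y^{(k)}+1$ does verify that all four corners of your unit square lie in the region, and the two reroutings give exactly the exchange $B'=(B-\{i\})\cup\{i+1\}$, so transitivity (which the paper asserts when it calls the exchange relation an equivalence relation) finishes it. The only place you should tighten the write-up is the connectivity lemma: your dichotomy (``if both east and north leave the region, the point lies on both $P$ and $Q$'') handles interior points of the rectangle $[0,a]\times[0,b]$, but when the greedy walk hits the line $x=a$ or $y=b$ you need the \emph{specific} remaining step to stay in the region, and ``$w$ remains reachable'' is left undefined. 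This is true and easy --- since $(a,b)$ is in the region, $q_x^{(k)}\le a$ and $p_y^{(k)}\le b$ for all $k\le a+b$, which is exactly what makes the forced north (resp.\ east) step legal; equivalently, the explicit path with height $\max\bigl(p_y^{(k)},\,k-a\bigr)$ on antidiagonal $k$ works --- but as written that step is asserted rather than proved.
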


   Remind that, 
   for a skew shape bounded by lattice paths $P$ and $Q$ (denoted by $[P,Q]$) and
   a related rank $r$ lattice path matroid $M[P,Q]$ with a set of bases $\mathcal{B}(M[P,Q])$, 
   the lattice path matroid polytope $\mathcal{P}(M[P,Q])$ is the convex hull 
   $\mathcal{P}(\mathcal{B}(M[P,Q])) 
   = \conv \{ e_B  = e_{b_1} + \cdots + e_{b_r} : B = \{ b_1, \dots, b_r \} \in~\mathcal{B} \}$ 
   where $e_i$ is the $i$th standard basis vector of $\reals^{m+r}$.
   The following result about the dimension of the lattice path matroid polytope 
   immediately follows from these results.

   \begin{corollary}
   \label{dimension-of-matroid-polytope}
      For lattice paths $P$ and $Q$ from $(0,0)$ to $(m, r)$ with $P$ never going above~$Q$, 
   the dimension of the lattice path matroid polytope $\mathcal{P}(M[P,Q])$ is $m+r-k+1$, 
   where $k$ is the number of intersection points of $P$ and $Q$.
   \end{corollary}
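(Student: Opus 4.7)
The plan is to combine the two results just stated. By Proposition~\ref{dim-of-matroid-polytope},
$\dim \mathcal{P}(M[P,Q]) = (m+r) - c(M[P,Q])$, so it suffices to prove that $c(M[P,Q]) = k-1$, where $k$ is the number of intersection points of $P$ and $Q$. The case $k=2$ is exactly the content of the preceding proposition on connectedness, so the task is to reduce the general case to that one by decomposing at the intersection points.

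List the intersection points in order along the paths as $(0,0) = (x_0, y_0), (x_1, y_1), \ldots, (x_{k-1}, y_{k-1}) = (m,r)$. Between consecutive intersection points $(x_{i-1}, y_{i-1})$ and $(x_i, y_i)$, the paths $P$ and $Q$ cut out a subregion bounded by subpaths $P^{(i)}, Q^{(i)}$ which share only their endpoints. Setting $m_i = x_i - x_{i-1}$ and $r_i = y_i - y_{i-1}$, each such subregion determines, after translation to the origin, a lattice path matroid $M_i := M(P^{(i)}, Q^{(i)})$ of rank $r_i$ on the ground set $\{x_{i-1} + y_{i-1} + 1, \ldots, x_i + y_i\} \subset [m+r]$.

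The key step is to verify that $M[P,Q] = M_1 \oplus M_2 \oplus \cdots \oplus M_{k-1}$. For this I would inspect the presentation $\{N_j : j \in [r]\}$ of $M[P,Q]$ defined in the excerpt: if the $j$-th north step of $P$ falls in the segment from $(x_{i-1}, y_{i-1})$ to $(x_i, y_i)$, then so does the $j$-th north step of $Q$ (because $P$ never goes above $Q$, and both paths pass through every intersection point in the same order). Consequently the interval $N_j = [l_j, u_j]$ is contained entirely within $\{x_{i-1} + y_{i-1} + 1, \ldots, x_i + y_i\}$. Thus the presentation of $M[P,Q]$ splits as a disjoint union of the presentations of the $M_i$'s on disjoint ground sets, which is precisely the direct-sum decomposition of transversal matroids. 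Alternatively, this can be checked at the level of bases using Theorem~\ref{thm-bases-of-lattice-path-matroids}: a lattice path from $(0,0)$ to $(m,r)$ stays in $[P,Q]$ if and only if it passes through every intersection point of $P$ and $Q$, so such a path is exactly a concatenation of paths, one inside each subregion.

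Once the direct sum decomposition is established, each $M_i$ is a lattice path matroid whose bounding paths meet only at their endpoints, hence is connected by the preceding proposition. Therefore $c(M[P,Q]) = \sum_{i=1}^{k-1} c(M_i) = k-1$, and the corollary follows from Proposition~\ref{dim-of-matroid-polytope}. The main obstacle is the careful justification of the direct-sum step; everything else is a direct application of the earlier results.
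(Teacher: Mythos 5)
Your proposal is correct and follows the same route the paper intends: the paper gives no explicit proof, merely asserting that the corollary ``immediately follows'' from Proposition~\ref{dim-of-matroid-polytope} and the connectedness criterion, and your argument supplies exactly the missing details (the direct-sum decomposition at the intersection points showing $c(M[P,Q]) = k-1$).
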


\section{Lattice path matroid polytopes for border strips}
\label{sec-border strips}

   For a matroid $(E(M), \mathcal{B}(M))$ and a subset $S$ of $E(M)$, 
   let $r(S)$ denote the \emph{rank} of $S$, the size of largest independent subsets of $S$.    
   Then, the \emph{restriction} $M|_S$ is the matroid on $S$ having the bases 
   $\mathcal{B} (M|_S) = \{ B \cap S : B \in \mathcal{B}(M) \text{ and } |B \cap S| = r(S)\}$, 
   and 
   the \emph{contraction} $M/S$ is the matroid on $E(M) - S$ having the bases 
   $\mathcal{B} (M/S) = \{ B - S : B \in \mathcal{B}(M) \text{ and } |B \cap~S| = r(S) \}$. 
   For two matroids $(E(M_1), \mathcal{B}(M_1))$ and $(E(M_2), \mathcal{B}(M_2))$, 
   their \emph{direct sum} $M_1 \oplus M_2$ is 
   the matroid on $E(M_1) \cupdot E(M_2)$ having the bases 
   $\mathcal{B}(M_1 \oplus~M_2)
   = \{B_1 \cup B_2 : B_1 \in \mathcal{B}(M_1), B_2 \in \mathcal{B}(M_2)\}$.

   \begin{theorem}[Bonin and de Mier~\cite{BoninMier}, 2006]
   \label{class-of-lattice-path-matroid}
      The class of lattice path matroids is closed 
      under restrictions, contractions, and direct sums.
   \end{theorem}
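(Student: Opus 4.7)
The plan is to verify closure under each of the three operations separately, reducing restriction and contraction by arbitrary subsets to single-element deletions and contractions by an obvious induction. For the direct sum, I would concatenate the bounding paths: given $M_i = M[P_i, Q_i]$ with paths from $(0,0)$ to $(m_i, r_i)$ on ground set $[m_i+r_i]$, set $P := P_1 P_2$ and $Q := Q_1 Q_2$, both running from $(0,0)$ to $(m_1+m_2,\, r_1+r_2)$ and both passing through the intermediate corner $(m_1, r_1)$. Any lattice path between $P$ and $Q$ must also pass through this corner, since both bounding paths coincide there, so by Theorem~\ref{thm-bases-of-lattice-path-matroids} its North-step set decomposes as a disjoint union of a basis of $M_1$ and a shifted basis of $M_2$. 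This matches the definition of $M_1 \oplus M_2$ on $E(M_1) \cupdot E(M_2)$, giving $M_1 \oplus M_2 = M[P_1 P_2,\, Q_1 Q_2]$.

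For single-element deletion at position $i \in [m+r]$, I split on the $i$-th steps of $P$ and $Q$. Because $P$ stays weakly below $Q$, exactly three cases arise: (a) $P_i = Q_i = N$, so $i$ is a coloop; (b) $P_i = Q_i = E$, so $i$ is a loop; (c) $P_i = E$ and $Q_i = N$. In case (a) every path in $[P,Q]$ must also have $N$ at position $i$, and in case (b) every path must have $E$ at position $i$; in either case, erasing the $i$-th step from both $P$ and $Q$ produces new bounding paths from $(0,0)$ to $(m, r-1)$ or $(m-1, r)$ whose lattice path matroid is exactly $M \setminus i = M/i$, by a direct appeal to Theorem~\ref{thm-bases-of-lattice-path-matroids}. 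Contraction in case (c) is then treated symmetrically to deletion in case (c), with the roles of $E$ and $N$ swapped.

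The main obstacle is case (c) of deletion. Here $M \setminus i$ consists of those bases of $M$ whose associated lattice path takes an East step at position $i$, so I need to produce new bounding paths $P', Q'$ from $(0,0)$ to $(m-1, r)$ such that lattice paths inside $[P', Q']$ are in bijection with these restricted bases. My strategy is to take $P'$ by deleting step $i$ from $P$ directly (since $P_i = E$ already), and to take $Q'$ by first locally rerouting $Q$, swapping its $N$-step at position $i$ with a suitable later $E$-step so as to replace $Q_i = N$ with $E$, and then deleting that position. The hard part will be checking that $P'$ remains weakly below $Q'$ after this surgery, that the rerouting step is always available, and that the resulting bijection between paths in $[P', Q']$ and bases of $M$ with East at step $i$ is both injective and surjective; boundary situations where $Q$ has no further $E$-step after position $i$, or where the shape degenerates into a narrow strip near $i$, will need to be handled separately. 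Once all three cases of both single-element deletion and single-element contraction are settled, iteration yields closure under arbitrary restriction and contraction, completing the proof.
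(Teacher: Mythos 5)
The paper does not prove this theorem; it imports it from Bonin and de Mier with a citation, so there is no in-paper argument to compare against. Judged on its own terms, your proposal has a genuine gap in the deletion/contraction part. The trichotomy you set up is wrong: $p_i = q_i = N$ does not make $i$ a coloop, and $p_i = q_i = E$ does not make $i$ a loop. Concretely, take $P = ENN$ and $Q = NNE$, so that $M[P,Q]$ is the uniform matroid $U_{2,3}$: here $p_2 = q_2 = N$, yet $\{1,3\}$ is a basis, i.e.\ the path $NEN$ lies in $[P,Q]$ and takes an East step at position $2$. Whether a path in $[P,Q]$ is forced to take an $N$ (resp.\ $E$) at position $i$ is governed by the North-step counts of $P$ and $Q$ near $i$ --- equivalently by the intervals $N_j=[l_j,u_j]$ of the presentation --- not by the individual letters $p_i$ and $q_i$. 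So your cases (a) and (b) do not reduce to the easy situation you describe, and essentially every position falls into your hard case (c).

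Moreover, the hard case is exactly where you stop. For deletion at a position with $p_i = E$ and $q_i = N$ you propose rerouting $Q$ by swapping its North step at $i$ with a later East step, but you do not verify that this surgery is always available, that $P'$ stays weakly below $Q'$, or that the paths in $[P',Q']$ biject with the bases of $M$ avoiding $i$; you explicitly defer all of this. That verification is the entire content of the closure-under-minors statement (Bonin and de Mier's proof constructs the new bounding paths explicitly and uses duality, which exchanges deletion with contraction, to handle the two operations together). By contrast, the direct-sum part of your proposal is correct and complete: concatenating the bounding paths forces every admissible path through the corner $(m_1,r_1)$, so the bases split as required. As written, however, closure under restriction and contraction is not established.
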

   
   The connection between constructions of lattice path matroids and 
   skew shapes bounded by lattice paths $P$ and $Q$ is known as follows. 
   First, we label a step of skew shape $[P,Q]$ by $i+j+1$ if the step begins at $(i, j)$. 
   Then, a restriction and a contraction of lattice path matroid $M[P,Q]$ correspond to 
   the deletion of the corresponding region in skew shape $[P,Q]$.
   If the begin point of one skew shape is attached to the end point of the other, and 
   all the steps are relabeled, we get a direct sum of two matroids.          
   Figure~\ref{R-C-D-LPM} shows 
   how a restriction, a contraction, and a direct sum work to $[P,Q]$.

   \begin{figure}[h]
      \begin{center}
         \begin{tikzpicture}[scale=.45]
         
                   \draw (0,0)--(1,0)--(1,1)--(0,1)--cycle;
                   \draw (1,0)--(2,0)--(2,1)--(1,1)--cycle;
                   \draw (1,1)--(2,1)--(2,2)--(1,2)--cycle;
                   \draw (0.3,0.5) node{\scriptsize{$1$}};
                   \draw (1.3,0.5) node{\scriptsize{$2$}};
                   \draw (2.3,0.5) node{\scriptsize{$3$}};
                   \draw (1.3,1.5) node{\scriptsize{$3$}};
                   \draw (2.3,1.5) node{\scriptsize{$4$}};
                   
                   \draw (1,-1.3) node{\footnotesize{[P,Q]}};
                   
                   \filldraw[orange, fill=orange]                    (5,1)--(5,1.3)--(6,1.3)--(6,1)--(7,0)--(7,-0.3)--(6,-0.3)--(6,0)--cycle;
                   \draw (5,0)--(6,0)--(6,1)--(5,1)--cycle;
                   \draw (6,0)--(7,0)--(7,1)--(6,1)--cycle;
                   \draw (6,1)--(7,1)--(7,2)--(6,2)--cycle;
                                      
                   \draw[ultra thick,orange] (9,1)--(10,0);
                   \draw (9,0)--(10,0)--(10,1)--(9,1)--cycle;
                   \draw (9,1)--(10,1)--(10,2)--(9,2)--cycle;
                   \draw (9.3,0.5) node{\scriptsize{$1$}};
                   \draw (9.3,1.5) node{\scriptsize{$3$}};
                   \draw (10.3,0.5) node{\scriptsize{$3$}};
                   \draw (10.3,1.5) node{\scriptsize{$4$}};
                                      
                   \draw (8,1) node{$\rightarrow$};
                   \draw (8,-1.3) node{\footnotesize{Restriction}};
                                      
                   \filldraw[orange, fill=orange] (14,1)--(13.7,1)--(13.7,2)--(14,2)--(15,1)--(15.3,1)--(15.3,0)--(15,0)--cycle;
                   \draw (13,0)--(14,0)--(14,1)--(13,1)--cycle;
                   \draw (14,0)--(15,0)--(15,1)--(14,1)--cycle;
                   \draw (14,1)--(15,1)--(15,2)--(14,2)--cycle;
                   
                   \draw[ultra thick,orange] (18,1)--(19,0);
                   \draw (17,0)--(18,0)--(18,1)--(17,1)--cycle;
                   \draw (18,0)--(19,0)--(19,1)--(18,1)--cycle;
                   \draw (17.3,0.5) node{\scriptsize{$1$}};
                   \draw (18.3,0.5) node{\scriptsize{$2$}};
                   \draw (19.3,0.5) node{\scriptsize{$4$}};
                   
                   \draw (16,1) node{$\rightarrow$};
                   \draw (16,-1.3) node{\footnotesize{Contraction}};                                      
                   
                   \draw[orange, fill=orange] (24,2) circle (1ex);
                   \draw (22,0)--(23,0)--(23,1)--(22,1)--cycle;
                   \draw (23,0)--(24,0)--(24,1)--(23,1)--cycle;
                   \draw (23,1)--(24,1)--(24,2)--(23,2)--cycle;
                   
                   \draw[orange, fill=orange] (24,2.5) circle (1ex);
                   \draw (24,2.5)--(25,2.5)--(25,3.5)--(24,3.5)--cycle;
                   \draw (24.3,3) node{\scriptsize{$1$}};
                   \draw (25.3,3) node{\scriptsize{$2$}};
                   
                   \draw[orange, fill=orange] (28,2) circle (1ex);  
                   \draw (26,0)--(27,0)--(27,1)--(26,1)--cycle;
                   \draw (27,0)--(28,0)--(28,1)--(27,1)--cycle;
                   \draw (27,1)--(28,1)--(28,2)--(27,2)--cycle;
                   \draw (26.3,0.5) node{\scriptsize{$1$}};
                   \draw (27.3,0.5) node{\scriptsize{$2$}};
                   \draw (28.3,0.5) node{\scriptsize{$3$}};
                   \draw (27.3,1.5) node{\scriptsize{$3$}};
                   \draw (28.3,1.5) node{\scriptsize{$4$}};                      
                   
                   \draw (28,2)--(29,2)--(29,3)--(28,3)--cycle;        
                   \draw (28.3,2.5) node{\scriptsize{$5$}};
                   \draw (29.3,2.5) node{\scriptsize{$6$}};
                   
                   \draw (25,1) node{$\rightarrow$};
                   \draw (25,-1.3) node{\footnotesize{Direct Sum}};
                   
         \end{tikzpicture}
      \end{center}
   \vspace{-3mm}   
   \caption{Restriction, contraction, and direct sum of matroids on $[P,Q]$.}   
   \label{R-C-D-LPM}
   \end{figure}
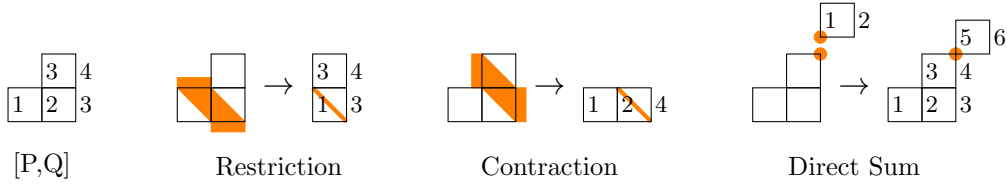  
   
   Our main question is about lattice path matroid polytopes 
   regarding to restrictions, contractions, and direct sums of matroids. 
   Can we use constructions of matroids 
   to figure out the properties of lattice path matroid polytopes? 
   To study facets and describe facial structures of lattice path matroid polytopes, 
   we introduce more specific definitions and notations. 
    
   The \emph{$i$-deletion} of $M[P,Q]$ is defined by a matroid $M|_{E(M)-\{i\}}$, 
   the restriction of $M[P,Q]$ on $E(M)-\{i\}$. 
   The \emph{$i$-contraction} of $M[P,Q]$ is a matroid $M/{\{i\}} \oplus \{i\}$ 
   which is isomorphic to $M/{\{i\}}$, the contraction of $M[P,Q]$ on~\{i\}. 
   An \emph{outside corner} $(p,q)$ of the region $[P,Q]$ is 
   a point on a corner $NE$ on~$P$ or a corner $EN$ on $Q$. 
   At the outside corner $(p,q)$, 
   let \emph{$(p,q)$-direct sum} of $M[P,Q]$ be a matroid $M_1 \oplus M_2$ 
   where a matroid $M_1$ and $M_2$ correspond to 
   the lower left quadrant and the upper right quadrant of $[P,Q]$ 
   with the center $(p,q)$, respectively. 
   If $(p,q)$ is the unique outside corner of $[P,Q]$ such that $p+q = i$ for some integer $i$, 
   then $(p,q)$-direct sum is abbreviated to \emph{$i$-direct sum}. 
   Figure~\ref{i-R-C-D-LPM} shows 
   how a deletion, a contraction, and a direct sum work to $[P,Q]$. 
    
   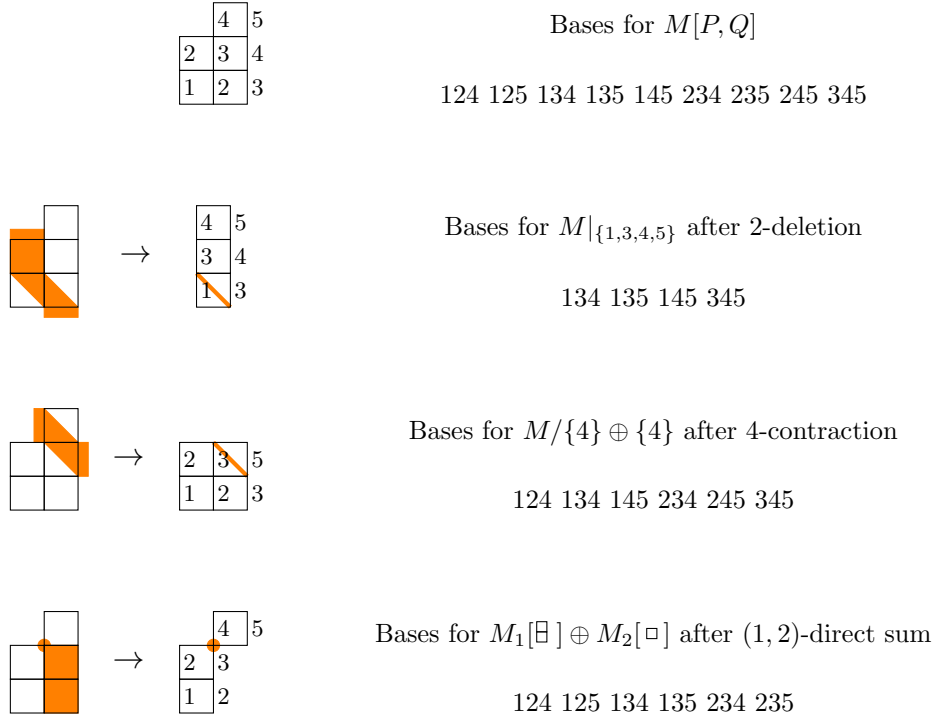
\begin{figure}[h]
      \begin{center}
         \begin{tikzpicture}[scale=.45]
         
                   \draw (0,18)--(1,18)--(1,19)--(0,19)--cycle;
                   \draw (1,18)--(2,18)--(2,19)--(1,19)--cycle;
                   \draw (1,19)--(2,19)--(2,20)--(1,20)--cycle;
		   \draw (0,19)--(1,19)--(1,20)--(0,20)--cycle;
                   \draw (1,20)--(2,20)--(2,21)--(1,21)--cycle;
                   \draw (0.3,18.5) node{\scriptsize{$1$}};
                   \draw (1.3,18.5) node{\scriptsize{$2$}};
                   \draw (2.3,18.5) node{\scriptsize{$3$}};
                   \draw (0.3,19.5) node{\scriptsize{$2$}};
                   \draw (1.3,19.5) node{\scriptsize{$3$}};
                   \draw (2.3,19.5) node{\scriptsize{$4$}};  
                   \draw (1.3,20.5) node{\scriptsize{$4$}};
                   \draw (2.3,20.5) node{\scriptsize{$5$}};    
                                                         
                   \filldraw[orange, fill=orange] (-5,13)--(-5,14.3)--(-4,14.3)--(-4,14)--(-4,13)--(-3,12)--(-3,11.7)--(-4,11.7)--(-4,12)--cycle;                  
                   \draw (-5,12)--(-4,12)--(-4,13)--(-5,13)--cycle;
                   \draw (-4,12)--(-3,12)--(-3,13)--(-4,13)--cycle;
                   \draw (-4,13)--(-3,13)--(-3,14)--(-4,14)--cycle;
		   \draw (-5,13)--(-4,13)--(-4,14)--(-5,14)--cycle;
                   \draw (-4,14)--(-3,14)--(-3,15)--(-4,15)--cycle;
                   \draw (-1.3,13.5) node{$\rightarrow$};   
                   
                   \draw[ultra thick,orange] (0.5,13)--(1.5,12);
                   \draw (0.5,12)--(1.5,12)--(1.5,13)--(0.5,13)--cycle;
                   \draw (0.5,13)--(1.5,13)--(1.5,14)--(0.5,14)--cycle;
                   \draw (0.5,14)--(1.5,14)--(1.5,15)--(0.5,15)--cycle;
                   \draw (0.8,12.5) node{\scriptsize{$1$}};
                   \draw (1.8,12.5) node{\scriptsize{$3$}};
                   \draw (0.8,13.5) node{\scriptsize{$3$}};
                   \draw (1.8,13.5) node{\scriptsize{$4$}};
                   \draw (0.8,14.5) node{\scriptsize{$4$}};
                   \draw (1.8,14.5) node{\scriptsize{$5$}};                   
                                      
                   \filldraw[orange, fill=orange] (-4,8)--(-4.3,8)--(-4.3,9)--(-4,9)--(-3,8)--(-2.7,8)--(-2.7,7)--(-3,7)--cycle;
                   \draw (-5,6)--(-4,6)--(-4,7)--(-5,7)--cycle;
                   \draw (-4,6)--(-3,6)--(-3,7)--(-4,7)--cycle;
                   \draw (-4,7)--(-3,7)--(-3,8)--(-4,8)--cycle;
		   \draw (-5,7)--(-4,7)--(-4,8)--(-5,8)--cycle;
                   \draw (-4,8)--(-3,8)--(-3,9)--(-4,9)--cycle;                   
                   \draw (-1.5,7.5) node{$\rightarrow$};
                   
                   \draw[ultra thick,orange] (1,8)--(2,7);
                   \draw (0,6)--(1,6)--(1,7)--(0,7)--cycle;
                   \draw (1,6)--(2,6)--(2,7)--(1,7)--cycle;
                   \draw (1,7)--(2,7)--(2,8)--(1,8)--cycle;
		   \draw (0,7)--(1,7)--(1,8)--(0,8)--cycle;
                   \draw (0.3,6.5) node{\scriptsize{$1$}};
                   \draw (1.3,6.5) node{\scriptsize{$2$}};
                   \draw (2.3,6.5) node{\scriptsize{$3$}};
                   \draw (0.3,7.5) node{\scriptsize{$2$}};
                   \draw (1.3,7.5) node{\scriptsize{$3$}};
                   \draw (2.3,7.5) node{\scriptsize{$5$}};  
                                      
                   \filldraw[orange, fill=orange] (-4,0) rectangle (-3,2);
                   \draw[orange, fill=orange] (-4,2) circle (-1ex);  
                   \draw (-5,0)--(-4,0)--(-4,1)--(-5,1)--cycle;
                   \draw (-4,0)--(-3,0)--(-3,1)--(-4,1)--cycle;
                   \draw (-4,1)--(-3,1)--(-3,2)--(-4,2)--cycle;
		   \draw (-5,1)--(-4,1)--(-4,2)--(-5,2)--cycle;
                   \draw (-4,2)--(-3,2)--(-3,3)--(-4,3)--cycle;                 
                   \draw (-1.5,1.5) node{$\rightarrow$};                                 
                   
                   \draw[orange, fill=orange] (1,2) circle (1ex);
                   \draw (0,0)--(1,0)--(1,1)--(0,1)--cycle;
                   \draw (0,1)--(1,1)--(1,2)--(0,2)--cycle;
                   \draw (1,2)--(2,2)--(2,3)--(1,3)--cycle;
                   \draw (0.3,0.5) node{\scriptsize{$1$}};
                   \draw (0.3,1.5) node{\scriptsize{$2$}};
                   \draw (1.3,0.5) node{\scriptsize{$2$}};
                   \draw (1.3,1.5) node{\scriptsize{$3$}};
                   \draw (1.3,2.5) node{\scriptsize{$4$}};
                   \draw (2.3,2.5) node{\scriptsize{$5$}};
                                       
                   \draw (14,18.3) node{\footnotesize{$124~125~134~135~145~234~235~245~345$}};
                   \draw (14,20.3) node{\footnotesize{Bases for $M[P,Q]$}};
                   
                   \draw (14,12.3) node{\footnotesize{$134~135~145~345$}};
                   \draw (14,14.3) node{\footnotesize{Bases for $M|_{\{1,3,4,5\}}$ after $2$-deletion}};
                
                   \draw (14,6.3) node{\footnotesize{$124~134~145~234~245~345$}};
                   \draw (14,8.3) node{\footnotesize{Bases for $M/\{4\} \oplus \{4\}$ after $4$-contraction}};
                                
                   \draw (14,0.3) node{\footnotesize{$124~125~134~135~234~235$}};
                   \draw (10.55,2.4)--(10.85,2.4)--(10.85,2.7)--(10.55,2.7)--cycle;
                   \draw (10.55,2.1)--(10.85,2.1)--(10.85,2.4)--(10.55,2.4)--cycle;
                   \draw (13.8,2.2)--(14.1,2.2)--(14.1,2.5)--(13.8,2.5)--cycle;
                   \draw (14,2.3) node{\footnotesize{Bases for $M_1[\hspace{3mm}] \oplus M_2[\hspace{3mm}]$ after $(1,2)$-direct sum}};
                                               
         \end{tikzpicture}
      \end{center}
   \caption{$2$-deletion, $4$-contraction, and $(1,2)$-direct sum of matroids on $[P,Q]$ and their bases.}
   \label{i-R-C-D-LPM}
   \end{figure}      
    
   \begin{example}
   \label{facets-border-strip}
   The polytopes in Figure~\ref{i-R-C-D-facets} are facets of the polytope $\mathcal{P}(M[P,Q])$ corresponding to 
   $2$-deletion, $4$-contraction, and $3$-direct sum of the matroid $M[P,Q]$ in Figure~\ref{i-R-C-D-LPM}.
   \end{example}
   
    \begin{figure}[h]
       \begin{center}
          \begin{tikzpicture}[scale=.45]         
                                                 
                   \draw[ultra thick,orange] (0,1)--(1,0);
                   \draw (0,0)--(1,0)--(1,1)--(0,1)--cycle;
                   \draw (0,1)--(1,1)--(1,2)--(0,2)--cycle;
                   \draw (0,2)--(1,2)--(1,3)--(0,3)--cycle;
                   \draw (0.3,0.5) node{\scriptsize{$1$}};
                   \draw (1.3,0.5) node{\scriptsize{$3$}};
                   \draw (0.3,1.5) node{\scriptsize{$3$}};
                   \draw (1.3,1.5) node{\scriptsize{$4$}};
                   \draw (0.3,2.5) node{\scriptsize{$4$}};
                   \draw (1.3,2.5) node{\scriptsize{$5$}};
                   \draw (2.5,1.5) node{$\rightarrow$};
                   
                   \draw (5.5,3)--(4,0.7)--(5.5,0)--(7,0.7)--cycle;
                   \draw[dashed] (4,0.7)--(7,0.7);
                   \draw (5.5,3)--(5.5,0);  
                   \draw (5.5,3.3) node{\scriptsize{$134$}};
                   \draw (7.5,0.7) node{\scriptsize{$135$}};
                   \draw (3.4,0.7) node{\scriptsize{$145$}};
                   \draw (5.5,-0.5) node{\scriptsize{$345$}};                                    
                                      
                   \draw[ultra thick,orange] (11,2.5)--(12,1.5);
                   \draw (10,0.5)--(11,0.5)--(11,1.5)--(10,1.5)--cycle;
                   \draw (11,0.5)--(12,0.5)--(12,1.5)--(11,1.5)--cycle;
                   \draw (11,1.5)--(12,1.5)--(12,2.5)--(11,2.5)--cycle;
		   \draw (10,1.5)--(11,1.5)--(11,2.5)--(10,2.5)--cycle;
                   \draw (10.3,1) node{\scriptsize{$1$}};
                   \draw (11.3,1) node{\scriptsize{$2$}};
                   \draw (12.3,1) node{\scriptsize{$3$}};
                   \draw (10.3,2) node{\scriptsize{$2$}};
                   \draw (11.3,2) node{\scriptsize{$3$}};
                   \draw (12.3,2) node{\scriptsize{$5$}};
                   \draw (13.5,1.5) node{$\rightarrow$}; 
                   
                   \draw (16.5,3.3)--(15,1.5)--(16.5,0)--(18,1.8)--(16.5,3.3);
                   \draw (15,1.5)--(17,1.5)--(18,1.8);
                   \draw[dashed] (18,1.8)--(16,1.8)--(15,1.5);                    
                   \draw (16.5,3.3)--(17,1.5)--(16.5,0);
                   \draw[dashed] (16.5,3.3)--(16,1.8)--(16.5,0);
                   \draw (16.5,3.6) node{\scriptsize{$124$}};
                   \draw (15.5,2.2) node{\scriptsize{$245$}};
                   \draw (18.4,2.2) node{\scriptsize{$145$}};
                   \draw (14.6,1) node{\scriptsize{$234$}};
                   \draw (17.5,1) node{\scriptsize{$134$}};
                   \draw (16.5,-0.5) node{\scriptsize{$345$}};
                   \hspace{3mm}
                                      
                   \draw[orange, fill=orange] (21,2) circle (1ex);
                   \draw (20,0)--(21,0)--(21,1)--(20,1)--cycle;
                   \draw (20,1)--(21,1)--(21,2)--(20,2)--cycle;
                   \draw (21,2)--(22,2)--(22,3)--(21,3)--cycle;
                   \draw (20.3,0.5) node{\scriptsize{$1$}};
                   \draw (20.3,1.5) node{\scriptsize{$2$}};
                   \draw (21.3,0.5) node{\scriptsize{$2$}};
                   \draw (21.3,1.5) node{\scriptsize{$3$}};
                   \draw (21.3,2.5) node{\scriptsize{$4$}};
                   \draw (22.3,2.5) node{\scriptsize{$5$}};
                   \draw (23,1.5) node{$\rightarrow$};
                   
                   \draw (24.5,3)--(27,3)--(25.75,2.6)--(24.5,3);   
                   \draw (24.5,3)--(24.5,0.3)--(25.75,-0.1)--(27,0.3)--(27,3);
                   \draw (25.75,2.6)--(25.75,-0.1);
                   \draw[dashed] (24.5,0.3)--(27,0.3);
                   \draw (24,3.4) node{\scriptsize{$124$}};
                   \draw (24,-0.1) node{\scriptsize{$125$}};
                   \draw (27.5,3.4) node{\scriptsize{$134$}};
                   \draw (27.5,-0.1) node{\scriptsize{$135$}};
                   \draw (25.7,3) node{\scriptsize{$234$}};
                   \draw (25.7,-0.5) node{\scriptsize{$235$}};                     
                                                         
         \end{tikzpicture}
      \end{center}     
      \caption{$2$-deletion, $4$-contraction, and $3$-direct sum of matroids on $[P,Q]$ and corresponding facets.}
      \label{i-R-C-D-facets}
   \end{figure}
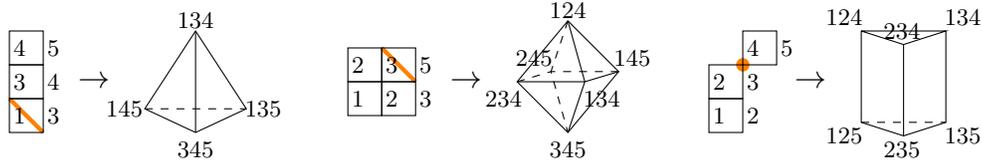
       
   In this section we focus on the properties of lattice path matroid polytopes corresponding to \emph{border strips}, 
   connected (non-empty) skew shapes with no $2 \times2 $ square. 
   Let $P = p_1 p_2 \cdots p_{m+r}$ and $Q = q_1 q_2 \cdots q_{m+r}$ be two lattice paths 
   from $(0,0)$ to $(m, r)$ with $P$ never going above $Q$. 
   Note that the region bounded by $P$ and $Q$ is a border strip if and only if 
   $p_i=q_i$ for $1 < i < m+r$, $p_1=q_{m+r}=\text{East step}$, and $q_1=p_{m+r}=\text{North step}$. 
   For such $P$ and $Q$ where $m+r > 2$, we define a lattice path $R = R(P,Q) = r_1 r_2 \cdots r_{m+r}$ as 
   $r_i = p_i (= q_i)$ for $1 < i < m+r$, $r_1 = r_2$, and $r_{m+r-1} = r_{m+r}$. 
   If $m+r = 2$, that means $P$ and $Q$ are lattice paths from $(0,0)$ to $(1,1)$, 
   a lattice path $R$ is defined as two consecutive North steps from $(0,0)$ to $(0,2)$. 
   Not only in the case $m+r = 2$, but in general $R$ is not needed to be a path from $(0,0)$ to $(m, r)$. 
   See Figure~\ref{P-Q-R}.
    
   \begin{figure}[h]
      \begin{center}
         \begin{tikzpicture}[scale=.45]
         
                   \draw[orange, pattern=north west lines, pattern color=orange] (0,0)--(1,0)--(0,1)--cycle;
                   \draw[orange, pattern=north west lines, pattern color=orange] (5,7)--(4,7)--(5,6)--cycle;
                   \filldraw[orange, fill=orange] (0,3)--(1,3)--(2,2)--(1,2)--cycle;
                   \filldraw[orange, fill=orange] (4,6)--(4,5)--(5,4)--(5,5)--cycle;
                   \draw (0,0)--(1,0)--(1,1)--(0,1)--cycle;
                   \draw (0,1)--(1,1)--(1,2)--(0,2)--cycle;
                   \draw (0,2)--(1,2)--(1,3)--(0,3)--cycle;
		   \draw (1,2)--(2,2)--(2,3)--(1,3)--cycle;
                   \draw (2,2)--(3,2)--(3,3)--(2,3)--cycle;
                   \draw (3,2)--(4,2)--(4,3)--(3,3)--cycle;
                   \draw (3,3)--(4,3)--(4,4)--(3,4)--cycle;
                   \draw (4,3)--(5,3)--(5,4)--(4,4)--cycle;
		   \draw (4,4)--(5,4)--(5,5)--(4,5)--cycle;
                   \draw (4,5)--(5,5)--(5,6)--(4,6)--cycle;
                   \draw (4,6)--(5,6)--(5,7)--(4,7)--cycle;
                   \draw (1.45,-0.7) node{\footnotesize{$E = p_1$}};
                   \draw (-1.5,0.33) node{\footnotesize{$q_1 = N$}};
                   \draw (-1.3,3.4) node{\footnotesize{$p_4 = q_4 = E$}};
                   \draw (7.6,4.5) node{\footnotesize{$N = p_{10} = q_{10}$}};
                   \draw (3.5,7.4) node{\footnotesize{$q_{12} = E$}};
                   \draw (6.6,6.5) node{\footnotesize{$N = p_{12}$}};                                   
                   \draw (2.5,-2.3) node{\footnotesize{$[P,Q]$}};
                                                         
                   \draw (13,0)--(13,3)--(16,3)--(16,4)--(17,4)--(17,8);
                   \draw[ultra thick,orange] (13,3)--(14,3);
                   \draw[ultra thick,orange] (17,5)--(17,6);
                   \fill (13,0) circle(4pt);
                   \fill (13,1) circle(4pt);
                   \fill (13,2) circle(4pt);
                   \fill (13,3) circle(4pt);
                   \fill (14,3) circle(4pt);
                   \fill (15,3) circle(4pt);
                   \fill (16,3) circle(4pt);
                   \fill (16,4) circle(4pt);
                   \fill (17,4) circle(4pt);
                   \fill (17,5) circle(4pt);
                   \fill (17,6) circle(4pt);
                   \fill (17,7) circle(4pt);
                   \fill (17,8) circle(4pt);
                   \draw (12.5,3.5) node{\footnotesize{$r_4 = E$}};
                   \draw (19.9,7.4) node{\footnotesize{$N = r_{12} (= r_{11})$}};
                   \draw (15.6,0.4) node{\footnotesize{$N = r_1 (= r_2)$}};
                   \draw (18.7,5.4) node{\footnotesize{$N = r_{10}$}};
                   \draw (15.5,-2.3) node{\footnotesize{$R(P,Q)$}};        
                                               
         \end{tikzpicture}
      \end{center}
   \caption{Border strip $[P,Q]$ from $(0,0)$ to $(5,7)$ and lattice path $R(P,Q)$ from $(0,0)$ to $(4,8)$.}
   \label{P-Q-R}
   \end{figure}
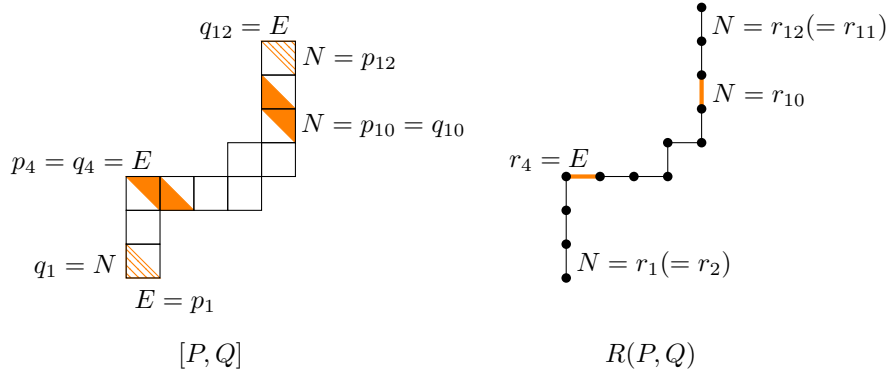     

   For a lattice path $R = R(P,Q)$ we define three sets $\mathcal{D}(R)$, $\mathcal{C}(R)$, $\mathcal{S}(R)$ as follows:
   \begin{align*}
   \mathcal{D}(R) &= \{i \text{-deletion of $M[P,Q]$} : r_i = \text{East step}\},\\
   \mathcal{C}(R) &= \{i \text{-contraction of $M[P,Q]$} : r_i = \text{North step}\},
   \text{and}\\
   \mathcal{S}(R) &= \{i \text{-direct sum of $M[P,Q]$} : r_i \neq r_{i+1}\}. 
   \end{align*}
   
   \noindent
   For each element of 
   $\mathcal{D}(R(P,Q)) \cupdot \mathcal{C}(R(P,Q)) \cupdot \mathcal{S}(R(P,Q))$ 
   we have the corresponding border strip. See Figure~\ref{R-C-D-LPMP}.

   \begin{figure}[h]
      \begin{center}
         \begin{tikzpicture}[scale=.30]
         
                   \filldraw[orange, fill=orange] (3,2)--(4,1)--(4,2)--(3,3)--cycle;
                   \draw (0,0)--(1,0)--(1,1)--(0,1)--cycle;
                   \draw (0,1)--(1,1)--(1,2)--(0,2)--cycle;
                   \draw (1,1)--(2,1)--(2,2)--(1,2)--cycle;
		   \draw (2,1)--(3,1)--(3,2)--(2,2)--cycle;
                   \draw (3,1)--(4,1)--(4,2)--(3,2)--cycle;
                   \draw (3,2)--(4,2)--(4,3)--(3,3)--cycle;
                   \draw (3,3)--(4,3)--(4,4)--(3,4)--cycle;
                   \draw (4,3)--(5,3)--(5,4)--(4,4)--cycle;
                   
                   \draw (6.5,2) node{$\rightarrow$};
                   \draw (6.5,-2) node{\footnotesize{$6$-contraction ($r_6 = N$)}};   
                   
                   \draw[ultra thick,orange] (11,2)--(12,1);
                   \draw (8,0)--(9,0)--(9,1)--(8,1)--cycle;
                   \draw (8,1)--(9,1)--(9,2)--(8,2)--cycle;
                   \draw (9,1)--(10,1)--(10,2)--(9,2)--cycle;
                   \draw (10,1)--(11,1)--(11,2)--(10,2)--cycle;
                   \draw (11,1)--(12,1)--(12,2)--(11,2)--cycle;
                   \draw (11,2)--(12,2)--(12,3)--(11,3)--cycle;
                   \draw (12, 2)--(13,2)--(13,3)--(12,3)--cycle;
                   
                   \filldraw[orange, fill=orange] (17,2)--(18,2)--(19,1)--(18,1)--cycle;
                   \draw (16,0)--(17,0)--(17,1)--(16,1)--cycle;
                   \draw (16,1)--(17,1)--(17,2)--(16,2)--cycle;
                   \draw (17,1)--(18,1)--(18,2)--(17,2)--cycle;
		   \draw (18,1)--(19,1)--(19,2)--(18,2)--cycle;
                   \draw (19,1)--(20,1)--(20,2)--(19,2)--cycle;
                   \draw (19,2)--(20,2)--(20,3)--(19,3)--cycle;
                   \draw (19,3)--(20,3)--(20,4)--(19,4)--cycle;
                   \draw (20,3)--(21,3)--(21,4)--(20,4)--cycle;
                   
                   \draw (22.5,2) node{$\rightarrow$};
                   \draw (22.5,-2) node{\footnotesize{$4$-deletion ($r_4 = E$)}};  
                   
                   \draw[ultra thick,orange] (25,2)--(26,1);
                   \draw (24,0)--(25,0)--(25,1)--(24,1)--cycle;
                   \draw (24,1)--(25,1)--(25,2)--(24,2)--cycle;
                   \draw (25,1)--(26,1)--(26,2)--(25,2)--cycle;
		   \draw (26,1)--(27,1)--(27,2)--(26,2)--cycle;
                   \draw (26,2)--(27,2)--(27,3)--(26,3)--cycle;
                   \draw (26,3)--(27,3)--(27,4)--(26,4)--cycle;
                   \draw (27,3)--(28,3)--(28,4)--(27,4)--cycle;
                                                                          
                   \filldraw[orange, fill=orange] (31,1)--(31,2)--(32,2)--(32,1)--cycle;
                   \draw (31,0)--(32,0)--(32,1)--(31,1)--cycle;
                   \draw (31,1)--(32,1)--(32,2)--(31,2)--cycle;
                   \draw (32,1)--(33,1)--(33,2)--(32,2)--cycle;
		   \draw (33,1)--(34,1)--(34,2)--(33,2)--cycle;
                   \draw (34,1)--(35,1)--(35,2)--(34,2)--cycle;
                   \draw (34,2)--(35,2)--(35,3)--(34,3)--cycle;
                   \draw (34,3)--(35,3)--(35,4)--(34,4)--cycle;
                   \draw (35,3)--(36,3)--(36,4)--(35,4)--cycle;
                   
                   \draw (37.5,2) node{$\rightarrow$};
                   \draw (37.5,-2) node{\footnotesize{$2$-direct sum ($r_2 \neq r_3$)}};   
                   
                   \draw[orange, fill=orange] (40,1) circle (1ex);
                   \draw (39,0)--(40,0)--(40,1)--(39,1)--cycle;
                   \draw (40,1)--(41,1)--(41,2)--(40,2)--cycle;
		   \draw (41,1)--(42,1)--(42,2)--(41,2)--cycle;
                   \draw (42,1)--(43,1)--(43,2)--(42,2)--cycle;
                   \draw (42,2)--(43,2)--(43,3)--(42,3)--cycle;
                   \draw (42,3)--(43,3)--(43,4)--(42,4)--cycle;
                   \draw (43,3)--(44,3)--(44,4)--(43,4)--cycle;
                        
                   \hspace{10mm}                                                   
                                               
                   \draw (0,8)--(1,8)--(1,9)--(0,9)--cycle;
                   \draw (0,9)--(1,9)--(1,10)--(0,10)--cycle;
                   \draw (1,9)--(2,9)--(2,10)--(1,10)--cycle;
                   \draw (2,9)--(3,9)--(3,10)--(2,10)--cycle;
                   \draw (3,9)--(4,9)--(4,10)--(3,10)--cycle;
                   \draw (3,10)--(4,10)--(4,11)--(3,11)--cycle;
                   \draw (3,11)--(4,11)--(4,12)--(3,12)--cycle;
		   \draw (4,11)--(5,11)--(5,12)--(4,12)--cycle;
                   \draw (2.5,6.3) node{\footnotesize{$[P,Q]$}};
                                                         
                   \draw (12,8)--(12,10)--(15,10)--(15,12)--(17,12);
                   \fill (12,8) circle(4pt);
                   \fill (12,9) circle(4pt);
                   \fill (12,10) circle(4pt);
                   \fill (13,10) circle(4pt);
                   \fill (14,10) circle(4pt);
                   \fill (15,10) circle(4pt);
                   \fill (15,11) circle(4pt);
                   \fill (15,12) circle(4pt);
                   \fill (16,12) circle(4pt);
                   \fill (17,12) circle(4pt);
                   \draw (15,6.3) node{\footnotesize{$R(P,Q)$}};                   
                   
                   \draw (31,10) node{\footnotesize{$R(P,Q) = r_1 r_2 r_3 r_4 r_5 r_6 r_7 r_8 r_9$}};
                   \draw (33,8.5) node{\footnotesize{$= N N E E E N N E E$}};
                                      
         \end{tikzpicture}
      \end{center}
   \caption{Border strips corresponding to deletion, contraction, and direct sum of $M[P,Q]$.}
   \label{R-C-D-LPMP}
   \end{figure}
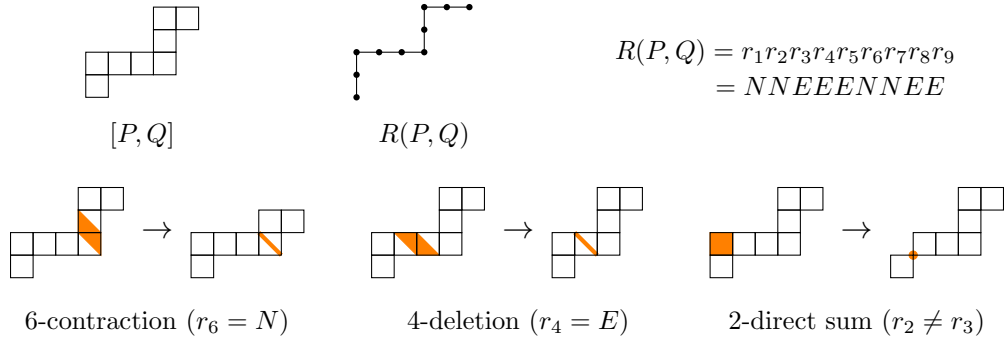 
      
   Note that the dimension of $\mathcal{P}(M[P,Q]) = \dim(\mathcal{P}(M[P,Q])) = m+r-1$ 
   for lattice paths $P$ and $Q$ from $(0,0)$ to $(m, r)$ with $P$ never going above $Q$ 
   where the region bounded by $P$ and $Q$ is a border strip 
   since $P$ and $Q$ satisfy $k=2$ in Corollary~\ref{dimension-of-matroid-polytope}.

   \begin{lemma}
   \label{facets-of-matroid-polytope}
   The set of facets of lattice path matroid polytope $\mathcal{P}(M[P,Q])$, 
   where the region bounded by $P$ and $Q$ is a border strip, 
   has a one-to-one correspondence with 
   $\mathcal{D}(R(P,Q)) \cupdot \mathcal{C}(R(P,Q)) \cupdot \mathcal{S}(R(P,Q))$. 
   \end{lemma}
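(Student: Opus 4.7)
The plan is to exhibit one facet of $\mathcal{P}(M[P,Q])$ for each element of $\mathcal{D}(R(P,Q)) \cupdot \mathcal{C}(R(P,Q)) \cupdot \mathcal{S}(R(P,Q))$, check that these facets are distinct, and then verify that no other facets exist. The starting point is that since $[P,Q]$ is a border strip, the bounding paths $P$ and $Q$ meet only at $(0,0)$ and $(m,r)$, so $M[P,Q]$ is connected and $\dim \mathcal{P}(M[P,Q]) = m+r-1$ by Corollary~\ref{dimension-of-matroid-polytope}. I will use the standard fact that every face of a matroid base polytope $\mathcal{P}(M)$ is itself the matroid base polytope $\mathcal{P}(M')$ of some matroid $M'$ on the same ground set; combined with Proposition~\ref{dim-of-matroid-polytope}, a facet of $\mathcal{P}(M[P,Q])$ then corresponds to a matroid $M'$ on $[m+r]$ with $c(M') = 2$ and $\mathcal{B}(M') \subseteq \mathcal{B}(M[P,Q])$.

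The forward direction is a case-by-case construction. For $i \in \mathcal{D}(R)$ (where $r_i$ is an East step), the set $\{B \in \mathcal{B}(M[P,Q]) : i \notin B\}$ is the basis set of $M[P,Q]|_{[m+r]-\{i\}}$, realized inside $\reals^{m+r}$ as the face $\mathcal{P}(M[P,Q]) \cap \{x_i = 0\}$; by Theorem~\ref{class-of-lattice-path-matroid} the deletion is a lattice path matroid on a smaller border strip (one cell fewer) and hence connected, so adjoining $i$ back as a loop yields a matroid on $[m+r]$ with exactly two connected components, i.e., a facet. Non-emptiness follows from the condition $r_i = E$, which via the definition of $R$ leaves room for a valid path to take an East step at position $i$. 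An entirely analogous argument handles $i \in \mathcal{C}(R)$ via the hyperplane $\{x_i = 1\}$, producing the contraction facet. For $i \in \mathcal{S}(R)$ at outside corner $(p,q)$ with $p+q = i$, the $(p,q)$-direct sum splits $[P,Q]$ into two sub-border-strips meeting only at $(p,q)$; the associated face is cut out by $\sum_{j=1}^{i} x_j = q$ and equals $\mathcal{P}(M_1 \oplus M_2)$ with both $M_1$ and $M_2$ connected by Theorem~\ref{class-of-lattice-path-matroid}.

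Distinctness of the constructed facets is immediate, since their supporting hyperplanes have different forms (coordinate hyperplanes $x_i = 0$ or $x_i = 1$ versus rank-sum equations), and different indices give different hyperplanes within each type. The remaining step is completeness: showing every facet arises this way. Given any facet with corresponding matroid $M'$ and connected component partition $\{A_1, A_2\}$ of $[m+r]$, if $|A_k|=1$ for some $k$, then the single element of $A_k$ is a loop or a coloop of $M'$, pinning down the facet as $\{x_i = 0\}$ or $\{x_i = 1\}$ and forcing $r_i = E$ or $r_i = N$ respectively (otherwise the candidate face would be empty or would fail the facet dimension count inside $\mathcal{P}(M[P,Q])$). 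If both $|A_1|, |A_2| \geq 2$, then restricting bases to each part must yield a lattice path matroid on a sub-region of $[P,Q]$. The main obstacle I anticipate is in this last case: using the no-$2\times 2$-square property of border strips to show that the only way the skew shape decomposes into two such pieces is by a split at an outside corner, matching an element of $\mathcal{S}(R)$. If a fully direct argument proves cumbersome, the completeness step can instead be deferred to Bidkhori's general description of facets of lattice path matroid polytopes, specialized to the border strip case.
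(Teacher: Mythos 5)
Your forward direction matches the paper's: you identify each element of $\mathcal{D}(R)\cupdot\mathcal{C}(R)\cupdot\mathcal{S}(R)$ with the face cut out by $x_i=0$, $x_i=1$, or $x_1+\cdots+x_i=q$, and verify the dimension via the connectedness/dimension formula; distinctness via the distinct supporting hyperplanes is also fine. The problem is the completeness step, which you yourself flag as the ``main obstacle'' and then leave unresolved. Classifying a facet as $\mathcal{P}(M')$ with $c(M')=2$ and analyzing the component partition $\{A_1,A_2\}$ is a legitimate strategy, but the two hard points are exactly the ones you defer: (i) in the singleton case, showing that a loop at $i$ can only yield a facet when $r_i=E$ (and a coloop only when $r_i=N$) requires an actual dimension computation for the ``wrong'' parity, not just the parenthetical assertion you give; and (ii) in the case $|A_1|,|A_2|\ge 2$, the claim that the only admissible splits occur at outside corners is precisely the content of the lemma and is not established. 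Falling back on ``defer to Bidkhori'' is not a proof, particularly since the paper presents this lemma as a result it proves (and later generalizes) rather than cites.

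For comparison, the paper closes completeness by a self-contained polyhedral argument that avoids classifying facet matroids altogether: assuming some facet is not among the listed hyperplanes, there would exist a point $x$ in the affine hull $\sum x_i=r$ lying in all the listed half-spaces but outside $\mathcal{P}(M[P,Q])$. Working along maximal runs of $E$'s and $N$'s in $R$, the listed inequalities force $0\le x_i\le 1$ for all $i$ and then, propagating across the $NE$ and $EN$ corners, $N(P,i)\le x_1+\cdots+x_i\le N(Q,i)$ for all $i$; these are exactly the defining inequalities of the lattice path matroid polytope, a contradiction. If you want to keep your matroid-theoretic route, you would need to supply the missing combinatorial argument that a connected lattice path matroid on a border strip decomposes as a direct sum of two connected restrictions only at an outside corner; otherwise the paper's half-space argument is the cleaner way to finish.
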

   
   \begin{proof}
   For the polytope $\mathcal{P}(M[P,Q]) = \mathcal{P}(\mathcal{B}(M)) 
   = \conv \{ e_B  = e_{b_1} + \cdots + e_{b_r} : B \in \mathcal{B} \}$, 
   take a subset $S^{-i} = \conv\{e_{B} : i \notin B \in \mathcal{B}\}$ 
   on the hyperplane $x_i = 0$ in $\reals^{m+r-k+2}$, 
   which is corresponding to $i$-deletion in $\mathcal{D}(R)$. 
   All the vertices $e_B \in \mathcal{P}(M[P,Q]) - S^{-i}$ lie on the half-space $x_i > 0$ 
   since their $i\text{-}th$ coordinates are $1 (> 0)$. 
   The dimension of $S^{-i}$ is $(m-1)+r-k+1 = m+r-k$ 
   since $k$ and $r$ are fixed and the only width $m$ dropped by $1$ during the $i$-deletion. 
   Hence, $S^{-i}$ is a facet of $\mathcal{P}(M[P,Q]).$
   
   Similarly for $i$-contraction in $\mathcal{C}(R)$
   if we take a subset $S^{+i} = \conv\{e_{B} : i \in B \in \mathcal{B}\}$ 
   on the hyperplane $x_i = 1$ in $\reals^{m+r-k+2}$, 
   all the vertices of $\mathcal{P}(M[P,Q]) - S^{+i}$ have $i\text{-}th$ coordinate $0 (< 1)$ and 
   lie on the half-space of the hyperplane $x_i < 1$. 
   After the $i-$contraction  the height $r$ dropped by $1$, while $m$ and $k$ are fixed, 
   and the dimension of $S^{+i}$ is $m+(r-1)-k+1 = m+r-k$. 
   Hence, $S^{+i}$ is also a facet of $\mathcal{P}(M[P,Q])$.

   For $i$-direct sum in $\mathcal{D}(R)$, without loss of generality, 
   we may assume that $r_i$ is East step and $r_{i+1}$ is North step. 
   That means a direct sum occurs at the point $(p,q)$ of the path $Q$ where $p+q = i$. 
   Take a subset $S^{i} = \conv\{e_{B} : |[i] \cap B| = q, B \in \mathcal{B}\}$ 
   of  $\mathcal{P}(M[P,Q])$. 
   Then, $S^i$ lies on the hyperplane $x_1+x_2+ \cdots + x_i = q$ and 
   the other points on the half-space $x_1+x_2+ \cdots + x_i < q$. 
   The dimension of $S^{i}$ is $m+r-(k+1)+1 = m+r-k$ 
   since we have the same end points and get one more intersection point after $i$-direct sum. 
   Hence, $S^{i}$ is a facet of $\mathcal{P}(M[P,Q])$. 
   
   For the other direction, 
   to show is all the facets of $\mathcal{P}(M[P,Q])$ are on the hyperplanes corresponding to
   $\mathcal{D}(R(P,Q)) \cupdot \mathcal{C}(R(P,Q)) \cupdot \mathcal{S}(R(P,Q))$. 
   Suppose a polytope $\mathcal{P}(M[P,Q])$ has a facet not lying on the hyperplane 
   from $\mathcal{D}(R(P,Q)) \cupdot \mathcal{C}(R(P,Q)) \cupdot \mathcal{S}(R(P,Q))$. 
   That means hyperplanes corresponding to 
   $\mathcal{D}(R(P,Q)) \cupdot \mathcal{C}(R(P,Q)) \cupdot \mathcal{S}(R(P,Q))$ 
   do not generate $\mathcal{P}(M[P,Q])$ 
   in its affine hull  $x_1 + x_2 + \cdots + x_{m+r-k+2} = r$. 
   Then, there exists a point 
   $x = (x_1, x_2, \ldots, x_{m+r-k+2}) \in \reals^{m+r-k+2} - \mathcal{P}(M[P,Q])$, 
   located in the intersection of all the half-spaces mentioned in the previous paragraphs 
   and $x_1 + x_2 + \cdots + x_{m+r-k+2} = r$. 
   
   Since $x$ satisfies above conditions,
   for any maximal sequence of consecutive $E$'s of $R$, $r_u r_{u+1} \cdots r_{u+v}$,
   we have $x_i \geq 0$ for $i \in [u, u+v]$ and 
   $x_1 + x_2 + \cdots + x_u \geq N(R,u)$ and 
   $x_1 + x_2 + \cdots + x_{u+v} \leq N(R, u) + 1$ 
   where $N(R, i)$ is the number of North steps until $i$th step of $R$. 
   Then, $x_i \leq 1$ for $i \in [u, u+v]$, and
   this implies $0 \leq x_i \leq 1$ if $r_i = E$. 
   Similarly, for any maximal sequence $r_u r_{u+1} \cdots r_{u+v}$ of consecutive $N$'s of $R$, 
   we obtain $0 \leq x_i \leq 1$ for $i \in [u, u+v]$, and 
   this  means $0 \leq x_i \leq 1$ if $r_i = N$. 
   Hence, we get $0 \leq x_i \leq 1$ for all $i$ in $[m+r]$. 
   
   For each sequence $r_j r_{j+1}$ of $R$ such that $r_j r_{j+1} = NE$ or $j = m+r$, 
   two conditions, $x_j \leq 1$ and $x_1 + x_2 + \cdots + x_j \geq N(R, j) = N(P, j)$, are given. 
   Hence, it follows that 
   $x_1 + x_2 + \cdots + x_{j-1} \geq N(R, j) - 1 = N(R, j-1) = N(P, j-1)$. 
   If $r_{j-1}$ is a North step, 
   $x_{j-1} \leq 1$ and $x_1 + x_2 + \cdots + x_{j-2} \geq N(R, j-1) - 1 = N(R, j-2) = N(P, j-2)$. 
   If $r_{j-1}$ is an East step, 
   $x_1 + x_2 + \cdots + x_{j-2} \geq N(R, h) = N(R, j-2) = N(P, j-2)$  
   where $r_h r_{h+1}$ is a previous $NE$ sequence of $R$ or $h=1$. 
   After checking each East step $r_{j-k} $ for $1 \leq k \leq j-h-1$, 
   we  have $x_1 + x_2 + \cdots + x_i \geq N(P, i)$ for all $i$ in $[m+r]$. 
   If we apply a similar way to each subsequence $r_j r_{j+1}$ of $R$ 
   such that $r_j r_{j+1} = EN$ or $j=1$ 
   where $x_{j} \geq 0$ and $x_1 + x_2 + \cdots + x_j \leq N(R, j) + 1 = N(Q, j)$, 
   we also get $x_1 + x_2 + \cdots + x_i \leq N(Q,i)$ for all $i$ in $[m+r]$. 
   
   Hence, we conclude 
   $0 \leq x_i \leq 1$ and 
   $N(P,i) \leq x_1 + x_2 + \cdots + x_i \leq N(Q,i)$ for all $i \in [m+r]$. 
   This is a contradiction to the fact that $x$ is not a point in $\mathcal{P}(M[P,Q])$.
   Therefore, hyperplanes corresponding to 
   $\mathcal{D}(R(P,Q)) \cupdot \mathcal{C}(R(P,Q)) \cupdot \mathcal{S}(R(P,Q))$ generate 
   $\mathcal{P}(M[P,Q])$ in its affine hull  $x_1 + x_2 + \cdots + x_{m+r-k+2} = r$, and 
   all the facets of $\mathcal{P}(M[P,Q])$ are on the hyperplanes corresponding to 
   $\mathcal{D}(R(P,Q)) \cupdot \mathcal{C}(R(P,Q)) \cupdot \mathcal{S}(R(P,Q))$.
   \end{proof}
       
   \begin{corollary}
   \label{number-of-facets}
   For lattice paths $P$ and $Q$ from $(0,0)$ to $(m, r)$ 
   with the region bounded by $P$ and $Q$ as a border strip, 
   the number of facets of $\mathcal{P}(M[P,Q])$ is 
   $m+r+d$ where $d$ is the number of outside corners of the region $[P,Q]$.
   \end{corollary}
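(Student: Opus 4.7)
The plan is to invoke Lemma~\ref{facets-of-matroid-polytope}, which puts the facets of $\mathcal{P}(M[P,Q])$ in bijection with the disjoint union $\mathcal{D}(R) \cupdot \mathcal{C}(R) \cupdot \mathcal{S}(R)$, where $R = R(P,Q)$. The corollary then reduces to a counting problem, which I would split into two sub-counts corresponding to how each of these three sets is indexed.

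First I would evaluate $|\mathcal{D}(R)| + |\mathcal{C}(R)|$. By definition these sets are indexed by the East steps and the North steps of $R$ respectively, so together they partition the steps of $R$. The path $R$ always has length $m+r$: in the generic case this is immediate, and in the degenerate case $m+r=2$, where $R$ is declared to be two consecutive North steps, the length is still $2$. Hence $|\mathcal{D}(R)| + |\mathcal{C}(R)| = m+r$.

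Next I would show that $|\mathcal{S}(R)| = d$. The set $\mathcal{S}(R)$ is indexed by positions $i$ with $r_i \neq r_{i+1}$, that is, by direction changes of $R$. The defining conditions $r_1 = r_2$ and $r_{m+r-1} = r_{m+r}$ force every direction change of $R$ to lie in the range $2 \leq i \leq m+r-2$, and on this range the identities $r_i = p_i = q_i$ and $r_{i+1} = p_{i+1} = q_{i+1}$ hold. A direction change at such an $i$ therefore has $r_i r_{i+1} \in \{NE, EN\}$; in the first case this reads off an $NE$ corner of $P$, and in the second an $EN$ corner of $Q$, so either way we obtain exactly one outside corner of $[P,Q]$. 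Conversely, the endpoint conditions $p_1 = q_{m+r} = E$ and $q_1 = p_{m+r} = N$ prevent $NE$ corners on $P$ and $EN$ corners on $Q$ from occurring at $i = 1$ or $i = m+r-1$, so every outside corner arises from a unique direction change of $R$. Hence $|\mathcal{S}(R)| = d$.

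Combining the two sub-counts gives $|\mathcal{D}(R)| + |\mathcal{C}(R)| + |\mathcal{S}(R)| = m + r + d$, as required. The step that will take the most care is verifying the bijection between direction changes of $R$ and outside corners, since one must confirm that the endpoint conditions really do preclude spurious outside corners at $i = 1$ and $i = m+r-1$; the degenerate case $m+r = 2$ also merits a brief sanity check, but there $R = NN$ forces $|\mathcal{S}(R)| = 0$ and the unique cell of $[P,Q]$ has no outside corners, so the formula correctly reduces to $0 + 2 + 0 = 2$. Beyond these bookkeeping points I anticipate no genuine obstacle.
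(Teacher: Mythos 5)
Your proposal is correct and follows exactly the route the paper intends: the corollary is stated as an immediate consequence of Lemma~\ref{facets-of-matroid-polytope}, with the count $|\mathcal{D}(R)|+|\mathcal{C}(R)|=m+r$ coming from the $m+r$ steps of $R$ and $|\mathcal{S}(R)|=d$ from the bijection between direction changes of $R$ and outside corners of $[P,Q]$. Your careful handling of the endpoint conditions and the $m+r=2$ case is sound bookkeeping that the paper leaves implicit.
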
   

   Not only facets of lattice path matroid polytope, 
   we will find a one to one corresponding set 
   for all the faces of lattice path matroid polytope.

   If we consider a lattice path $R(P,Q)$ as a sequence on $\{E, N\}^{m+r}$ and 
   cut the sequence $R$ at every direct sum position, 
   we may get $d+1$ subsequences where $d = |\mathcal{S}(R(P,Q))| = \text{the number of corners of } R(P,Q)$. 
   Let $(S_1, S_2, \ldots, S_{d+1})$ be a set partition of $\mathcal{D}(R(P,Q)) \cupdot \mathcal{C}(R(P,Q))$ with $S_i$ 
   corresponding to $i$th subsequence of $R$. 
   Define a set $S_i^{L} = S_i \cup \{(i-1) \text{th direct sum}\}$ for $1 < i \leq d+1$ and $S_1^L = S_1$. 
   Similarly, define a set $S_i^R = S_i \cup \{i \text{th direct sum}\}$ for $1 \leq i < d+1$ and $S_{d+1}^R = S_{d+1}$.

   For a subset $T$ of $\mathcal{D}(R(P,Q)) \cupdot \mathcal{C}(R(P,Q)) \cupdot \mathcal{S}(R(P,Q))$ 
   we introduce the following three conditions:

   \begin{enumerate}
      \item[(C1)]
      $S_i^R \nsubseteq T$ for $2 \leq i \leq d+1$.
      \item[(C2)]
      For each sequence $S_i^L, S_{i+1}, \ldots, S_{d}$ where $1 \leq i \leq d$,
      a subsequence $S_i^L, S_{i+1}, \ldots, S_{j}$ can be included in $T$ 
      if and only if the subsequence has an even-length and $S_{j+1} \nsubseteq T$.
      \item[(C3)]
      A sequence $S_i^L, S_{i+1}, \ldots, S_{d+1}-\{(m+r)\text{-deletion}, (m+r)\text{-contraction}\}$
      can be included in $T$ for $1 \leq i \leq d$ 
      if and only if the sequence has an even-length.
   \end{enumerate}

   \begin{example}
   \label{3-rules}
   For a lattice path $R(P,Q) = E^2N^2ENE^3NEN^4$,
   we have $d = 7$ and
   $S_1 = \{1 \text{-deletion}, 2 \text{-deletion}\}$,
   $S_2 = \{3 \text{-contraction}, 4 \text{-contraction}\}$,
   $S_3 = \{5 \text{-deletion}\},$ $\ldots ,$
   $S_8 = \{12 \text{-contraction}, \ldots, 15 \text{-contraction}\}$.
   Figure~\ref{3-conditions} shows condition (C1), (C2), and (C3).
   
   (a) The orange arrows represent $7$ sets,
   $S_2^R, S_3^R, \ldots, S_7^R, \text{and}~ S_8^R (=S_8)$,
   which are not included in $T$ by (C1). 
   As an example, $T$ cannot be a $4$-subset such as 
   $\{3 \text{-contraction}, 4 \text{-contraction}, 4 \text{-direct sum}, 5 \text{-deletion}\}$
   since it contains $S_2^R$.
   
   (b) The orange arrow is a sequence $S_2^L, S_{3}, \ldots, S_{7}$.
   By condition (C2), $\{2 \text{-direct sum}, 3 \text{-contraction}, 4 \text{-contraction}\}$
   cannot be $T$ since it contains $S_2^L$, but not $S_3$. 
   However, $\{2 \text{-direct sum}, 3 \text{-contraction}, 4 \text{-contraction}, 5 \text{-deletion}\}$
   can be $T$ since it includes $S_2^L$ and $S_3$, but not $S_4$.  
   
   (c) As we see the green and blue arrows in Figure~\ref{3-conditions}(b), 
   we need the third condition (C3) if $j = d+1$. 
   The green arrow represents the last condition that 
   $T$ can include a sequence $S_3^L, S_4, S_5, S_6, S_7, S_8-\{15 \text{-contraction}\}$, 
   and the last condition for the blue arrow is that
   a sequence $S_1^L, S_2, S_3, S_4, S_5, S_6, S_7, S_8-\{15 \text{-contraction}\}$ can be contained in $T$.
   
   \begin{figure}[h]
      \begin{center}
         \begin{tikzpicture}[scale=.35]
         
                   \draw (0,0)--(2,0)--(2,2)--(3,2)--(3,3)--(6,3)--(6,4)--(7,4)--(7,8);
                   \fill (0,0) circle(6pt);
                   \fill (1,0) circle(6pt);
                   \fill (2,0) circle(6pt);
                   \fill (2,1) circle(6pt);
                   \fill (2,2) circle(6pt);
                   \fill (3,2) circle(6pt);
                   \fill (3,3) circle(6pt);
                   \fill (4,3) circle(6pt);
                   \fill (5,3) circle(6pt);
                   \fill (6,3) circle(6pt);
                   \fill (6,4) circle(6pt);
                   \fill (7,4) circle(6pt);
                   \fill (7,5) circle(6pt);
                   \fill (7,6) circle(6pt);
                   \fill (7,7) circle(6pt);
                   \fill (7,8) circle(6pt);
                   \draw (4,-1.6) node{\footnotesize{(a) $S_i^R$ for $2 \leq i \leq 8$}};
                   
                   \draw[orange,thick,->] (7,8) -- (7.3,4);
                   \fill (7,8) circle(4pt);
                   \draw[orange,thick,->] (7,4) -- (6,4.3);
                   \fill[orange] (7,4) circle(4pt);
                   \draw[orange,thick,->] (6,4) -- (6.3,3);
                   \fill[orange] (6,4) circle(4pt);
                   \draw[orange,thick,->] (6,3) -- (3,3.3);
                   \fill[orange] (6,3) circle(4pt);
                   \draw[orange,thick,->] (3,3) -- (3.3,2);
                   \fill[orange] (3,3) circle(4pt);
                   \draw[orange,thick,->] (3,2) -- (2,2.3);
                   \fill[orange] (3,2) circle(4pt);
                   \draw[orange,thick,->] (2,2) -- (2.3,0);
                   \fill[orange] (2,2) circle(4pt);
                   
                   \hspace{-5mm}
                   
                   \draw (15,0)--(17,0)--(17,2)--(18,2)--(18,3)--(21,3)--(21,4)--(22,4)--(22,8);
                   \fill (15,0) circle(6pt);
                   \fill (16,0) circle(6pt);
                   \fill (17,0) circle(6pt);
                   \fill (17,1) circle(6pt);
                   \fill (17,2) circle(6pt);
                   \fill (18,2) circle(6pt);
                   \fill (18,3) circle(6pt);
                   \fill (19,3) circle(6pt);
                   \fill (20,3) circle(6pt);
                   \fill (21,3) circle(6pt);
                   \fill (21,4) circle(6pt);
                   \fill (22,4) circle(6pt);
                   \fill (22,5) circle(6pt);
                   \fill (22,6) circle(6pt);
                   \fill (22,7) circle(6pt);
                   \fill (22,8) circle(6pt);
                   \draw (20,-1.6) node{\footnotesize{(b) $S_i^L, S_{i+1}, \ldots, S_{j}$ for $1 \leq i \leq 7$}};
                   
                   \draw[blue,thick,->] (15,0)--(17.3,-0.3)--(17.3,1.7)--(18.3,1.7)--(18.3,2.7)--(21.3,2.7)--(21.3,3.7)--(22.3,3.7)--(22.3,7);
                   \fill (15,0) circle(4pt);
                   \draw[orange,thick,->] (17,0)--(16.4,2.6)--(17.4,2.6)--(17.4,3.6)--(20.4,3.6)--(20.4,4.6)--(22,4.6);
                   \fill[orange] (17,0) circle(4pt);
                   \draw[olive,thick,->] (17,2)--(17.7,2.3)--(17.7,3.3)--(20.7,3.3)--(20.7,4.3)--(21.7,4.3)--(21.7,7);
                   \fill[olive] (17,2) circle(4pt);                  
                   \draw[gray,thick,dashed] (22,6) circle (2);
                   \draw[gray,->,snake=snake,segment amplitude=.4mm,segment length=2mm,line after snake=1mm] (24,6)--(31,6);               
                   
                   \draw (30,0.5)--(34.5,0.5)--(34.5,2)--(36,2)--(36,8);
                   \fill (30,0.5) circle(6pt);
                   \fill (31.5,0.5) circle(6pt);
                   \fill (33,0.5) circle(6pt);
                   \fill (34.5,0.5) circle(6pt);
                   \fill (34.5,2) circle(6pt);
                   \fill (36,2) circle(6pt);
                   \fill (36,3.5) circle(6pt);
                   \fill (36,5) circle(6pt);
                   \fill (36,6.5) circle(6pt);
                   \fill (36,8) circle(6pt); 
                   \draw (34,-1.6) node{\footnotesize{(c) $S_{j} = S_8$}};
                   
                   \draw[gray,thick,dashed] (34,6.5)--(38,6.5);
                   \draw[gray,thick,dashed] (35,5) circle (4);
                   \draw[orange,thick,->] (30,1.1)--(33.9,1.1)--(33.9,2.6)--(36,2.6);
                   \draw[blue,thick,->] (30,0.2)--(34.8,0.2)--(34.8,1.7)--(36.3,1.7)--(36.3,6.5);
                   \draw[olive,thick,->] (30,0.8)--(34.2,0.8)--(34.2,2.3)--(35.7,2.3)--(35.7,6.5);
                                       
         \end{tikzpicture}
      \end{center}
   \caption{Corners on $R(P,Q)$.}
   \label{3-conditions}
   \end{figure}
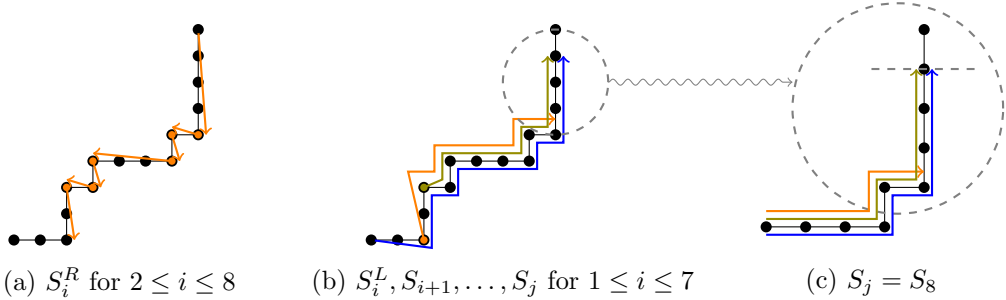
   \end{example}

   \begin{theorem}
   \label{faces(m+r-1-l)-of-matroid-polytope}
      For lattice paths $P$ and $Q$ from $(0,0)$ to $(m, r)$ with $P$ never going above $Q$ and 
      the region bounded by $P$ and $Q$ being a border strip, 
      the set of $(m+r-1-t)$-dimensional faces of the lattice path matroid polytope $\mathcal{P}(M[P,Q])$ has a one-to-one correspondence with 
      $t$-subsets of $\mathcal{D}(R(P,Q)) \cupdot \mathcal{C}(R(P,Q)) \cupdot \mathcal{S}(R(P,Q))$ satisfying condition (C1), (C2), and (C3).
   \end{theorem}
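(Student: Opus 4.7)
The plan is to upgrade Lemma~\ref{facets-of-matroid-polytope} from $t=1$ to general $t$ by taking intersections of facets. For a $t$-subset $T$ of $\mathcal{D}(R(P,Q)) \cupdot \mathcal{C}(R(P,Q)) \cupdot \mathcal{S}(R(P,Q))$, I set $F_T = \bigcap_{F \in T} F$, where $F$ ranges over the facets from Lemma~\ref{facets-of-matroid-polytope} indexed by the elements of $T$. The goal is to show that $F_T$ is a face of dimension $m+r-1-t$ precisely when $T$ satisfies (C1)--(C3), and that every face of that dimension arises from a unique such $T$.

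For the forward direction, I will identify $F_T$ with the base polytope $\mathcal{P}(M_T)$ of the lattice path matroid $M_T$ obtained by performing the operations in $T$ on $M[P,Q]$ in any order. That $M_T$ is again a lattice path matroid uses Theorem~\ref{class-of-lattice-path-matroid}. Then, by Corollary~\ref{dimension-of-matroid-polytope}, $\dim F_T = m' + r' - k' + 1$, where $m',r'$ are the dimensions of the reduced region and $k'$ the number of intersections of its bounding paths. I will show that in the generic case each operation in $T$ drops the right-hand side by exactly one: an $i$-deletion reduces $m'$ by one, an $i$-contraction reduces $r'$ by one, and an $i$-direct sum increments $k'$ by one. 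When every operation so contributes, $\dim F_T = m+r-1-t$ as desired.

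The heart of the argument is that (C1)--(C3) capture exactly the non-generic situations. Condition (C1) prevents taking the entire set $S_i$ together with the $i$th direct sum at its right end: performing all of $S_i$ collapses piece $i$ to a point, so that endpoint already lies on both bounding paths of the reduced region and the $i$th direct sum is redundant, contributing no new intersection. Conditions (C2) and (C3) govern the same phenomenon extended across several consecutive pieces bounded by direct sums. Because the pieces of a border strip alternate between horizontal and vertical runs, the left and right endpoints of a collapsed block $S_i \cup \cdots \cup S_j$ can be read off explicitly, and they coincide with the adjacent direct sum points exactly when the number of collapsed pieces has a specific parity. I plan to verify (C1)--(C3) by a direct bookkeeping of these endpoint coordinates, case by case.

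For the reverse direction, every face of $\mathcal{P}(M[P,Q])$ is itself the base polytope of a lattice path matroid on a sub-region of $[P,Q]$: by Theorem~\ref{thm-matroid-polytopes} each edge is of the form $e_\alpha - e_\beta$, so every face is a matroid polytope, and by Theorem~\ref{class-of-lattice-path-matroid} it remains in the lattice path class. From the sub-region I can read off a canonical minimal $T$, which automatically satisfies (C1)--(C3) by minimality, and injectivity of $T \mapsto F_T$ follows from reconstructing the sub-region from $T$. The main obstacle will be the parity bookkeeping in (C2)--(C3): confirming that an odd-length alternating collapse produces a coincidence of endpoints which an even-length one avoids, and that this coincidence is precisely the source of the loss of one in the dimension count. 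Once that explicit case analysis is settled, the dimension count and bijectivity follow.
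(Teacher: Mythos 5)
Your overall architecture --- intersect the facets of Lemma~\ref{facets-of-matroid-polytope} indexed by $T$, identify the result with a matroid base polytope, and use the dimension formula of Corollary~\ref{dimension-of-matroid-polytope} to argue that each operation generically drops the dimension by one --- matches the paper's. The genuine gap is in your explanation of what goes wrong when (C1)--(C3) fail, which is the step you yourself call the heart of the argument. You claim that after performing all of $S_i$ the adjacent direct sum becomes \emph{redundant} (``contributing no new intersection''), so the dimension fails to drop. The opposite happens: the direct-sum hyperplane cuts the already-reduced face by \emph{more} than one dimension, or empties it, which is how the paper argues. Concretely, take the border strip $P=EENEN$, $Q=NENEE$ from $(0,0)$ to $(3,2)$; the bases are the pairs $\{a,b\}$ with $a\in[1,3]$, $b\in[3,5]$, $a\ne b$, and $\mathcal{P}(M[P,Q])$ is $4$-dimensional. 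Here $R=EENEE$, $S_2=\{3\text{-contraction}\}$, and the second direct sum is the $3$-direct sum at $(2,1)$ with hyperplane $x_1+x_2+x_3=1$. The facet $x_3=1$ is the $3$-simplex on $e_{13},e_{23},e_{34},e_{35}$; intersecting it with $x_1+x_2+x_3=1$ leaves only the edge $e_{34}e_{35}$, a drop from dimension $3$ to dimension $1$. So the direct sum is anything but redundant there, and your identification of $F_T$ with ``$\mathcal{P}(M_T)$ obtained by performing the operations in any order'' is precisely what breaks in the degenerate cases: the facet-intersection picture and the region-operation picture no longer agree. A correct argument must show that the resulting face is too \emph{small} (or empty), not too large; the endpoint bookkeeping you plan, built on the redundancy premise, would not establish this.

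The reverse direction also has a gap: ``read off a canonical minimal $T$, which automatically satisfies (C1)--(C3) by minimality'' is not a workable recipe, because minimal facet sets cutting out a given face need not satisfy the conditions and need not have the right cardinality. For the hook $P=EENN$, $Q=NENE$ (a square pyramid with apex $e_{34}$), the vertex $e_{34}$ is already cut out by the two facets $x_1=0$ and $x_2=0$, yet $\{1\text{-deletion},2\text{-deletion}\}$ violates (C2); the unique admissible $3$-subset for this vertex is $\{1\text{-deletion},2\text{-deletion},3\text{-contraction}\}$, which is neither minimal nor the full set of facets through $e_{34}$ (that set is $\{1\text{-deletion},2\text{-deletion},3\text{-contraction},4\text{-contraction}\}$ and violates (C1)). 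Showing that every $(m+r-1-t)$-dimensional face arises from \emph{exactly one} admissible $t$-subset therefore needs an argument your proposal does not supply.
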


   \begin{proof}
   Note that 
   $(m+r-1-t)$-dimensional faces of lattice path matroid polytope $\mathcal{P}(M[P,Q])$ are 
   facets of $(m+r-t)$-dimensional faces of $\mathcal{P}(M[P,Q])$. 
   First, to show is each construction, 
   in $t$-subsets of 
   $\mathcal{D}(R(P,Q)) \cupdot \mathcal{C}(R(P,Q)) \cupdot \mathcal{S}(R(P,Q))$ 
   satisfying condition (C1), (C2), and (C3), 
   is possible after other constructions have been done 
   while the dimensions are being dropped from $m+r-1$ to $m+r-1-t$. 
   That means, 
   any $s$ structures from $t$-subsets drop 
   the dimension of $\mathcal{P}(M[P,Q])$  by $s$ exactly, where $1 \leq s \leq t$. 
   It is not hard to check using the similar steps in Lemma~\ref{facets-of-matroid-polytope}.
   
   For the other direction, 
   suppose that $S_i^R \subseteq T$ for some $i~(2 \leq i \leq d+1)$ in (C1).
   Since $i$-direct sum drops more than $1$ dimension of the polytope 
   obtained by $|S_i|$ contractions (or deletions) in $S_i^R$, 
   the dimension of faces after applying all the constructions in $S_i^R$ is 
   less than $m+r-1-|S_i^R|$. 
   Similarly, suppose 
   an odd length sequence $S_i^L, S_{i+1}, \ldots, S_{j} \subseteq T$ and $S_{j+1} \nsubseteq T$ 
   for some $i$ and $j$ such that $1 \leq i < j \leq d$ in (C2). 
   Then, the dimension of faces 
   after all the constructions in $S_i^L, S_{i+1}, \ldots, S_{j} \subseteq T$ are applied is 
   less than $m+r-1-(|S_i^L| + |S_{i+1}| + \cdots + |S_{j}|)$
   since $(i-1)$-direct sum drops more than $1$ dimension of the polytope obtained 
   after $|S_i| + |S_{i+1}| + \cdots + |S_{j}|$ contractions and deletions. 
   As the same way,  $(i-1)$-direct sum also drops more than $1$ dimension of the polytope in (C3). 
   Therefore, the set of $(m+r-1-t)$-dimensional faces of lattice path matroid polytope $\mathcal{P}(M[P,Q])$ has a one-to-one correspondence with 
   $t$-subsets of $\mathcal{D}(R(P,Q)) \cupdot \mathcal{C}(R(P,Q)) \cupdot \mathcal{S}(R(P,Q))$ satisfying condition (C1), (C2), and (C3).  
   \end{proof}

\section{General case}
\label{sec-skew-shape}
   
   In this section, we consider the region $[P,Q]$ as a connected (non-empty) skew shape, and 
   the faces of a lattice path matroid polytope $\mathcal{P}(M[P,Q])$ are described 
   in terms of certain tiled subregions without $2 \times 2$ rectangles inside $[P,Q]$. 
   Note that, even if the subregion is not allowed, 
   $[P,Q]$ may contain $2 \times 2$ rectangles unlike previous sections.
      
   We begin with the following proposition 
   which is a generalization of Lemma~\ref{facets-of-matroid-polytope}. 
   Since $[P,Q]$ is a connected skew shape now, 
   $P$ and $Q$ have only two intersection points $(0,0)$ and $(m,r)$,
   and $R$ from $[P,Q]$ is a sequence corresponding to a border strip from $(0,0)$ to $(m,r)$ contained in $[P,Q]$. 
   Its proof is omitted since it is similar to the proof of Lemma~\ref{facets-of-matroid-polytope}.
   
   \begin{proposition}
   \label{facets-of-matroid-polytope-general-case}
      The set of facets of lattice path matroid polytope $\mathcal{P}(M[P,Q])$ has one-to-one correspondence with 
      the disjoint union of the following three sets:
      \begin{align*}
         \mathcal{D}(P,Q) &= \{i \text{-deletion} : r_i = \text{East step for some $R$ from } [P,Q]\},\\
         \mathcal{C}(P,Q) &= \{i \text{-contraction} : r_i = \text{North step for some $R$ from } [P,Q]\}, \text{and}\\
         \mathcal{S}(P,Q) &= \{(p,q) \text{-direct sum} : (p, q) \text{ is an outside corner of } [P,Q]\}.
      \end{align*}
   \end{proposition}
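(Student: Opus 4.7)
The plan is to replicate the two-direction argument of Lemma~\ref{facets-of-matroid-polytope}, treating the additional flexibility of a general skew shape as mostly bookkeeping.

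For the forward direction, I would exhibit each element of the three sets as the intersection of $\mathcal{P}(M[P,Q])$ with a supporting hyperplane of codimension one. For an $i$-deletion arising from some border strip $R \subseteq [P,Q]$ with $r_i = E$, the hyperplane is $x_i = 0$: the existence of such an $R$ produces a lattice path in $[P,Q]$ whose $i$-th step is East, hence a basis $B$ with $i \notin B$, and the restriction $M[P,Q]|_{E(M)-\{i\}}$ is itself a lattice path matroid whose polytope has dimension exactly one less than $\dim \mathcal{P}(M[P,Q])$ by Corollary~\ref{dimension-of-matroid-polytope}. The argument for $i$-contractions on the hyperplane $x_i = 1$ is symmetric. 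For a $(p,q)$-direct sum at an outside corner, the hyperplane is $x_1 + \cdots + x_{p+q} = q$; splitting $[P,Q]$ at $(p,q)$ produces a direct sum whose connected-component count exceeds $c(M[P,Q])$ by exactly one, again dropping the dimension by one. In each case the remaining vertices of $\mathcal{P}(M[P,Q])$ lie strictly on one side of the hyperplane, certifying a facet.

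For the reverse direction, I would take a point $x \in \reals^{m+r}$ that satisfies every half-space inequality coming from $\mathcal{D}(P,Q) \cupdot \mathcal{C}(P,Q) \cupdot \mathcal{S}(P,Q)$ together with $x_1 + \cdots + x_{m+r} = r$, and show $x \in \mathcal{P}(M[P,Q])$. By Theorem~\ref{thm-bases-of-lattice-path-matroids} the polytope is cut out by the affine-hull equation together with $0 \le x_i \le 1$ and $N(P,i) \le x_1 + \cdots + x_i \le N(Q,i)$ for every $i \in [m+r]$, so it suffices to derive each of these from the assumed inequalities. For any fixed border strip $R \subseteq [P,Q]$ with inner envelope $[P_R, Q_R]$, the inductive walk back from each $NE$-corner (yielding the lower prefix-sum bounds) and from each $EN$-corner (yielding the upper bounds) developed in the proof of Lemma~\ref{facets-of-matroid-polytope} carries over verbatim, producing $x_i \in [0,1]$ and $N(P_R, i) \le \sum_{j \le i} x_j \le N(Q_R, i)$.

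The main obstacle, and the only genuine novelty compared to the border-strip case, is verifying that the union over all border strips $R \subseteq [P,Q]$ of these locally derived inequalities is precisely the full lattice-path description of $[P,Q]$ itself, rather than some strictly tighter inner envelope. I would resolve this by a covering lemma: every $NE$-corner of $P$ lies on some border strip $R \subseteq [P,Q]$, and symmetrically for every $EN$-corner of $Q$; such an $R$ is produced by tracing $P$ locally through the chosen corner and closing the strip using steps borrowed from $Q$. Consequently $N(P,i) = N(P_R, i)$ for a suitable $R$, and similarly $N(Q,i) = N(Q_{R'}, i)$ for some possibly different $R'$, so the required inequalities are all in the assumed system. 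Once this covering is in place the remainder of the argument is identical to Lemma~\ref{facets-of-matroid-polytope}, and the three sets are tautologically disjoint as families of matroid operations, establishing the bijection.
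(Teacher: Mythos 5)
Your proposal is correct and follows essentially the route the paper intends: the paper omits the proof of Proposition~\ref{facets-of-matroid-polytope-general-case}, saying only that it is similar to that of Lemma~\ref{facets-of-matroid-polytope}, and your argument is exactly that adaptation --- supporting hyperplanes $x_i=0$, $x_i=1$, and $x_1+\cdots+x_{p+q}=q$ for the forward direction, and a derivation of the full description $0\le x_i\le 1$, $N(P,i)\le x_1+\cdots+x_i\le N(Q,i)$ from the assumed half-spaces for the reverse, with the covering observation supplying the only genuinely new ingredient. One small point worth tightening: in the reverse direction the anchor inequalities $x_1+\cdots+x_{p+q}\ge q$ (at $NE$-corners of $P$) and $\le q$ (at $EN$-corners of $Q$) come directly from $\mathcal{S}(P,Q)$ being indexed by the outside corners of $[P,Q]$, so the backward walk should be anchored at those corners rather than at prefix sums $N(P_R,i)$ of individual border strips, which are not themselves among the assumed inequalities.
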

   
   \noindent
   Note that if the region $[P,Q]$ is a border strip, 
   the above three sets coincide with 
   $\mathcal{D}(R), \mathcal{C}(R)$, and $\mathcal{S}(R)$ in the previous section, respectively.
      
   Since a face of $\mathcal{P}(M[P,Q])$ is 
   a facet of a one-higher-dimensional face of $\mathcal{P}(M[P,Q])$, 
   Proposition~\ref{facets-of-matroid-polytope-general-case} implies that 
   all the faces of $\mathcal{P}(M[P,Q])$ are obtained 
   after applying deletions, contractions, and direct sums.
   Note that a set of matroid constructions used to generate a face of $\mathcal{P}(M[P,Q])$ 
   may not be uniquely determined.
   
   Before we give a description of faces of $\mathcal{P}(M[P,Q])$, 
   we need to define several notions.
   We label each unit box having points $(i,j)$ and $(i+1,j+1)$ inside the region $[P,Q]$ 
   with $i+j+1$, 
   and let $(i,j)$ be the \emph{starting point} and 
   $(i+1,j+1)$ be the \emph{ending point} of the box. 
   A \emph{block} is a border strip located inside $[P,Q]$, 
   and we may consider a block as a tableau with labels of boxes in the block. 
   The starting point and ending point of a block are 
   the starting point of the smallest labelled box and 
   the ending point of the largest labelled box contained in the block respectively.
   The \emph{clones} inside the region $[P,Q]$ are blocks such that 
   they are the same as tableaux and distinguishable only by the difference in their positions.
   A block is a clone of itself.
   In Figure~\ref{clone}, 
   the block with the starting point $(1,0)$ and the ending point $(3,2)$
   is a clone of
   the block with the starting point $(0,1)$ and the ending point $(2,3)$, and vice versa.
   
   \begin{figure}[h]
      \begin{center}
         \includegraphics[width = 0.15\textwidth]{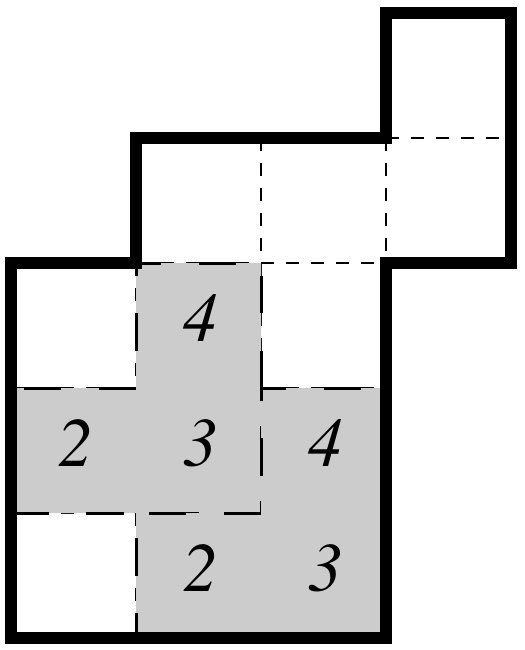}
      \end{center}
   \caption{Clones labeled by $2$-$3$-$4$ in $[P,Q]$}
   \label{clone}
   \end{figure} 
   
   For some subregion $[\lambda, \mu]$ of the region $[P,Q]$ and 
   some tiling $\tau$ of $[\lambda, \mu]$, 
   where $\lambda$ and $\mu$ are lattice paths from $(0,0)$ to $(m,r)$ in $[P,Q]$ 
   with $\lambda$ never going above $\mu$, 
   we define a \emph{block-tiled region} (abbreviated BTR) $[\lambda, \mu]_{\tau}$ as follows: 
      \begin{enumerate}
      \item
       Each maximal continuous intersection of $\lambda$ and $\mu$ passes 
       through an outside corner or an end point of $[P,Q]$.
      \item
      $\tau$ uses blocks as tiles. 
      That means the set of all the unit boxes in $[\lambda, \mu]$ is covered 
      by blocks without gaps or overlaps.
      \item
      If two blocks have a same-labeled-box in $[\lambda, \mu]_{\tau}$, 
      they are clones to each other.
      \end{enumerate}
   
   \noindent
   Note that a block-tiled region is not always defined for any subregion or any tilling.
   For two block-tiled regions $[\lambda, \mu]_{\tau}$ and $[\lambda', \mu']_{\tau'}$, 
   we say $[\lambda, \mu]_{\tau}$ covers $[\lambda', \mu']_{\tau'}$ 
   if $[\lambda, \mu]_{\tau}$ can be obtained by 
   attaching a clone of a block in $[\lambda', \mu']_{\tau'}$ below $\lambda'$ or above $\mu'$.
   If a block-tiled region is not covered by any other, it is called a \emph{maximal block-tiled region}.

   \begin{lemma}
   \label{faces-regions}
      There is a one-to-one correspondence 
      between the set of all the faces of $\mathcal{P}(M[P,Q])$
      and the set of all the maximal block-tiled regions inside $[P,Q]$.
   \end{lemma}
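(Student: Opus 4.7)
The plan is to establish the bijection by an inductive descent on codimension, leveraging Proposition~\ref{facets-of-matroid-polytope-general-case} together with the border-strip correspondence from Theorem~\ref{faces(m+r-1-l)-of-matroid-polytope}. Since every face of $\mathcal{P}(M[P,Q])$ is a facet of a face one dimension higher, iterated application of Proposition~\ref{facets-of-matroid-polytope-general-case} shows that each face arises from a sequence of deletions, contractions, and direct sums of $M[P,Q]$; the content of the lemma is to package these sequences canonically as maximal block-tiled regions.

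In the direction from BTRs to faces, I would proceed as follows. Given a maximal BTR $[\lambda, \mu]_{\tau}$, the maximal continuous intersections of $\lambda$ and $\mu$ pass through outside corners of $[P,Q]$ and specify which direct sums to apply; boxes of $[P,Q]$ lying below $\lambda$ correspond to contractions (North steps), and boxes lying above $\mu$ correspond to deletions (East steps). After these operations the residual matroid is a direct sum $\bigoplus_{k} M[\lambda^{(k)},\mu^{(k)}]$ indexed by the connected components of $[\lambda,\mu]$, each of which is a border strip because the clone condition on $\tau$ rules out any $2\times 2$ square at the scale of distinct labels within a single component. On each such border strip the tiling $\tau$ restricts to a block decomposition that, via Theorem~\ref{faces(m+r-1-l)-of-matroid-polytope}, picks out a subset of $\mathcal{D}(R^{(k)}) \cupdot \mathcal{C}(R^{(k)}) \cupdot \mathcal{S}(R^{(k)})$ satisfying conditions (C1)--(C3), hence a face of $\mathcal{P}(M[\lambda^{(k)},\mu^{(k)}])$; the product of these faces inside $\mathcal{P}(M[P,Q])$ is the face associated to $[\lambda,\mu]_\tau$.

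For the reverse direction, starting from a face $F$ I would repeatedly pick a facet containing $F$, apply the corresponding deletion/contraction/direct sum from $\mathcal{D}(P,Q) \cupdot \mathcal{C}(P,Q) \cupdot \mathcal{S}(P,Q)$, and recurse; the procedure terminates once the residual matroid is a direct sum of border-strip lattice path matroids whose own facial description is covered by the previous section. Reading off the surviving region $[\lambda,\mu]$ and the block decomposition of its border-strip components yields a BTR. The ``covers'' relation on BTRs is precisely designed to absorb the ambiguity in this procedure: if two sequences of operations produce the same face but via different intermediate choices, the resulting BTRs differ only by attaching or detaching a clone of a block along $\lambda$ or $\mu$, so both lie in the same equivalence class with a unique maximal representative.

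The main obstacle will be verifying that these two assignments are well-defined modulo the cover-by-clones relation and mutually inverse. Concretely, I must check that interchanging a block with one of its clones does not alter the matroid constraints defining the face (which in turn justifies why maximality is the correct canonical form), and that the facet-descent procedure never produces a BTR outside the allowed class regardless of the order in which one peels off facets. This reduces to a careful bookkeeping of how the basis exchange axiom transports labels between clones inside $[P,Q]$, together with the observation already implicit in Proposition~\ref{facets-of-matroid-polytope-general-case} that the choice of border strip $R$ used to certify a deletion or contraction is exactly encoded by the block containing the corresponding box.
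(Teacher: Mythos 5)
Your overall strategy---read the direct sums off the maximal continuous intersections, the deletions and contractions off the discarded part of $[P,Q]$, and use maximality to canonicalize the non-unique generating set of constructions---is in the same spirit as the paper's proof, which does exactly this but phrases each operation as a removal of labelled \emph{steps} inside $[P,Q]$ (an $i$-contraction removes all East steps labelled $i$, an $i$-deletion all North steps labelled $i$, a $(p,q)$-direct sum all steps strictly northwest or southeast of $(p,q)$) and then observes that what survives is a maximal block-tiled region. However, your route through Section~\ref{sec-border strips} contains a genuine error: the connected components $[\lambda^{(k)},\mu^{(k)}]$ of a \emph{maximal} block-tiled region are not border strips. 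Maximality is achieved precisely by stacking clones of blocks on top of one another, so $[\lambda,\mu]$ typically does contain $2\times 2$ squares (Figure~\ref{fig-regions-bottoms}(b) is the standard example); the object with no $2\times 2$ squares is the block-tiled \emph{bottom} $[\lambda,\nu]_\tau$, which the paper only introduces after this lemma. Hence Theorem~\ref{faces(m+r-1-l)-of-matroid-polytope} cannot be applied to the components as you state, and your phrase ``rules out any $2\times 2$ square at the scale of distinct labels'' is doing the work of an identification (region modulo clones $\leftrightarrow$ border strip) that is never defined. Even after repairing this by working with the bottom band, you would still owe an argument that the face of $\mathcal{P}(M[P,Q])$ cut out by operations supported on a sub-border-strip of $[P,Q]$ is canonically a face of the border-strip polytope of Section~\ref{sec-border strips}; the labels of a sub-border-strip of a thick skew shape recur in other blocks, so conditions (C1)--(C3) do not transfer verbatim.

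A second, smaller point: your dictionary ``boxes below $\lambda$ $\mapsto$ contractions, boxes above $\mu$ $\mapsto$ deletions'' is not correct as stated (in Figure~\ref{i-R-C-D-LPM} both the $2$-deletion and the $4$-contraction remove boxes adjacent to $Q$); the operation is determined by which \emph{steps} with a given label are killed, not by which side of the residual region a box lies on. Finally, the issue you flag as ``the main obstacle''---that the generating set of constructions is not unique and that swapping a block for a clone must not change the face---is indeed the actual content of the lemma, and your proposal names it without resolving it. To be fair, the paper's own proof is equally terse on this point (``one can easily check''), but since your argument additionally routes through a false intermediate claim, it does not currently constitute a proof.
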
   
   
   \begin{proof}
      For a face $\sigma$ of $\mathcal{P}(M[P,Q])$, take a set of matroid constructions by which $\sigma$ is obtained from $\mathcal{P}(M[P,Q])$.
      If $(p,q)$-direct sum is in the set where $(p,q)$ is an outside corner of $P$ or $Q$, 
      we remove all the steps inside $[P,Q]$ which lie strictly northwest or southeast of the point $(p,q)$ respectively.
      Also, if $i$-contraction or $i$-deletion is in the set, 
      all the East steps or North steps with the label $i$ inside $[P,Q]$ are removed respectively.
      After removing all the steps corresponding to constructions in the set, 
      if we delete remaining steps not on the connected lattice paths from $(0,0)$ to $(m,r)$ lastly, 
      we end up with a maximal block-tiled region $[\lambda, \mu]_{\tau}$ inside $[P,Q]$.
      Conversely, if some operations in $\mathcal{D}(P,Q) \cupdot \mathcal{C}(P,Q) \cupdot \mathcal{S}(P,Q)$, 
      which should be used to get the maximal block-tiled region $[\lambda, \mu]_{\tau}$ from $[P,Q]$, 
      are used to the polytope $\mathcal{P}(M[P,Q])$,
      one can easily check that the obtained corresponding face is the face $\sigma$.
   \end{proof}

   A \emph{block-tiled band} is a block-tiled region containing no $2 \times 2$ squares.
   We say block-tiled bands inside $[P,Q]$ are in the same \emph{family}
   if the sets of maximal continuous intersections and clones constituting them are the same.
   A \emph{block-tiled bottom} is a block-tiled band 
   which is the lowest one among block-tiled bands in its family.   
   Note that a block-tiled band bordered by $\lambda$ inside a maximal block-tiled region $[\lambda, \mu]_{\tau}$ 
   is a block-tiled bottom.
   We say that two blocks are \emph{adjacent} if they share a step on their boundaries. 
   Note that, if two adjacent blocks in a block-tiled region $[\lambda, \mu]_{\tau}$ share two or more steps, 
   they are clones each other.
   A block in a block-tiled band can be adjacent at most two other blocks. 
   
   There is a one-to-one correspondence 
   between the set of all the maximal block-tiled regions inside $[P, Q]$
   and the set of all the block-tiled bottoms inside $[P, Q]$.
   For a given maximal block-tiled region $[\lambda, \mu]_{\tau}$ inside $[P, Q]$, 
   one can remove all the clones except the lowest ones 
   to get a block-tiled bottom $[\lambda, \nu]_{\tau}$ inside $[P, Q]$
   where we identity $\lambda$ with the lower bounding path of 
   the Young diagram of $[\lambda, \mu] (= \lambda \setminus \mu)$, and  
   $\mu$ and $\nu$ with upper bounding paths of 
   the Young diagrams of $[\lambda, \mu]$ and $[\lambda, \nu]$ respectively.
   The inverse is obtained by inserting 
   all the clones of each block in the block-tiled bottom $[\lambda, \nu]_{\tau}$ 
   into a region $[\nu, \mu]$ 
   where $\mu$ is the upper bounding path of the highest block-tiled band among the family members of $[\lambda, \nu]_{\tau}$.
   See Example~\ref{eg-regions-bottoms}.
   Hence, there is a one-to-one correspondence 
   between the set of all the faces of $\mathcal{P}(M[P,Q])$ and the set of all the block-tiled bottoms inside $[P,Q]$
   by Lemma~\ref{faces-regions}.
   
   \begin{example}
      Let $P = E^3N^3EN^2$ and $Q = N^3ENE^2NE$.      
      The shaded block-tiled region shown in Figure~\ref{fig-regions-bottoms}(a)
      is a block-tiled bottom inside $[P,Q]$.            
      Since the block-tiled region in Figure~\ref{fig-regions-bottoms}(b) is obtained by 
      inserting the clone of the block labeled by $2$-$3$-$4$ in the shaded block-tiled bottom,
      it covers the block-tiled region in Figure~\ref{fig-regions-bottoms}(a).
      Hence, the block-tiled region in Figure~\ref{fig-regions-bottoms}(a) is not maximal.       
      If we also insert the clone of the single block labeled by $5$ as in Figure~\ref{fig-regions-bottoms}(c),
      the upper bound of the region in Figure~\ref{fig-regions-bottoms}(c) is not a lattice path.
      Hence, the region in Figure~\ref{fig-regions-bottoms}(c) is not a skew shape, and 
      not a block-tiled region.       
      Therefore, the striped block-tiled band in Figure~\ref{fig-regions-bottoms}(b) is 
      the highest family member of the shaded block-tiled bottom, and
      the blocked-tiled region in Figure~\ref{fig-regions-bottoms}(b) is
      the maximal block-tiled region corresponding to the shaded block-tiled bottom.     
   \label{eg-regions-bottoms}
   \end{example}
            
   \begin{figure}[h]
      \begin{center}
         \begin{tabular}{ccccc}
            \includegraphics[width = 0.15\textwidth]{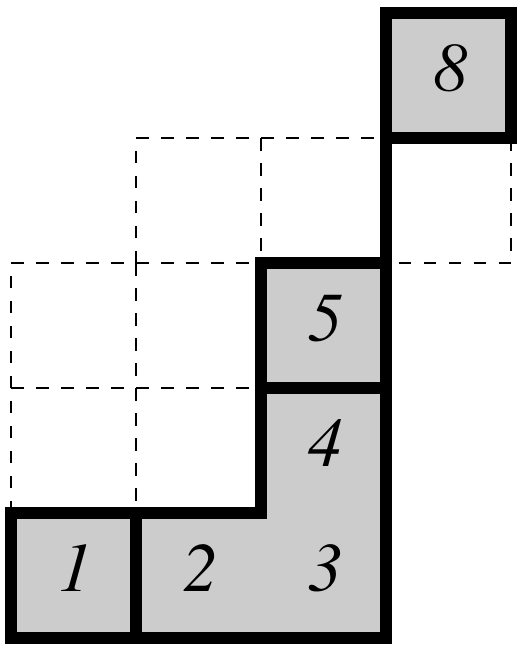} & &
            \includegraphics[width = 0.15\textwidth]{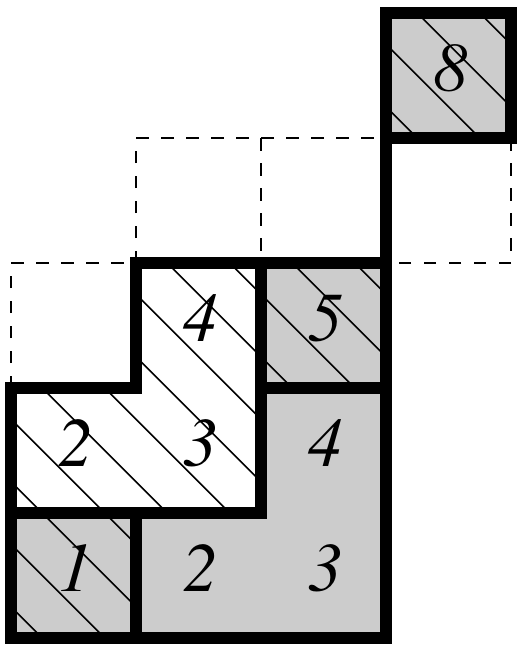} & &
            \includegraphics[width = 0.15\textwidth]{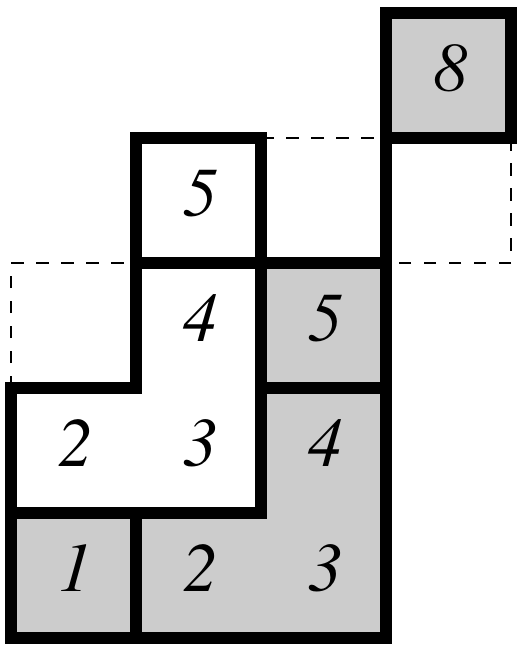} \\
            \footnotesize{(a) Not maximal} & &
            \footnotesize{(b) Maximal block-tiled region} & &
            \footnotesize{(c) Not a skew shape}
         \end{tabular}
      \end{center}  
   \caption{Correspondence between block-tiled bottoms and maximal block-tiled regions}
   \label{fig-regions-bottoms}
   \end{figure}
      
   The next proposition describes the covering relation in the face poset of $\mathcal{P}(M[P,Q])$ 
   in terms of block-tiled bottoms. 
   The proof is straightforward and is omitted.
   
   \begin{proposition}
   \label{covering-relation}
      Codimension $1$ subfaces of an $n$-dimensional face corresponding to 
      a block-tiled bottom $[\lambda, \nu]_\tau$ inside $[P,Q]$ are obtained as follows:
      
      \begin{enumerate}
      \item[(1)]
      (Direct sum at an outside corner)
      For an outside corner $(p,q)$ of $[P,Q]$,
      let $[P,Q]_{(p,q)}$ be the subregion of $[P,Q]$ 
      which can be obtained after $(p,q)$-direct sum works to $[P,Q]$. 
      If there exists a family member $f$ of $[\lambda, \nu]_\tau$ such that 
      $f \setminus [P,Q]_{(p,q)}$ consists of a single block containing 
      a starting point $(p,*)$, an ending point $(*,q)$, and an outside corner $(p,q)$,
      then one can take the lowest one among such $f$'s and 
      remove the single block keeping the lattice path through $(p,*)$, $(p,q)$, and $(*,q)$.
      See Example~\ref{eg-direct-sum-upper}.

      \item[(2)]
      (Deletion of a block) 
      Let $i$ be the smallest box label of a block $B_1$ in $[\lambda, \nu]_\tau$.
      If $B_1$ is adjacent to only one block $B_2$ whose labels are bigger than those of $B_1$, 
      delete $B_1$ from the family members of $[\lambda, \nu]_\tau$
      keeping the perimeter of $B_1$ from $(0,0)$ to the starting point of the clone of $B_2$
      so that new block-tiled bands with one less blocks than $[\lambda, \nu]_\tau$ are obtained.
      Note that the new bands fall into at most two types of families.
      One can get the block-tiled bottoms corresponding to $(n-1)$-dimensional subfaces
      by taking the lowest ones by families.         
      If the perimeter begins with an East step (a North step), 
      the obtained bottom corresponds to the $i$-deletion (respectively, $i$-contraction).
      See Figure~\ref{fig-deletion-contraction}(c) and~\ref{fig-deletion-contraction}(d) 
      in Example~\ref{eg-deletion-contraction}.
      If $B_1$ is adjacent to only one block whose labels are smaller than those of $B_1$,
      similar operations can be done.
            
      If $B_1$ is not adjacent to any other block, 
      one can either replace $\lambda$ by removing all the boxes of $B_1$,
      or replace $\nu$ by adding all the boxes of $B_1$.
      This corresponds to the $i$-contraction or the $i$-deletion, respectively.
      See Figure~\ref{fig-deletion-contraction}(e) and~\ref{fig-deletion-contraction}(f) 
      in Example~\ref{eg-deletion-contraction}.
      \item[(3)]
      (Merge of adjacent blocks)
      If two blocks $B_1$ and $B_2$ in $[\lambda, \nu]_\tau$ 
      are adjacent along a step in this order, 
      one can merge $B_1$ and the clone of~$B_2$ in the family members of $[\lambda, \nu]_\tau$
      by deleting the step between $B_1$ and the clone of $B_2$
      so that new block-tiled bands with one less blocks than $[\lambda, \nu]_\tau$ are gained.
      Note that the new bands also fall into at most two types of families.
      One can obtain the block-tiled bottoms corresponding to $(n-1)$-dimensional subfaces
      by taking the lowest ones by families.
      If the deleted step is a North step (an East step), 
      this corresponds to the $i$-deletion (respectively, $i$-contraction). 
      See Figure~\ref{fig-deletion-contraction}(g) and~\ref{fig-deletion-contraction}(h) 
      in Example~\ref{eg-deletion-contraction}.
      \end{enumerate}
   \label{prop-cover-relation}   
   \end{proposition}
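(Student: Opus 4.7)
The plan is to view the face $\sigma$ as a matroid base polytope in its own right and apply Proposition~\ref{facets-of-matroid-polytope-general-case} to it, then translate each elementary operation (deletion, contraction, direct sum) back into the language of block-tiled bottoms. More precisely, if $\sigma$ corresponds to the block-tiled bottom $[\lambda,\nu]_\tau$, equivalently to the maximal block-tiled region $[\lambda,\mu]_\tau$ obtained by adjoining all clones, then $\sigma$ itself is the polytope of a matroid $M_\sigma$ that is a direct sum of connected lattice path matroids---one for each clone-equivalence class of blocks in $\tau$---and its underlying region is the disjoint union of these blocks. By Proposition~\ref{facets-of-matroid-polytope-general-case} applied to $M_\sigma$, the codimension one subfaces of $\sigma$ are indexed by the $i$-deletions and $i$-contractions along the steps of the border strips bordering the blocks, together with the $(p,q)$-direct sums at outside corners of this compound region.

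Next, I match each such elementary operation with one of the three cases in the statement. A $(p,q)$-direct sum on $M_\sigma$ produces a new codimension one subface only when $(p,q)$ is a point at which some family member $f$ of $[\lambda,\nu]_\tau$ can be cleanly split, i.e., $f\setminus [P,Q]_{(p,q)}$ consists of a single block containing an outside corner at $(p,q)$; taking the lowest such $f$ gives operation (1). An $i$-deletion or $i$-contraction acts on a single step labeled $i$; in the compound region this step lies either on the outer perimeter of a block $B_1$ that is adjacent to at most one other block $B_2$, in which case iterating the operation along the perimeter of $B_1$ corresponds to removing $B_1$ as in operation (2), or on the shared boundary of two adjacent blocks $B_1$ and $B_2$, in which case it merges them as in operation (3). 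In both situations, the East-versus-North direction of the removed step determines whether the operation is an $i$-deletion or $i$-contraction, consistently with the conventions stated in (2) and (3).

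The main obstacle is the careful bookkeeping of clones and the canonical choice of ``lowest'' family member. Because clones are identical as tableaux, a single matroid operation on $M_\sigma$ corresponds to a common operation on every clone simultaneously, and I must verify that selecting the lowest family member in (1), and taking the lowest band by families in (2) and (3), always outputs a genuine block-tiled bottom---so that the map from elementary operations on $M_\sigma$ to block-tiled bottoms is well-defined and bijective onto subfaces. A related subtlety arises in the isolated-block subcase of (2), where $B_1$ is adjacent to nothing; I need to check that replacing $\lambda$ by deleting $B_1$, respectively replacing $\nu$ by absorbing $B_1$, corresponds exactly to the $i$-contraction, respectively the $i$-deletion, at the relevant label. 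Once these local verifications are in place, the bijection between codimension one subfaces and the outputs of (1)--(3) follows immediately from Proposition~\ref{facets-of-matroid-polytope-general-case}.
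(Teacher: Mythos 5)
The paper omits its own proof of this proposition as ``straightforward,'' so the question is whether your plan would actually carry through. Your overall strategy --- regard the face $\sigma$ as a matroid base polytope, describe its facets by the earlier facet results, and translate back into block-tiled bottoms --- is the natural one and matches the machinery the paper sets up in Lemma~\ref{faces-regions}. But your foundational structural claim is false: the connected components of $M_\sigma$ do \emph{not} correspond to clone-equivalence classes of blocks, and the underlying region of $\sigma$ is not a disjoint union of blocks. By Corollary~\ref{faces-using-border-strips} the number of blocks in the bottom equals $\dim\sigma$, while by Proposition~\ref{dim-of-matroid-polytope} the dimension is the ground-set size minus the number of components; already $\mathcal{P}(M[P,Q])$ itself is connected yet its bottom consists of $m+r-1$ single-box blocks (one clone class per label). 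Concretely, for $P=E^3N$, $Q=NE^3$ the $1$-deletion face is a triangle whose bottom has two single-box blocks in two clone classes; your decomposition would force this face to be a product of two segments, i.e.\ a square. The components are instead governed by the direct-sum structure: they correspond to the maximal bands of $[\lambda,\mu]_\tau$ lying between consecutive maximal intersections of $\lambda$ and $\mu$ (each passing through an outside corner or an endpoint of $[P,Q]$), and each such band, with clones identified, is a \emph{border strip} containing possibly many blocks --- so it is Lemma~\ref{facets-of-matroid-polytope}, applied to each band, rather than Proposition~\ref{facets-of-matroid-polytope-general-case} applied to a ``disjoint union of blocks,'' that governs the codimension-one subfaces.

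Even after correcting the decomposition, note that everything you defer to ``local verifications'' is the entire content of the proposition: that a deletion or contraction on a band either peels off an end block (case (2)) or deletes a shared step and merges two adjacent blocks (case (3)); that a direct sum of a band splits it at an outside corner of $[P,Q]$ as in case (1); that the East/North conventions in (2) and (3) come out as stated; and that taking lowest family members always returns a genuine block-tiled bottom. None of this ``follows immediately'' from the facet description, and your proposal, as written, both starts from an incorrect component structure and leaves the substantive matching unproved.
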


   \begin{example}
      For the region $[P,Q]$ where $P = E^5N^2E^2NE^2N^4$ and $Q = N^4E^4N^3E^5$, 
      one can have an outside corner $(4,4)$ of $Q$ and a block-tiled bottom 
      as in Figure~\ref{fig-direct-sum-upper}(a).
      Note that the maximal block-tiled region corresponding to the block-tiled bottom 
      in Figure~\ref{fig-direct-sum-upper}(b)
      consists of all the family members of the given bottom. 
      The shaded block-tiled band shown in Figure~\ref{fig-direct-sum-upper}(c)
      is the lowest family member satisfying conditions 
      in case (1) of Proposition~\ref{prop-cover-relation}. 
      One can obtain the new block-tiled bottom from the lowest family member
      after removing the $8$-labeled block 
      in Figure~\ref{fig-direct-sum-upper}(d).
   \label{eg-direct-sum-upper}
   \end{example}
   
   \begin{figure}[h] 
      \begin{center}
         \begin{tabular}{ccc}
            \includegraphics[width = 0.3\textwidth]{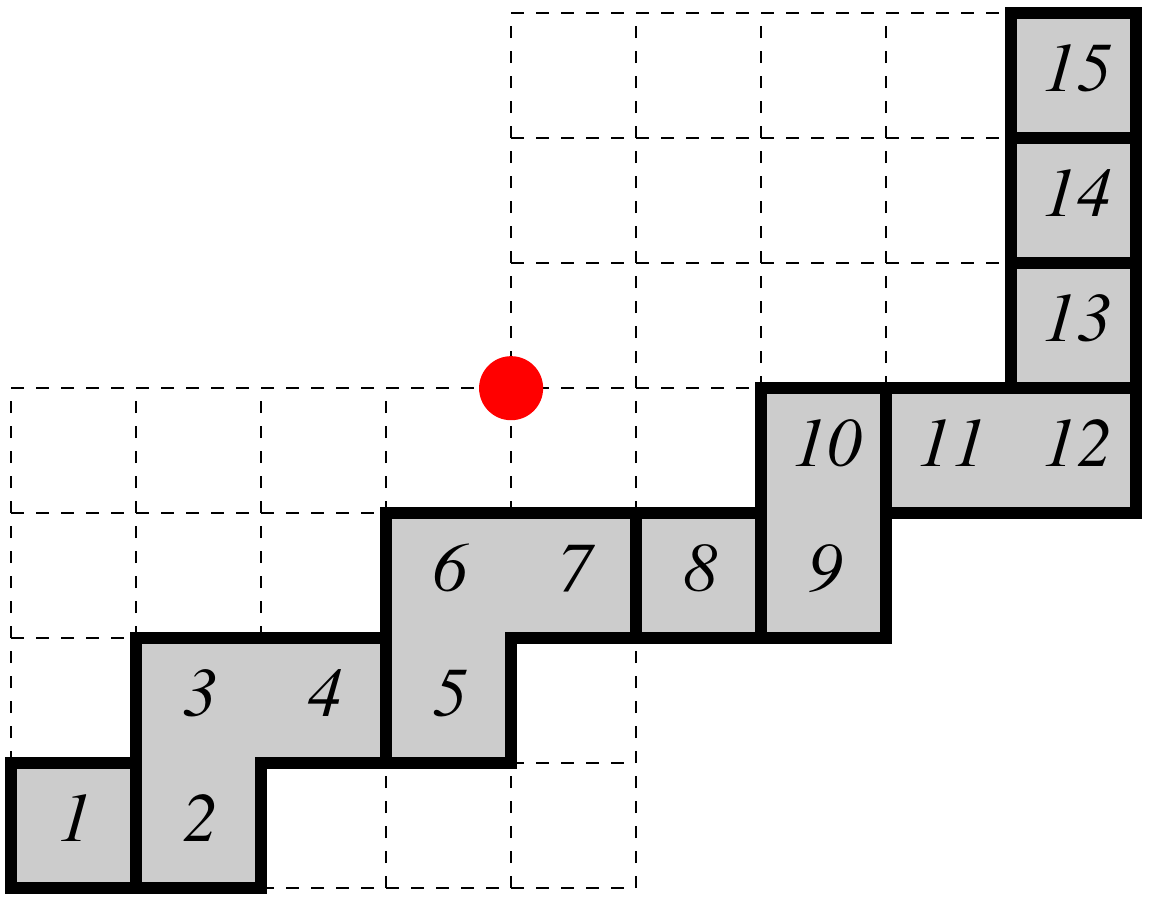}& \hspace{1cm} &
            \includegraphics[width = 0.3\textwidth]{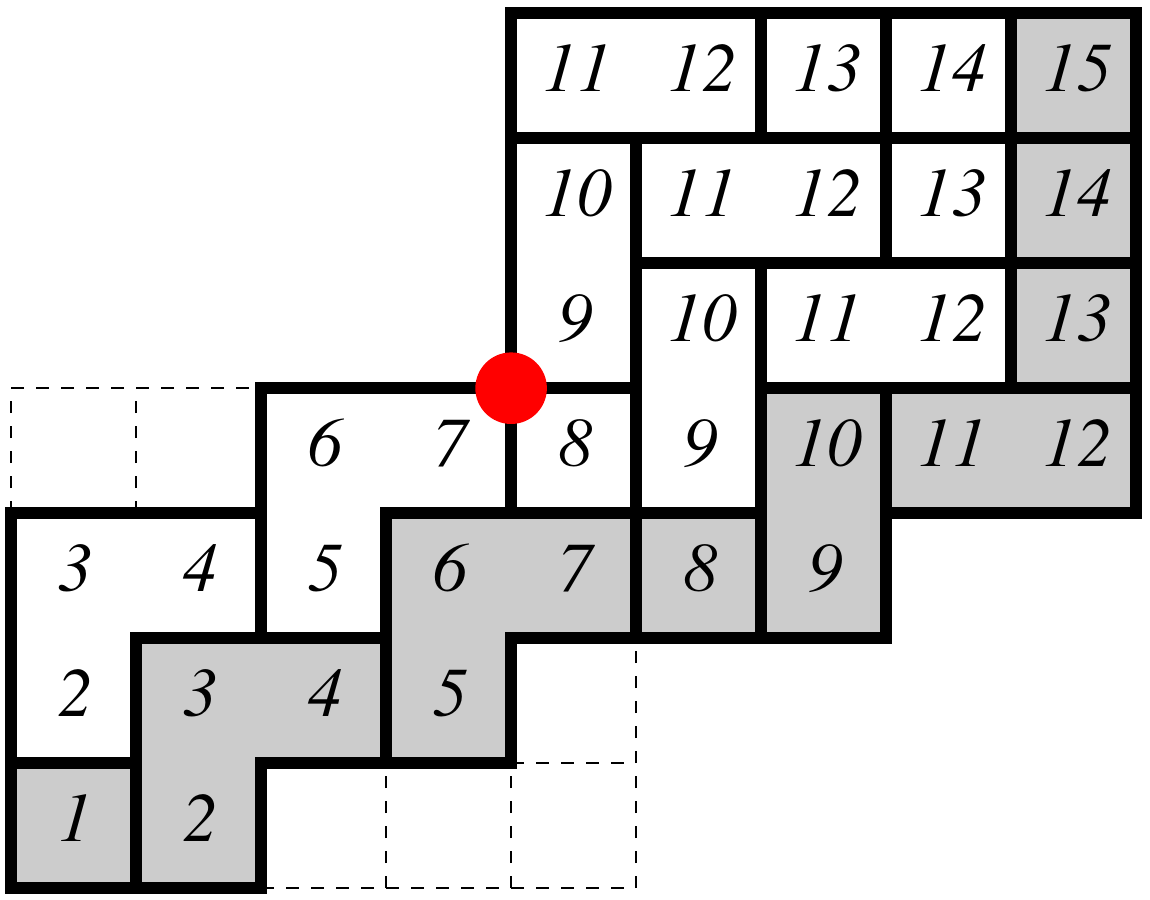}\\
            \footnotesize{(a) Block-tiled bottom} & &
            \footnotesize{(b) Maximal block-tiled region}\\\\
            \includegraphics[width = 0.3\textwidth]{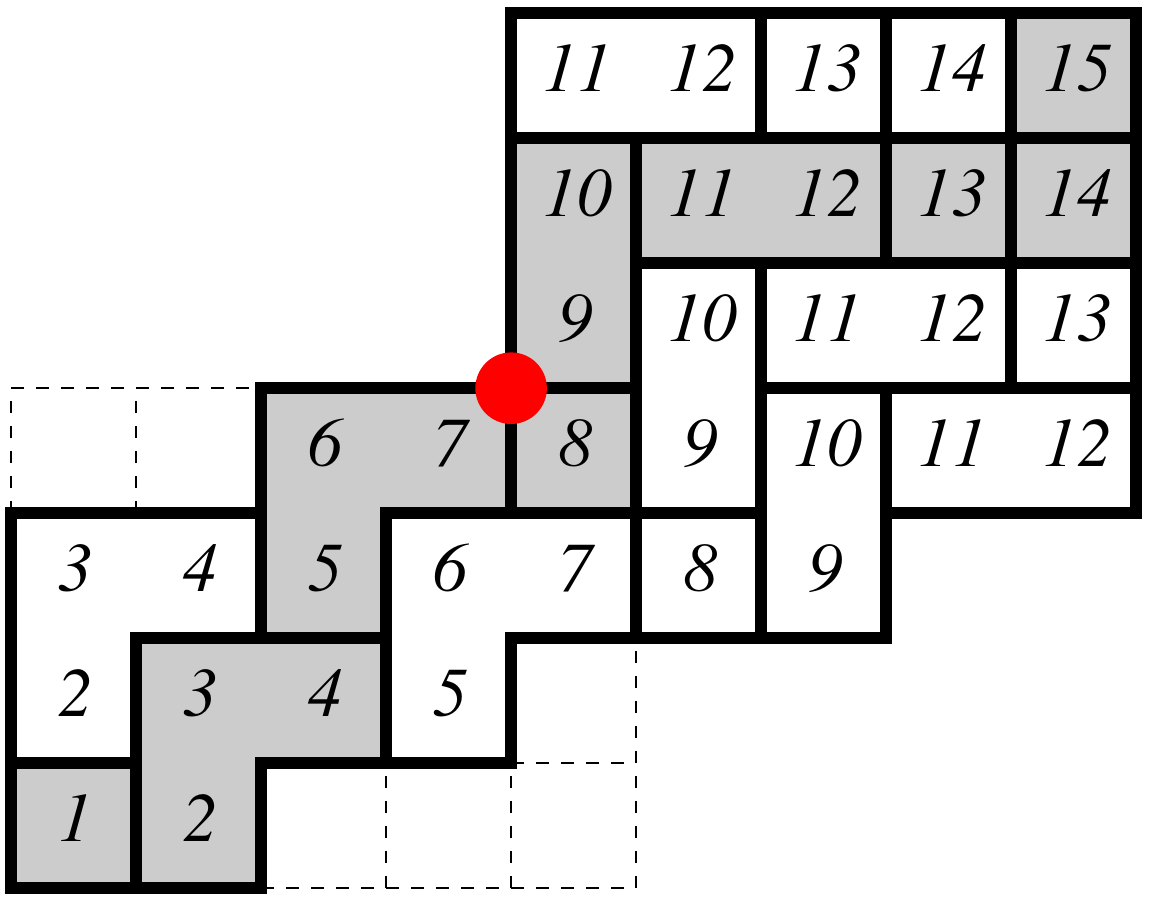}& \hspace{1cm} &
            \includegraphics[width = 0.3\textwidth]{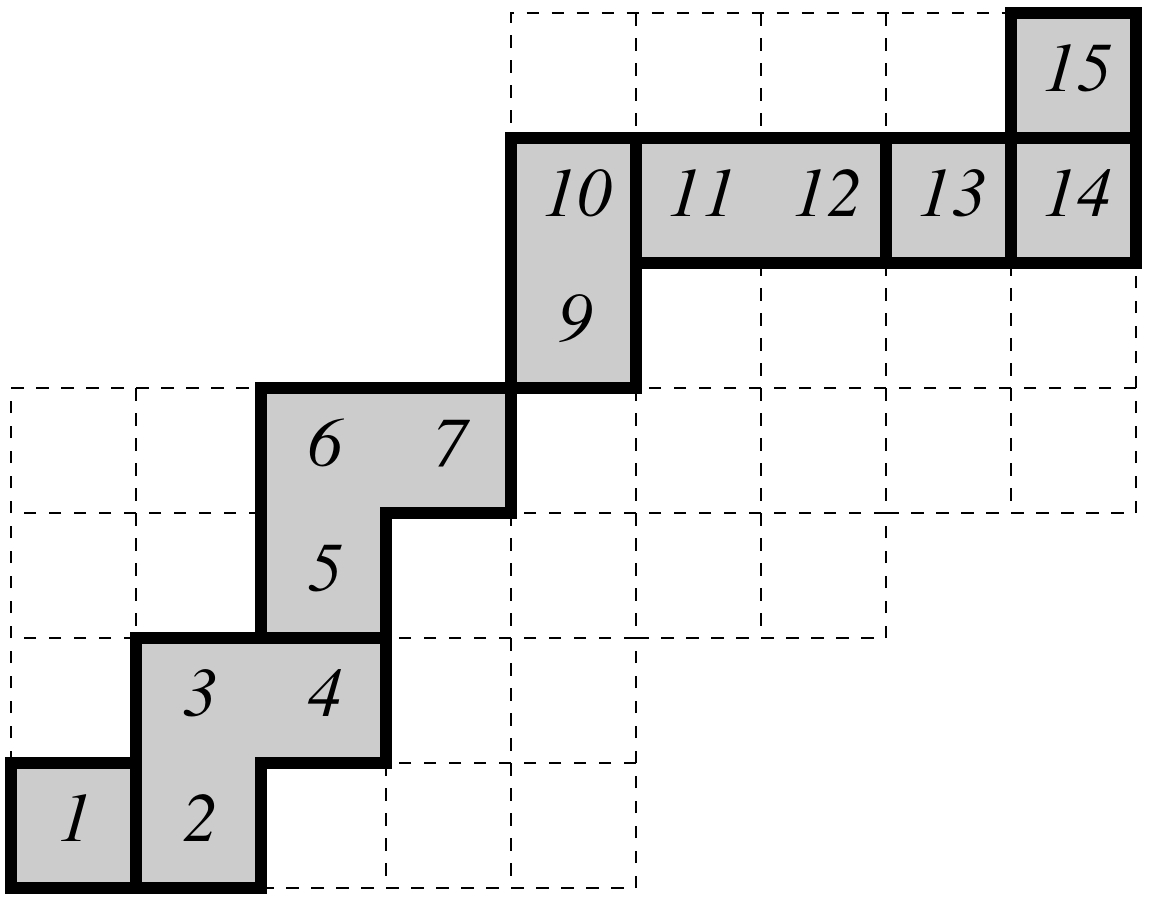}\\
            \footnotesize{(c) Lowest family member}& &
            \hspace{1cm}\footnotesize{(d) Block-tiled bottom} \qquad \quad \quad\\
            \hspace{0.7cm}\footnotesize{as in Proposition~\ref{prop-cover-relation}(1)}& &
            \hspace{1cm}\footnotesize{after $(4,4)$-direct sum}
         \end{tabular}
      \end{center}
   \caption{Block-tiled bottoms corresponding to faces of $\mathcal{P}(M[P,Q])$}
   \label{fig-direct-sum-upper}
   \end{figure}
      
   \begin{example}
      Let $P = E^2NE^3N^4EN$ and $Q = N^3EN^2E^2NE^3$.
      Figure~\ref{fig-deletion-contraction}(a) shows 
      the maximal block-tiled region for a $7$-dimensional face of $\mathcal{P}(M[P,Q])$.
      The maximal block-tiled region for the face obtained from $(5,5)$-direct sum from this face
      is shown in Figure~\ref{fig-deletion-contraction}(b).
      Figures~\ref{fig-deletion-contraction}(c)-(h) show 
      codimension $1$ faces of the $6$-dimensional face corresponding to 
      the maximal block-tiled region shown in Figure~\ref{fig-deletion-contraction}(b).
      They are examples for cases (2) and (3) of Proposition~\ref{covering-relation}.
      \label{eg-deletion-contraction}
   \end{example}

   \begin{figure}[h]
      \begin{center}
         \begin{tabular}{ccccccc}
            \includegraphics[width = 0.13\textwidth]{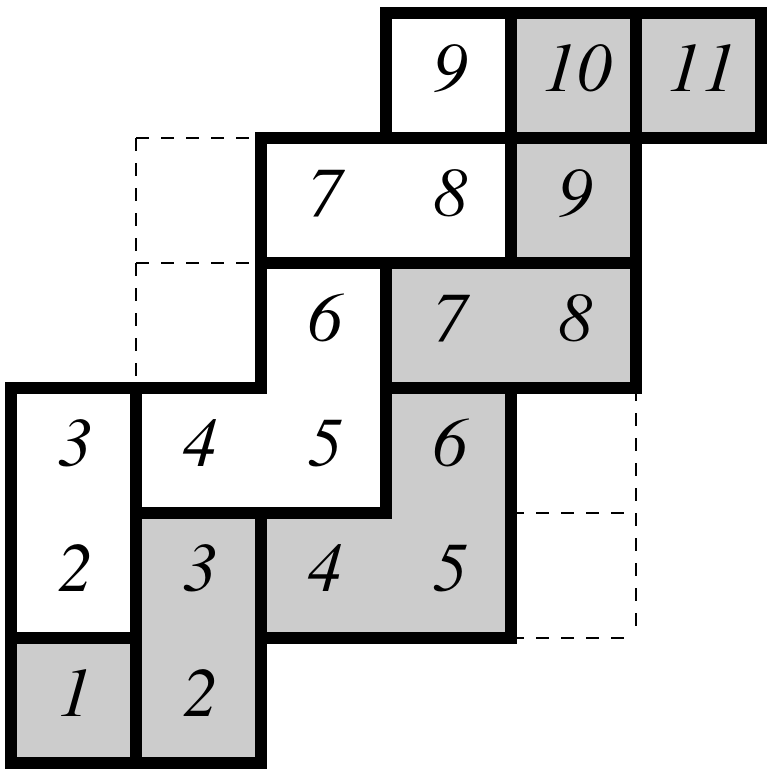}&  \hspace{0.05cm} &
            \includegraphics[width = 0.13\textwidth]{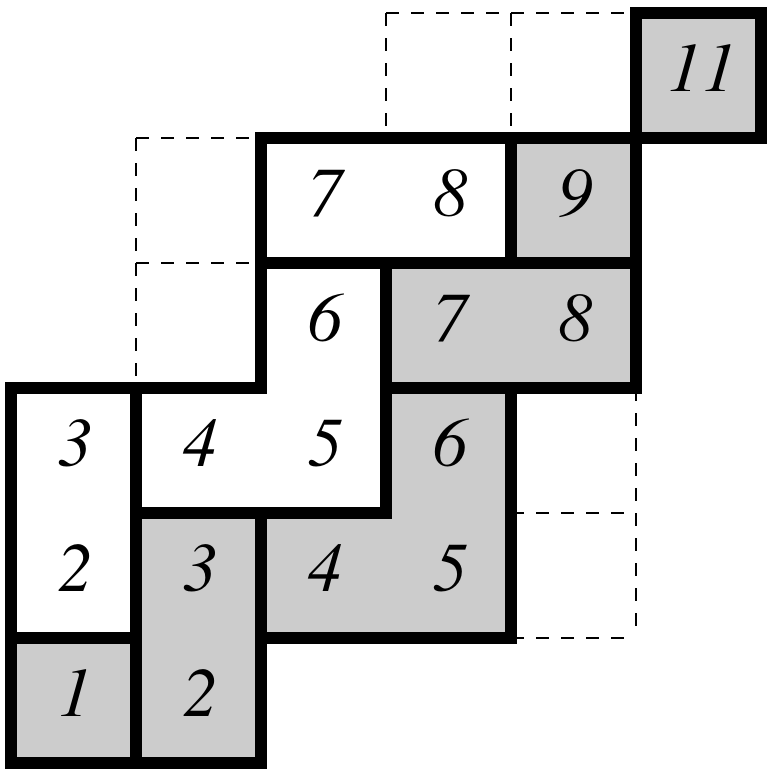}&  \hspace{0.05cm} &
            \includegraphics[width = 0.13\textwidth]{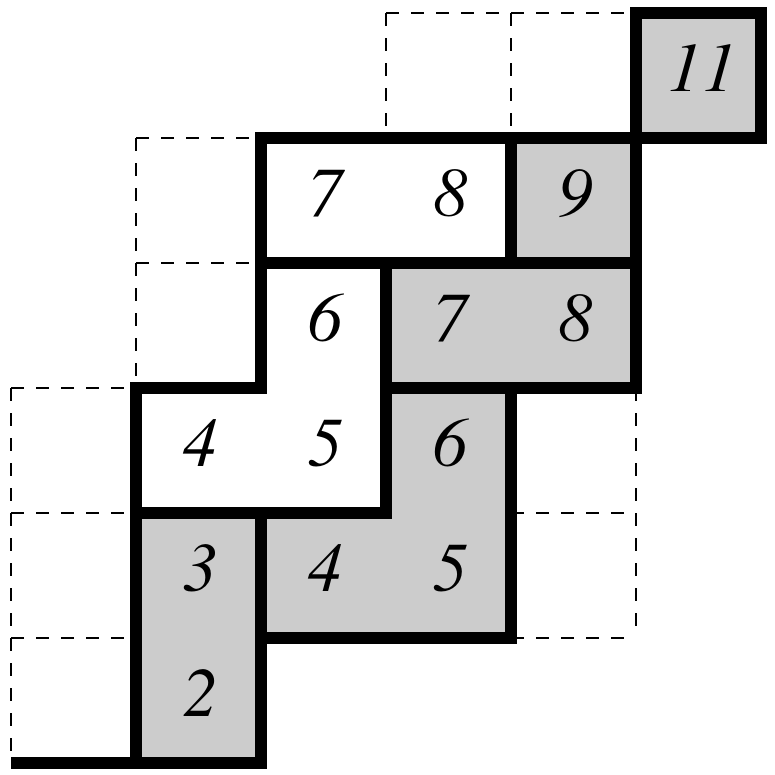}& \hspace{0.05cm}  &
            \includegraphics[width = 0.13\textwidth]{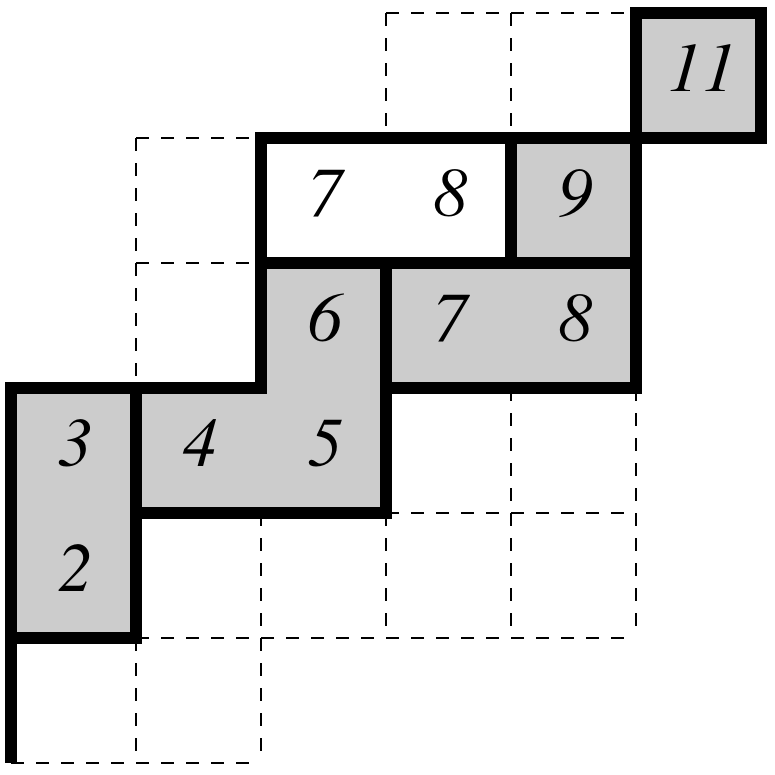}\\
            \footnotesize{(a) A maximal BTR} & & 
            \footnotesize{(b) $(5,5)$-direct sum} & & 
            \footnotesize{(c) $1$-deletion} & & 
            \footnotesize{(d) $1$-contraction}\\
            \includegraphics[width = 0.13\textwidth]{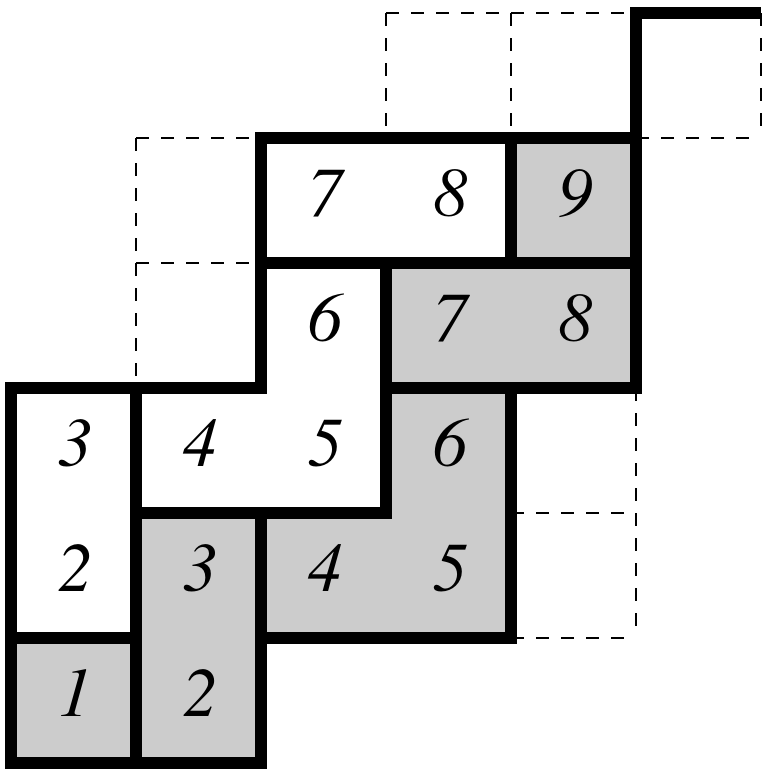}&  &
            \includegraphics[width = 0.13\textwidth]{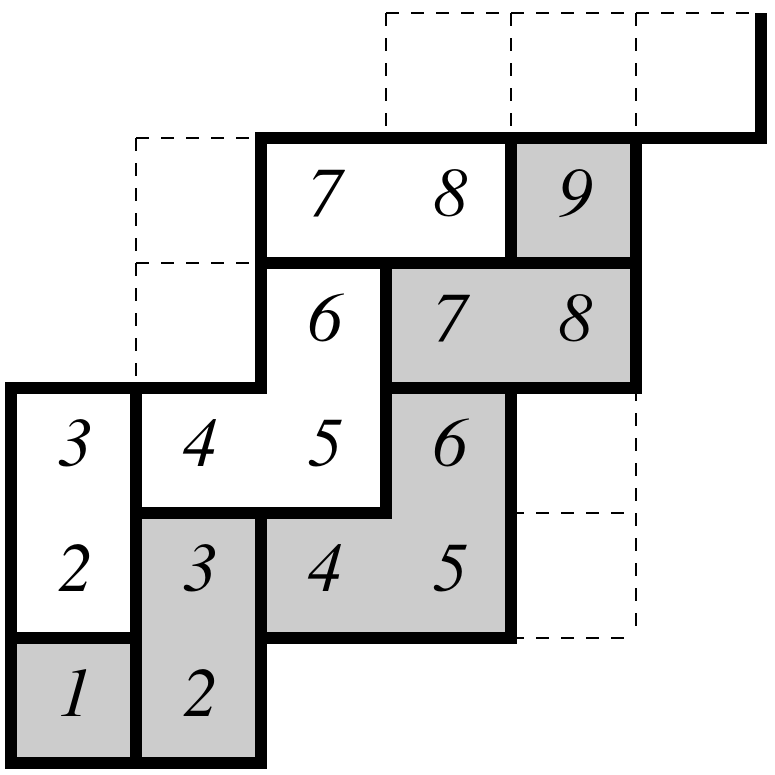}&  &
            \includegraphics[width = 0.13\textwidth]{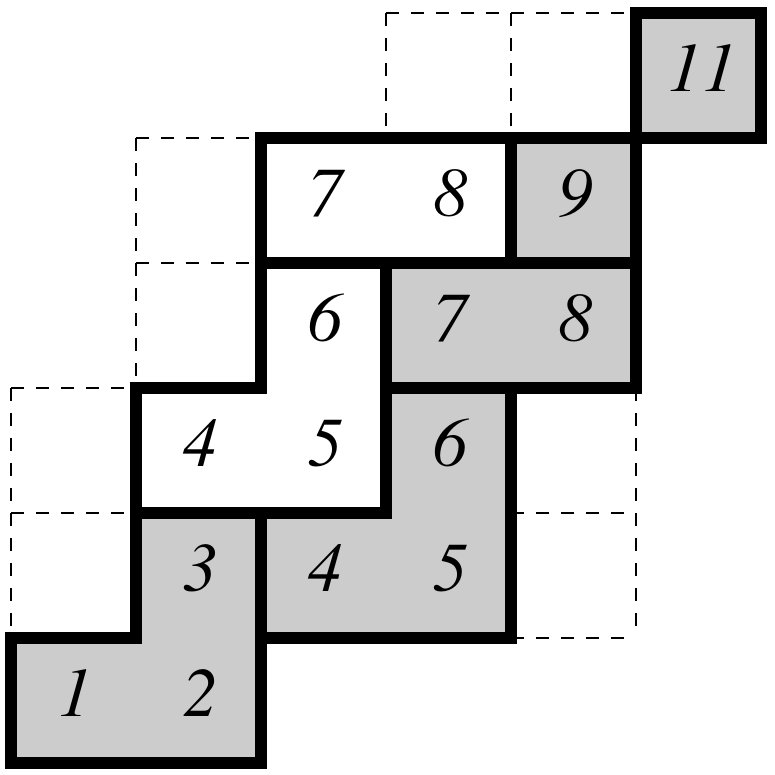}&  &
            \includegraphics[width = 0.13\textwidth]{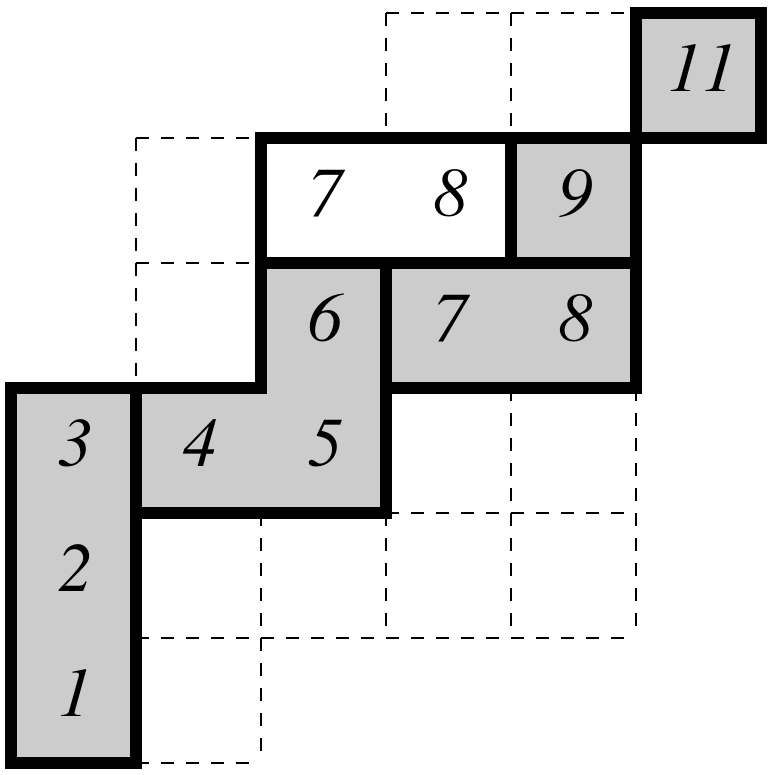}\\
            \footnotesize{(e) $11$-contraction} & & 
            \footnotesize{(f) $11$-deletion} & & 
            \footnotesize{(g) $2$-deletion} & & 
            \footnotesize{(h) $2$-contraction}\\
         \end{tabular}
      \end{center}
   \caption{Direct sum/Deletion/Contraction}
   \label{fig-deletion-contraction}
   \end{figure}
   
   The following corollary follows from Proposition~\ref{covering-relation}.
   
   \begin{corollary}
   \label{faces-using-border-strips}
      All the $n$-dimensional faces of the polytope $\mathcal{P}(M[P,Q])$ are corresponding to 
      the block-tiled bottoms with $n$ blocks inside the region $[P,Q]$.
   \end{corollary}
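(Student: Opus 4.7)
The plan is to prove this by downward induction on the dimension of faces, using Proposition~\ref{covering-relation}. From Lemma~\ref{faces-regions} together with the bijection described in the paragraph preceding Example~\ref{eg-regions-bottoms}, we already know that every face $\sigma$ of $\mathcal{P}(M[P,Q])$ corresponds to a unique block-tiled bottom $\phi(\sigma)$ inside $[P,Q]$. What remains is to show that $\dim(\sigma)$ equals the number of blocks $|\phi(\sigma)|$.

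For the base case, take $\sigma = \mathcal{P}(M[P,Q])$ itself, which has dimension $m+r-1$ by Corollary~\ref{dimension-of-matroid-polytope} since $[P,Q]$ is connected (so $k=2$). Since no matroid construction has been applied, the associated maximal block-tiled region tiles the entire $[P,Q]$ with single-cell blocks; passing to the block-tiled bottom yields the lowest border strip from $(0,0)$ to $(m,r)$ inside $[P,Q]$, tiled by single cells. A direct combinatorial count---by induction on $m+r$ or by matching each non-initial step of its lower bounding path to a distinct cell---shows that such a border strip contains exactly $m+r-1$ cells, hence $m+r-1$ single-cell blocks, matching the dimension.

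For the inductive step, suppose every $k$-dimensional face corresponds to a block-tiled bottom with $k$ blocks for all $k \geq n$. Any $(n-1)$-dimensional face $\tau$ is a codimension-one subface of some $n$-dimensional face $\sigma$, because the face poset of a polytope is graded. By Proposition~\ref{covering-relation}, the covering $\tau \lessdot \sigma$ falls into one of three cases, each modifying $\phi(\sigma)$ in a way that decreases the block count by exactly one: direct sum at an outside corner removes one block (case~(1)), deletion of a boundary block removes one block (case~(2)), and merging two adjacent blocks along a shared step reduces the count by one (case~(3)). Hence $|\phi(\tau)| = |\phi(\sigma)| - 1 = n-1$, completing the induction. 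The chief technical obstacle is the base case---verifying that the lowest border strip inside an arbitrary connected skew shape has exactly $m+r-1$ cells---while the inductive step is essentially bookkeeping given Proposition~\ref{covering-relation}.
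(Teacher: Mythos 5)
Your proposal is correct and follows essentially the same route as the paper: the paper's proof also anchors the base case in the full polytope having dimension $m+r-1$ and a block-tiled bottom of $m+r-1$ single-box blocks, and then descends through the face poset using the fact that each covering relation in Proposition~\ref{covering-relation} reduces the block count by exactly one. Your added remarks (gradedness of the face poset, the cell count of the lowest border strip) only make explicit what the paper leaves implicit.
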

   
   \begin{proof}
      By Proposition~\ref{dimension-of-matroid-polytope}, 
      the dimension of a lattice path matroid polytope $\mathcal{P}(M[P,Q])$ having $m+r-1$ single-boxed-blocks 
      in the block-tiled bottom of $[P,Q]$ is $m+r-1$.
      The result follows since 
      each covering relation in the face poset of $\mathcal{P}(M[P,Q])$ reduces 
      the number of blocks in the block-tiled bottom by one. 
   \end{proof}
   
   In the following proposition, we give a more explicit proof for a special case of Corollary~\ref{faces-using-border-strips}.
  
   \begin{proposition}
   All the edges of the polytope $\mathcal{P}(M[P,Q])$ are corresponding to 
   the block-tiled bottoms with $1$ block inside the region $[P,Q]$. 
   \label{1-block}
   \end{proposition}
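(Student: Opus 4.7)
The plan is to construct an explicit bijection between the edges of $\mathcal{P}(M[P,Q])$ and the $1$-block block-tiled bottoms inside $[P,Q]$. By Theorem~\ref{thm-matroid-polytopes}, the edges of a matroid base polytope correspond to pairs of bases $\{B, B'\}$ differing by a single basis exchange, namely $B' = (B \setminus \{\alpha\}) \cup \{\beta\}$ for some $\alpha \in B$ and $\beta \notin B$, and I assume throughout that $\alpha < \beta$.

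For the forward direction I would analyze the associated lattice paths $P(B)$ and $P(B')$. Since $B$ and $B'$ agree outside $\{\alpha, \beta\}$, these two paths agree in every step except step $\alpha$ (north in $P(B)$, east in $P(B')$) and step $\beta$ (with the directions reversed). Counting the cumulative numbers of north steps shows that $P(B)$ sits exactly one unit northwest of $P(B')$ for $\alpha \le i < \beta$ and coincides with $P(B')$ elsewhere, so the skew region $[P(B'), P(B)]$ is at most one unit wide everywhere and contains no $2\times 2$ square; that is, it is a single border strip. Taking $\lambda = P(B')$, $\mu = P(B)$, and $\tau$ equal to this one block therefore yields a $1$-block block-tiled region, whose maximal continuous intersections (the portions of the common path before position $\alpha$ and after position $\beta$) pass through the endpoints $(0,0)$ or $(m,r)$ as required. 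Since the outside paths together with the enclosed skew region pin down the block completely, this $1$-block band is the only element of its family and is therefore automatically a block-tiled bottom.

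For the reverse direction, let $[\lambda, \nu]_\tau$ be a $1$-block bottom with block $\mathcal{B}$. Since $\lambda = \nu$ outside $\mathcal{B}$, the associated bases $B_\lambda$ and $B_\nu$ can differ only at positions corresponding to steps within $\mathcal{B}$. The main obstacle is the following key combinatorial lemma: the lower and upper bounding paths of a border strip with $k$ boxes have the form $E \cdot \sigma \cdot N$ and $N \cdot \sigma \cdot E$ respectively, where $\sigma$ is a common word of length $k-1$ recording the direction (east or north) of each transition from one box of the strip to the next. I would prove this by induction on $k$: the base case $k=1$ is immediate ($EN$ versus $NE$), and for the inductive step I would remove the first box from the strip and apply the hypothesis, using the no-$2\times 2$-square property to show that the reinserted first box prepends a common letter to $\sigma$ in both paths. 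Granting this lemma, $\lambda$ and $\nu$ differ at exactly the starting label $\alpha$ and the ending label $\beta$ of $\mathcal{B}$, so $B_\nu = (B_\lambda \setminus \{\beta\}) \cup \{\alpha\}$ and $(B_\lambda, B_\nu)$ is an edge. The forward and reverse maps are mutually inverse, since each reconstructs the pair of bounding paths, and hence the pair of bases, from the block together with its outside paths.
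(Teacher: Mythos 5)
Your proposal is correct, and its first half is essentially the paper's argument: the paper's proof also starts from Theorem~\ref{thm-matroid-polytopes}, writes the edge as a single exchange at positions $j<k$, and observes that $P(B)$ and $P(B')$ coincide outside $[j,k]$ and stay exactly one unit apart in between, so the enclosed region is a connected skew shape with no $2\times 2$ square, i.e.\ a single block. Where you genuinely go beyond the paper is in the second half. The published proof stops at ``there is a unique block generated by $P(B)$ and $P(B')$'' and leaves the converse direction, the verification that the resulting $1$-block band is a block-tiled \emph{bottom}, and the mutual-inverse check entirely implicit. You supply all three: the key added ingredient is your lemma that the lower and upper bounding words of a $k$-box border strip are $E\cdot\sigma\cdot N$ and $N\cdot\sigma\cdot E$ for a common word $\sigma$, which immediately shows that $\lambda$ and $\nu$ differ only at the first and last labels of the block and hence encode bases related by a single exchange (and thus an edge, by the basis-exchange characterization of edges quoted after Theorem~\ref{thm-matroid-polytopes}). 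Your observation that a $1$-block band is automatically the bottom of its family -- because the two maximal continuous intersections determine the endpoints of the block, so no clone can be substituted without changing the intersections -- is also a point the paper silently assumes. The trade-off is simply completeness versus brevity: your route proves the stated one-to-one correspondence in both directions, while the paper's proof, read literally, only establishes the map from edges to blocks.
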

  
   \begin{proof}
   Take an edge $e_{B} e_{B'}$ of the polytope $\mathcal{P}(M[P,Q])$ where
   \begin{align*}
      e_{B} = e_{b_1} + \cdots + e_{b_r} &= (x_1, x_2, \ldots, x_n) \in \DD_n ~\text{and}\\   
      e_{B'} = e_{b'_1} + \cdots + e_{b'_r} &= (y_1, y_2, \ldots, y_n) \in \DD_n 
   \end{align*}
   for  $(n-1)$-simplex $\DD_n = \{ (x_1, \dots, x_n) \in \reals^n : x_1 \ge 0, \dots, x_n \ge 0, x_1 + \cdots + x_n = r \}$.
   By Theorem~\ref{thm-matroid-polytopes}, without loss of generality, 
   there exist  $j$ and $k$ ($1 \leq j < k \leq n$) such that
   $$\left( \begin{matrix} x_j&x_k\\ y_j&y_k \end{matrix} \right) = \left( \begin{matrix} 1&0\\ 0&1 \end{matrix} \right),$$ 
   and $x_i = y_i$ ($1 \leq i \leq n$) for the other coordinates.   
   
   Because the associated lattice path $P(B)$ is identical with $P(B')$ before the $j$th step 
   and $\binom{x_j}{y_j} = \binom{1}{0}$, 
   we have a starting point of a region $R_1$ at the $j$th step. 
   Similarly, we have an ending point of a region $R_2$ at the $k$th step. 
   Since $P(B)$ and $P(B')$ also have identical sequences between $j$th step and $k$th step 
   after being separated only by $1$ step at $j$th step, 
   $P(B)$ and $P(B')$ do not intersect between $j$th step and $k$th step, and 
   $2 \times 2$ squares cannot be contained between $P(B)$ and $P(B')$. 
   Hence, $R_1$ and $R_2$ are the same skew-shaped region without $2 \times 2$ squares. 
   Therefore, there is a unique block generated by $P(B)$ and $P(B')$ inside $[P,Q]$.
   \end{proof}

   Consider the lattice path matroid polytope $\mathcal{P}(M[P,Q])$ and a lattice path~$L$ inside the region $[P,Q]$.
   We define $L'= L'(P,L)$ as the lattice path such that
   $L'$ passes all the intersection points of $P$ and $L$ and,
   for each maximal connected region between $P$ and $L$ with the starting point $(x,y)$ and the ending point $(x+a,y+b)$,
   $L'$ has a sequence $E^aN^b$ from $(x,y)$ to $(x+a,y+b)$.   
   
   \begin{corollary} 
   For the lattice path matroid polytope $\mathcal{P}(M[P,Q])$,
   the number of edges of this polytope is equal to the sum of the areas between $L$ and $L'$
   where the sum is over all lattice paths $L$ from $(0,0)$ to $(m,r)$ inside the region $[P,Q]$.
   \label{cor-edge-area}
   \end{corollary}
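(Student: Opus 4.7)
The plan is to prove, for each lattice path $L$ from $(0,0)$ to $(m,r)$ inside $[P,Q]$, the identity $\mathrm{area}(L,L')$ equals the number of edges of $\mathcal{P}(M[P,Q])$ whose upper associated path is $L$. Summing this identity over $L$ then yields the corollary, since every edge has exactly one upper associated path.

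First I would use Theorem~\ref{thm-matroid-polytopes} (and the basis-exchange discussion following it) to describe each edge as an unordered pair of bases $(B,B')$ with $B' = (B \setminus \{\alpha\}) \cup \{\beta\}$. Translating via Theorem~\ref{thm-bases-of-lattice-path-matroids} into the associated lattice paths $\lambda = P(B)$ and $L = P(B')$ with $\lambda \le L$, this reduces to choosing positions $y < x$ with $L_y = N$ and $L_x = E$, and letting $\lambda$ be the swap of $L$ at these two positions. Writing $L(i)$ and $P(i)$ for the respective $N$-counts after the first $i$ steps, $\lambda \le L \le Q$ is automatic, so $\lambda$ stays in $[P,Q]$ exactly when $L(i) > P(i)$ for every $i \in [y, x-1]$. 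Hence the edges with upper path $L$ are in bijection with such valid pairs $(y,x)$.

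Next I would partition $[1, m+r]$ into the maximal intervals $[s_k^-, s_k^+ - 1]$ on which $L > P$ strictly; these intervals coincide precisely with the maximal connected regions between $P$ and $L$ used in the definition of $L'$. A valid exchange $(y,x)$ must have $[y, x-1]$ contained in a single such strict-gap region (with $x = s_k^+$ allowed, the first step on which $L$ and $P$ meet again). Inside the $k$-th region, of dimensions $a_k \times b_k$, the restriction of $L$ consists of $a_k$ East steps and $b_k$ North steps, and the valid pairs $(y,x)$ inside are exactly the $NE$-pairs of $L$'s restriction; a standard bijective argument identifies the number of such $NE$-pairs with the area under $L$ in its $a_k \times b_k$ bounding box.

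By definition $L'$ equals $E^{a_k} N^{b_k}$ in this region, the staircase hugging the south-eastern corner, so the area between $L$ and $L'$ inside the region is exactly the area under $L$ in the box. Summing over regions gives $\#\{\text{valid exchanges from } L\} = \mathrm{area}(L, L')$, and summing over $L$ yields the corollary. The main obstacle will be the bookkeeping identifying the strict-gap intervals of $L$ versus $P$ with the maximal connected regions in the definition of $L'$, and carefully treating the boundary case $x = s_k^+$ (where $x$ lies on the closing step of the region rather than strictly inside it); the $NE$-pair/area identity within each region is then routine.
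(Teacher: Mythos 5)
Your argument is correct, and it reaches the count by a genuinely different route from the paper. The paper derives Corollary~\ref{cor-edge-area} from Proposition~\ref{1-block} (edges correspond to block-tiled bottoms with a single block, which in turn rests on the face/block-tiled-bottom dictionary of Lemma~\ref{faces-regions}), and then exhibits an explicit bijection between one-block bottoms and pairs consisting of a lattice path $L$ and a unit box inside $[L',L]$. You instead work directly from the adjacency criterion following Theorem~\ref{thm-matroid-polytopes}: an edge is a single basis exchange, hence a pair of nested paths $\lambda \le L$ in $[P,Q]$ differing by one $N$/$E$ swap at positions $y<x$, admissible exactly when $L$ stays strictly above $P$ on $[y,x-1]$; you then count these swaps for fixed upper path $L$ via the classical identity between $NE$-inversions of a word and the area under the corresponding path, applied on each maximal region where $L$ lies strictly above $P$. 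The underlying combinatorics is the same --- both proofs ultimately match each edge with a unit cell between $L$ and $L'$ --- but your version is self-contained (it uses nothing from Section~5 beyond the definition of $L'$) and makes the per-$L$ refinement of the edge count explicit, whereas the paper's version situates the statement inside its general block-tiled-bottom description of all faces. The two points you flag as needing care (identifying the strict-gap intervals of $L$ over $P$ with the maximal connected regions defining $L'$, and admitting the closing East step $x=s_k^+$ of each region) are exactly the right ones, and both check out.
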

  
   \begin{proof}
   For a lattice path $L$ from $(0,0)$ to $(m,r)$ inside the region $[P,Q]$
   and a unit box $U$ with the starting point $(u_1,u_2)$ inside the region $[L',L]$,
   one can construct the block-tiled bottom $[\lambda, L]_\tau$ inside $[P,Q]$ 
   such that $[\lambda, L]_\tau$ has only one block with the starting point $(*,u_2)$ and the ending point $(u_1+1,*)$ on $L$,
   and $\lambda$ is identical with $L$ before $(*,u_2)$ and after $(u_1+1,*)$.   
   If a block-tiled bottom $[\lambda, \nu]_\tau$ with $1$ block inside $[P,Q]$ is given,
   one can take a lattice path $\nu$ and a unit box with the starting point $(u_1-1, u_2)$
   where $u_1$ is the $x$-coordinate of the ending point and $u_2$ is the $y$-coordinate of the starting point of the block. 
   
   Hence, there is a one to one correspondence 
   between block-tiled bottoms with $1$ block inside $[P,Q]$ and 
   the pairs of a lattice path $L$ from $(0,0)$ to $(m,r)$ inside $[P,Q]$ and a unit box inside $[L',L]$.
   By Proposition~\ref{1-block}, 
   the number of edges of the polytope $\mathcal{P}(M[P,Q])$ is equal to the sum of areas between $L$ and $L'$
   with summation over all lattice paths $L$ from $(0,0)$ to $(m,r)$ inside the region $[P,Q]$.
   \end{proof}     

   Corollary~\ref{cor-edge-area} is a nice generalization of the following result~\cite[Lemma 3.6]{Bidkhori}.
   Note that $L'=P$ in the case $P$ is the lattice path $E^mN^r$.
         
   \begin{corollary} 
   \label{1-box}
   Consider the lattice path matroid polytope $\mathcal{P}(M[E^mN^r,Q])$ 
   where $Q$ is a lattice path from $(0,0)$ to $(m,r)$. 
   The number of edges of this polytope is equal to the sum of areas below the path $L$
   with summation over all lattice paths $L$ from $(0,0)$ to $(m,r)$ inside the region $[E^mN^r,Q]$.
   \end{corollary}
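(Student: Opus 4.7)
The plan is to deduce Corollary~\ref{1-box} as an immediate specialization of Corollary~\ref{cor-edge-area}. The only thing that needs verification is that, in the particular case $P = E^m N^r$, the auxiliary path $L' = L'(P,L)$ coincides with $P$ itself for every lattice path $L$ from $(0,0)$ to $(m,r)$ inside $[E^m N^r, Q]$. Once this is established, the area between $L$ and $L'$ becomes the area between $L$ and the ``staircase'' $E^m N^r$, which is precisely the area below $L$ inside the region, and the corollary follows.

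To prove $L' = P$ when $P = E^m N^r$, I would unpack the definition of $L'$ on each maximal connected region between $P$ and $L$. Such a region has a starting point $(x,y)$ and an ending point $(x+a, y+b)$, both lying on $P$. Because every point of $P = E^m N^r$ satisfies either $y = 0$ or $x = m$, the portion of $P$ from $(x,y)$ to $(x+a, y+b)$ is itself the corner path $E^a N^b$: if both points lie on the bottom edge then $y = y+b = 0$ and the portion is $E^a$; if both lie on the right edge then $x = x+a = m$ and the portion is $N^b$; and if $(x,y)$ is on the bottom while $(x+a, y+b)$ is on the right then $x+a = m$ and the portion is $E^{m-x} N^b = E^a N^b$. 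In each case the prescribed $E^a N^b$ segment of $L'$ agrees with $P$. Since $L'$ is also required to pass through every intersection point of $P$ and $L$, where it trivially agrees with $P$, we obtain $L' = P$ on all of $[0,m]\times[0,r]$.

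With $L' = P = E^m N^r$ in hand, the area between $L$ and $L'$ equals the area below $L$ inside the region $[E^m N^r, Q]$. Substituting this into the formula of Corollary~\ref{cor-edge-area} gives the desired expression for the number of edges of $\mathcal{P}(M[E^m N^r, Q])$ as the sum of the areas below $L$ over all lattice paths $L$ from $(0,0)$ to $(m,r)$ inside the region.

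The step that requires the most care is the identification $L' = P$, and in particular handling the degenerate cases in which $L$ partially coincides with $P$ (so that several maximal connected regions arise separated by shared arcs). Since $L'$ is forced to coincide with $P$ on each shared arc and on each ``bubble'' by the argument above, no real obstacle appears; all remaining steps are purely bookkeeping on the areas.
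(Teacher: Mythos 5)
Your proposal is correct and follows exactly the route the paper intends: the paper derives Corollary~\ref{1-box} from Corollary~\ref{cor-edge-area} via the single observation that $L'=P$ when $P=E^mN^r$, which you verify in detail. Your careful case analysis of the maximal connected regions is a fuller justification of that observation than the paper's one-line remark, but it is the same argument.
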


\section{Future works}
\label{sec-future-works}

   The $\C\D$-index $\Psi (\mathcal{P})$ of a polytope $\mathcal{P}$, 
   a polynomial in the noncommutative variables $\C$ and $\D$, 
   is a very compact encoding of the flag numbers of a polytope $\mathcal{P}$~\cite{BayerKlapper}.
   The third author shows how the $\C\D$-index of a polytope can be expressed 
   when a polytope is cut by a hyperplane~\cite{SangwookFlag}.
   For lattice path matroids of rank $2$, the following is obtained from~\cite{SangwookFlag}.
   
   \begin{proposition}
      For lattice paths $P=E^{\alpha+\beta} N E^{\gamma} N$ and 
      $Q=NE^{\alpha} NE^{\beta+\gamma}$
      with $\alpha+\beta+\gamma = m$,
      the $\C\D$-index of the lattice path matroid polytope $\mathcal{P}(M[P,Q])$ is 
      {\footnotesize
      \begin{multline*}
      \Psi(\mathcal{P}(M[P,Q])) 
      = \sum_{i = \alpha+1}^{\alpha+\beta} \Psi(\mathcal{P}(M_i))
      - \left( \sum_{i=\alpha+2}^{\alpha+\beta} \Psi(\DD_i \times \DD_{m-i+2}) \right) \cdot \C \\
      - \hspace{-3mm} \sum_{(0,0) < (i,j) \le (\alpha, \gamma)} 
                           \hspace{-2mm} 
                          \binom{\alpha+1}{i} \binom{\gamma+1}{j}
                           \hspace{-1mm} 
                            \left( \sum_{k=2}^{\beta-2} 
                                   \Psi(\DD_{\alpha-i+k} \times \DD_{\beta-j+\gamma-k+2}) \right)
                            \cdot \D \cdot \Psi(\DD_{i+j}),
      \end{multline*}
      }
      where $M_i = M[E^i N E^{m-i} N, N E^{i-1} N E^{m-i+1}]$ and 
      $\DD_j$ is $(j-1)$-dimensional simplex.
   \end{proposition}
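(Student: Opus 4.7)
The plan is to derive the formula by iterating the hyperplane-cut formula for the $\C\D$-index developed in~\cite{SangwookFlag}. I first need to exhibit a clean geometric decomposition of $\mathcal{P}(M[P,Q])$ into the pieces $\mathcal{P}(M_i)$. Using the interval presentation $N_1 = [1, \alpha+\beta+1]$ and $N_2 = [\alpha+2, m+2]$ for $M[P,Q]$, a basis is a pair $\{b_1, b_2\}$ with $b_1 \le \alpha+\beta+1$ and $b_2 \ge \alpha+2$; a direct check shows that such a pair is a basis of $M_i$ (which has presentation $N_1' = [1,i+1]$, $N_2' = [i+1,m+2]$) precisely when $b_1 - 1 \le i \le b_2 - 1$. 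Consequently,
\[
\mathcal{P}(M[P,Q]) \;=\; \bigcup_{i=\alpha+1}^{\alpha+\beta} \mathcal{P}(M_i),
\]
and two consecutive pieces $\mathcal{P}(M_{i-1})$ and $\mathcal{P}(M_i)$ share the common face corresponding to the direct sum of uniform matroids, whose polytope is isomorphic to $\DD_i \times \DD_{m-i+2}$ and sits inside the hyperplane $H_i:\ x_1 + \cdots + x_i = 1$.

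With this decomposition in hand, I would apply the one-cut identity of~\cite{SangwookFlag} iteratively at the $\beta - 1$ separating hyperplanes $H_{\alpha+2}, \dots, H_{\alpha+\beta}$. A single application, schematically, gives
\[
\Psi(\mathcal{P}(M[P,Q])) \;=\; \Psi(\mathcal{P}_{<i}) + \Psi(\mathcal{P}_i) \;-\; \Psi(\DD_i \times \DD_{m-i+2}) \cdot \C \;+\; (\D\text{-corrections}),
\]
so telescoping these contributions across $\beta-1$ applications yields the first sum $\sum_i \Psi(\mathcal{P}(M_i))$ and the second sum $\sum_i \Psi(\DD_i \times \DD_{m-i+2}) \cdot \C$ in the statement. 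Checking that the intermediate polytopes $\mathcal{P}_{<i}$ admit genuine hyperplane cuts (rather than merely set-theoretic unions) is a small verification using the ``overlap-one'' structure of $M_i$ that was analyzed for border strips in Section~\ref{sec-border strips}.

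The main technical obstacle is unpacking the $\D$-corrections. In the framework of~\cite{SangwookFlag}, these corrections encode flag contributions of faces of $\mathcal{P}(M[P,Q])$ that meet the cutting hyperplane in codimension at least two; combinatorially, such a face corresponds, via the block-tiled bottom description of Section~\ref{sec-skew-shape}, to choosing $i$ positions among the $\alpha+1$ east-steps preceding the overlap region and $j$ positions among the $\gamma+1$ east-steps following it (whence the coefficient $\binom{\alpha+1}{i}\binom{\gamma+1}{j}$), together with a cut parameter $k \in \{2,\dots,\beta-2\}$ specifying which internal hyperplane $H_{\alpha+k}$ further refines the face. The face itself is a product of simplices $\DD_{\alpha-i+k} \times \DD_{\beta-j+\gamma-k+2}$, and the subface through which the cut passes contributes the factor $\Psi(\DD_{i+j})$, multiplied by $\D$ as dictated by the second-order part of the hyperplane-cut formula. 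The hard part will be verifying that summing these contributions across all $\beta-1$ cuts, by a careful inclusion-exclusion on the intersection pattern of $H_{\alpha+2},\dots,H_{\alpha+\beta}$ with the face lattice, reproduces exactly the triple sum in the statement with no double counting and no missed term.
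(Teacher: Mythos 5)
The paper offers no argument for this proposition beyond the sentence that it ``is obtained from~\cite{SangwookFlag},'' so your overall strategy --- decompose $\mathcal{P}(M[P,Q])$ into the pieces $\mathcal{P}(M_i)$ and iterate the hyperplane-cut formula for the $\C\D$-index over the $\beta-1$ separating hyperplanes $H_i\colon x_1+\cdots+x_i=1$ --- is surely the intended route. Your setup is correct: the interval presentations, the fact that a basis $\{b_1,b_2\}$ of $M[P,Q]$ is a basis of $M_i$ exactly when $b_1-1\le i\le b_2-1$, the resulting covering $\mathcal{P}(M[P,Q])=\bigcup_{i=\alpha+1}^{\alpha+\beta}\mathcal{P}(M_i)$, and the common facets $\mathcal{P}(M_{i-1})\cap\mathcal{P}(M_i)\cong\DD_i\times\DD_{m-i+2}$ sitting in $H_i$. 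This cleanly accounts for the first sum and for the $\beta-1$ terms carrying $\C$.

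There is, however, a genuine gap: essentially all of the content of the proposition sits in the triple sum of $\D$-corrections, and you do not compute it --- you describe what its terms ``should'' correspond to and then explicitly defer the verification as ``the hard part.'' Concretely, you never (i) determine, at each stage of the iteration, which faces of the already-truncated polytope $\mathcal{P}(M[P,Q])\cap\{x_1+\cdots+x_j\le 1\}$ are properly cut by $H_{j+1}$, nor how the earlier truncations affect the section $F\cap H_{j+1}$ and the face figure that enter the $\D$-term (new faces lying inside $H_j$ satisfy $x_1+\cdots+x_{j+1}\ge 1$ throughout and so are never properly cut, but truncated faces $F\cap\{x_1+\cdots+x_j\le 1\}$ can be, and this must be tracked); (ii) derive the coefficients $\binom{\alpha+1}{i}\binom{\gamma+1}{j}$ from an actual enumeration of such faces; or (iii) explain why $k$ runs only over $2\le k\le\beta-2$, i.e.\ $\beta-3$ values, when there are $\beta-1$ cuts, so the naive ``one family of $\D$-terms per cut'' picture cannot be correct without showing that the extreme cuts contribute nothing. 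You also misassign the roles of the two factors: in the cut formula the correction term has the shape $\Psi(F\cap H)\cdot\D\cdot\Psi(\mathcal{P}/F)$, so $\DD_{\alpha-i+k}\times\DD_{\beta-j+\gamma-k+2}$ must be the hyperplane section of a properly cut codimension-$(i+j)$ face $F$ and $\DD_{i+j}$ its face figure (of dimension $i+j-1$), not ``the subface through which the cut passes.'' Until the triple sum is actually derived, your argument establishes only the easy part of the stated formula.
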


   It is known that a hyperplane split of a lattice path matroid polytope $\mathcal{P}(M[P,Q])$
   can occur when $[P,Q]$ contains a $2 \times 2$ square~\cite{Bidkhori}. 
   Using descriptions of faces of a lattice path matroid polytope appeared in 
   Sections~\ref{sec-border strips} and \ref{sec-skew-shape},
   we would like to understand the $\C\D$-index of a lattice path matroid polytope of rank greater than $2$.

\bibliographystyle{plain}

\def\cprime{$'$}

\end{document}